\def\rr{{\mathbb R}}
\def\rn{{\mathbb{R}^n}}
\def\nrr{{\mathbb R}^{n-1}}
\def\zz{{\mathbb Z}}
\def\cc{{\mathbb C}}
\def\nn{{\mathbb N}}
\def\cl{{\mathcal L}}
\def\cm{{\mathcal M}}
\def\cp{{\mathcal P}}
\def\cq{{\mathcal Q}}
\def\cs{{\mathcal S}}
\def\fz{\infty }
\def\lz{\lambda}
\def\vz{\varphi}
\def\vez{\varepsilon}
\def\lf{\left}
\def\r{\right}
\def\la{\langle}
\def\ra{\rangle}
\def\hs{\hspace{0.25cm}}
\def\ls{\lesssim}
\def\gs{\gtrsim}
\def\ov{\overline}
\def\noz{\nonumber}
\def\wz{\widetilde}
\def\wh{\widehat}
\def\gfz{\genfrac{}{}{0pt}{}}
\def\loc{{\mathop\mathrm{\,loc\,}}}
\def\supp{\mathop\mathrm{\,supp\,}}
\def\vlp{{L^{p(\cdot)}(\rn)}}
\def\tlve{f_{p(\cdot),q(\cdot)}^{s(\cdot),\phi}(\rn)}
\def\btlve{F_{p(\cdot),q(\cdot)}^{s(\cdot),\phi}(\rn)}
\newtheorem{theorem}{Theorem}[section]
\newtheorem{lemma}[theorem]{Lemma}
\newtheorem{proposition}[theorem]{Proposition}
\newtheorem{corollary}[theorem]{Corollary}
\theoremstyle{definition}
\newtheorem{definition}[theorem]{Definition}
\theoremstyle{remark}
\newtheorem{remark}[theorem]{Remark}
\numberwithin{equation}{section}
\begin{document}

\arraycolsep=1pt

\setcounter{page}{1}

\title[Triebel-Lizorkin-Type Spaces]{{\vspace{-3cm}\small\hfill\bf Banach J. Math. Anal. (to appear)}\\
\vspace{2cm}Triebel-Lizorkin-Type
Spaces with Variable Exponents}

\author[D. Yang, C. Zhuo, W. Yuan]{Dachun Yang$^1$,
Ciqiang Zhuo$^1$ and Wen Yuan$^1$$^{*}$}

\address{$^{1}$ School of Mathematical Sciences, Beijing Normal University,
Laboratory of Mathematics and Complex Systems, Ministry of
Education, Beijing 100875, People's Republic of China.}
\email{\textcolor[rgb]{0.00,0.00,0.84}{dcyang@bnu.edu.cn}}
\email{\textcolor[rgb]{0.00,0.00,0.84}{cqzhuo@mail.bnu.edu.cn}}
\email{\textcolor[rgb]{0.00,0.00,0.84}{wenyuan@bnu.edu.cn}}


\subjclass[2010]{Primary 46E35; Secondary 42B25, 42B35.}

\keywords{Triebel-Lizorkin space, variable exponent,
molecule, atom, trace.}

\date{Received: Sep. 21, 2014; Accepted: Jan. 4, 2015.
\newline \indent $^{*}$ Corresponding author}

\begin{abstract}
In this article, the authors first introduce the Triebel-Lizorkin-type
space $F_{p(\cdot),q(\cdot)}^{s(\cdot),\phi}(\mathbb R^n)$
with variable exponents, and establish its $\varphi$-transform characterization
in the sense of Frazier and Jawerth, which further
implies that this new scale of function spaces is well defined.
The smooth molecular and the smooth atomic characterizations of
$F_{p(\cdot),q(\cdot)}^{s(\cdot),\phi}(\mathbb R^n)$ are also obtained,
which are used to prove a trace theorem of
$F_{p(\cdot),q(\cdot)}^{s(\cdot),\phi}(\mathbb R^n)$.
The authors also characterize the space
$F_{p(\cdot),q(\cdot)}^{s(\cdot),\phi}(\mathbb R^n)$
via Peetre maximal functions.
\end{abstract}

\maketitle

\section{Introduction\label{s1}}

Between 1960's and 1970's,
the Besov space $B_{p,q}^s(\rn)$ and the Triebel-Lizorkin
space $F_{p,q}^s(\rn)$
were introduced and investigated accompanying with the development of
the theory of function spaces (see, for example, \cite{t83}).
These spaces form a very general unifying scale of many
well-known classical concrete function spaces such as
Lebesgue spaces, H\"older-Zygmund spaces, Sobolev spaces,
Bessel-potential spaces, Hardy spaces
and BMO, which have their own history.
A comprehensive treatment of these
function spaces and their history can be founded in
Triebel's monographes \cite{t83,t92,t95,t06}.
Recently, to clarify the relations among Besov spaces,
Triebel-Lizorkin spaces and $Q$ spaces (see \cite{dx04,ejpx}),
Besov-type spaces $B_{p,q}^{s,\tau}(\rn)$ and Triebel-Lizorkin-type spaces
$F_{p,q}^{s,\tau}(\rn)$ and their homogeneous counterparts
for all admissible parameters were introduced and studied in
\cite{yyjfa,yymz10,ysiy}. Moreover, the Besov-type and the Triebel-Lizorkin-type
spaces, including some of their special cases related to $Q$ spaces,
have been used to study the existence and the regularity of
solutions of some partial differential equations such as (fractional)
Navier-Stokes equations; see, for example,
\cite{lxy,lzjmaa,lzjfa,t13,ysy13}.
For more properties of these spaces, we refer the reader to
\cite{sickel1,sickel2,yyaa13,yyz13}.

On the other hand, in recent years, there has been a growing interesting
in generalizing classical spaces such as Lebesgue and Sobolev spaces to
cases with either variable integrability or variable smoothness
(see \cite{cfbook,dhr11}),
which are obviously not covered by any function space with invariable exponents.
Spaces of variable integrability can be
traced back to Birnbaum-Orlicz \cite{bo31}, Orlicz \cite{ol32}
and Nakano \cite{nak50,nak51}. In particular, the definition of so-called
Musielak-Orlicz spaces was clearly written by Nakano in \cite[Section 89]{nak50},
while it seems that Orlicz was mainly interested in the completeness of function
spaces. But the modern development was started with
the article \cite{kr91} of Kov\'a\v{c}ik and R\'akosn\'{\i}k in 1991
and widely used in the study of harmonic analysis as well as partial
differential equations; see, for example,
\cite{ai00,cdh11,cfbook,cfmp06,din04,dhhm09,dhr11,drm03,io02,mw08}.
The motivation to study such function spaces also
comes from applications to fluid dynamic, image processing
and the calculus of variation; see, for example,
\cite{am02,aemg02,am05,clr06,drm03,fan07,rm00,rm04}.

To complete the theory of the variable exponent Lebesgue and Sobolev spaces,
Almeida and Samko \cite{as06} and Gurka et al. \cite{ghn07}
introduced and investigated variable exponent Bessel potential
spaces $\cl^{\alpha,p(\cdot)}$ with
variable integrability index $p(\cdot)$.
 Later, Xu \cite{Xu081,Xu082,Xu09}
studied Besov spaces $B_{p(\cdot),q}^{s}(\rn)$ and Triebel-Lizorkin spaces
$F_{p(\cdot),q}^{s}(\rn)$ with the variable exponent $p(\cdot)$
but invariable exponents $q$ and $s$. Along a different line of study, when Leopold
\cite{leop891,leop892,leop91,leop99} and Leopold and Schrohe \cite{leops96}
studied pseudo-differential operators with symbols of the type
$$(1+|\xi|^2)^{s(x)/2},$$
they defined and investigated related Besov spaces
with variable smoothness, $B_{p,p}^{s(\cdot)}(\rn)$. Function spaces
of variable smoothness including Besov space $B_{p,q}^{s(\cdot)}(\rn)$
and Triebel-Lizorkin space $F_{p,q}^{s(\cdot)}(\rn)$ have been studied by Besov
\cite{besov99,besov03,besov05}, which was a generalization of Leopold's work.
Another interesting research direction of function spaces
with variable integrability is the theory of Hardy spaces $H^{p(\cdot)}(\rn)$
with variable exponents as well as local Hardy spaces $h^{p(\cdot)}(\rn)$,
which was introduced and investigated by Nakai and Sawano \cite{ns12} and
they proved that
$$h^{p(\cdot)}(\rn)=F_{p(\cdot),2}^{0}(\rn).$$
Independently, Cruz-Uribe and
Wang in \cite{cw13} also investigated the variable exponent Hardy space
with some weaker conditions than those used in \cite{ns12}.

As we can see from the trace and the embedding theorems
of the classical function spaces,
the smoothness and the integrability often interact each other; see, for example,
\cite[Theorem 6.8 and Corollary 2.2]{ysiy}.
As was pointed out in \cite[p.\,1629]{ah10} and \cite[p.\,1733]{dhr09},
the unifications of the trace and the Sobolev embedding do not occur on function
spaces with only one index variable. For example, the trace space of
the variable exponent Sobolev space $W^{k,p(\cdot)}$ is no longer a space of
the same type (see \cite{dhr11}), since they involve an interaction
between integrability
and smoothness. As one of motivations, to tackle this problem,
Alexandre and H\"ast\"o \cite{dhr09}
introduced and investigated
Triebel-Lizorkin spaces with variable smoothness and integrability
$F_{p(\cdot),q(\cdot)}^{s(\cdot)}(\rn)$ with $s(\cdot)\ge0$,
and showed that these spaces behaved nicely with
respect to the trace operator. Subsequently, Vyb\'iral \cite{vj09} established
Sobolev and Jawerth embeddings of these spaces and,
moreover, Kempka \cite{kempka09} characterized
$F_{p(\cdot),q(\cdot)}^{s(\cdot)}(\rn)$ by local means, and Kempka and
Vyb\'iral \cite{kv12} obtained the equivalent characterization via
ball means of differences.
The main difficulty of studying $F_{p(\cdot),q(\cdot)}^{s(\cdot)}(\rn)$
is the absence of the vector-valued inequality
for the boundedness on
$L^{p(\cdot)}(\ell^{q(\cdot)}(\rn))$ of the Hardy-Littlewood maximal function,
which, in the classical case with $p,\ q, s$ being constant exponents, is a very
important tool in studying the space $F_{p,q}^{s}(\rn)$.
The vector-valued convolution inequality developed in \cite[Theorem 3.2]{dhr09}
(see also Lemma \ref{l-estimate2} below)
supplies a well remedy for this absence.

Vyb\'iral \cite{vj09} and Kempka \cite{kempka09} also studied
the Besov space $B_{p(\cdot),q}^{s(\cdot)}(\rn)$ with the only index $q$ being
a constant, which is a quite natural case, since the norm in the Besov space
 is usually defined via the iterated space
$\ell^q(L^p(\rn))$. Furthermore, Almeida and H\"ast\"o \cite{ah10} introduced
and investigated the Besov space $B_{p(\cdot),q(\cdot)}^{s(\cdot)}(\rn)$
with all three variable exponents,
which makes a further step in completing the unification process of function
spaces with variable smoothness and integrability.
The atomic characterization of $B_{p(\cdot),q(\cdot)}^{s(\cdot)}(\rn)$
was established by Drihem \cite{dd12} and some
equivalent characterizations via local means and ball means of differences
were also obtained by Kempka and Vyb\'ral \cite{kv12}.
Moreover, Noi and Sawano \cite{nois12} investigated
the complex interpolation of Besov spaces and Triebel-Lizorkin
spaces with variable exponents (see also \cite{yyz13} for
the complex interpolation of Besov-type spaces and Triebel-Lizorkin-type
spaces but with invariable exponents) and, in \cite{noi14}, Noi studied the trace and the extension operators
for Beosv spaces and Triebel-Lizorkin spaces with variable exponents.
Very recently, Izuki et al. \cite{ins14} gave out an elementary
introduction to function spaces with variable exponents and
a survey of related function spaces.

More generally, Kempka \cite{kempka10} introduced and
studied 2-microlocal Besov and Triebel-Lizorkin
spaces with variable integrability and gave out characterizations
by decompositions in
atoms, molecules and wavelets, which cover the usual Besov and
Triebel-Lizorkin spaces
as well as spaces of variable smoothness and integrability and also include
the space $F_{p(\cdot),q(\cdot)}^{s(\cdot)}(\rn)$ without the restriction
$s(\cdot)\ge0$. The trace of 2-microlocal Besov and Triebel-Lizorkin
spaces with variable exponents was studied by Moura et al. \cite{mns13},
as well as Gon\c{c}alves et al. \cite{gmn14}.
Moreover, Ho \cite{ho12}
investigated the variable Triebel-Lizorkin-Morrey space, which is
an extension of Triebel-Lizorkin-Morrey spaces in \cite{saw08,{st07}}
and also generalizes the function
space $F_{p(\cdot),q(\cdot)}^{s(\cdot)}(\rn)$ in \cite{dhr09}.

Here, we should point out that, different from the classical case with
exponents being constants, the definition of
$B_{p(\cdot),q(\cdot)}^{s(\cdot)}(\rn)$
is more complicated than that of $F_{p(\cdot),q(\cdot)}^{s(\cdot)}(\rn)$.
The main reason is that the mixed Lebesgue-sequence space
$$\ell^{q(\cdot)}(L^{p(\cdot)}(\rn))$$ (see \cite[Definition 3.1]{ah10})
involved in the definition of $B_{p(\cdot),q(\cdot)}^{s(\cdot)}(\rn)$,
 as was pointed out by Almeida and H\"ast\"o
in \cite[Remark 4.2]{ah10}, does not enjoy one key feature of iterated
function spaces, namely, inheritance of properties from constituent spaces.
To limit the length of this article,
we leave the study of Besov-type spaces with all variable exponents
in a furthercoming article.

The purpose of this article is to introduce and study a more generalized scale of
function spaces, based on the Triebel-Lizorkin-type space
$F_{p,q}^{s,\tau}(\rn)$, with variable exponent of smoothness,
$s(\cdot)$, variable
exponents of integrability, $p(\cdot)$ and $q(\cdot)$, and a set function $\phi$,
denoted by $\btlve$. These spaces generalize classical Triebel-Lizorkin-type
spaces and Triebel-Lizorkin spaces with variable smoothness and integrability.
Molecular and atomic characterizations, Peetre maximal function characterizations
of these spaces are also established in this article. As applications,
we show a trace theorem of Triebel-Lizorkin-type spaces with variable exponents
and give out some equivalent quasi-norms under some restrictions of the
set function $\phi$.

We begin with some basic notation.
In what follows, for a measurable function
$p(\cdot):\ \rn\to(0,\fz)$ and a measurable set $E$ of $\rn$, let
$$p_-(E):=\mathop\mathrm{ess\,inf}_{x\in E}p(x)\quad \mathrm{and}\quad
p_+(E):=\mathop\mathrm{ess\,sup}_{x\in E}p(x).$$
For notational simplicity, we let $p_-:=p_-(\rn)$ and $p_+:=p_+(\rn)$.
Denote by $\cp(\rn)$ the \emph{collection of all measurable functions
$p(\cdot):\ \rn\to(0,\fz)$ satisfying $0<p_-\le p_+<\fz$}.

For $p(\cdot)\in\cp(\rn)$ and a measurable set $E\subset\rn$,
the \emph{space} $L^{p(\cdot)}(E)$
is defined to be the set of all measurable functions $f$ such that
$$\|f\|_{L^{p(\cdot)}(E)}:=\inf\lf\{\lz\in(0,\fz):\ \int_E\lf[\frac{|f(x)|}
{\lz}\r]^{p(x)}\,dx\le1\r\}<\fz.$$
For $r\in(0,\fz)$, denote by $L_{\rm loc}^r(\rn)$ the \emph{set} of all
$r$-locally integrable functions on $\rn$.
Denote by $L^\fz(\rn)$ the \emph{set} of all measurable functions $f$ such that
$$\|f\|_{L^\fz(\rn)}:=\mathop{\rm ess\,sup}\limits_{y\in \rn}|f(y)|<\fz.$$
\begin{remark}\label{r-vlpp}
Let $p(\cdot)\in\cp(\rn)$.

(i) It was presented in \cite[p.\,3671]{ns12}
(see also \cite[Theorem 2.17]{cfbook}) that,
for all $\lz\in\cc$,
$$\|\lz f\|_{\vlp}=|\lz|\|f\|_{\vlp}$$
and, if $r\in(0,\min\{p_-,1\}]$, then, for all $f,\ g\in\vlp$,
$$\|f+g\|_{\vlp}^r\le \|f\|_{\vlp}^r+\|g\|_{\vlp}^r.$$

(ii) If $$\int_\rn\lf[\frac{|f(x)|}{\delta}\r]^{p(x)}\,dx\le \wz C$$
for some $\delta\in(0,\fz)$ and some positive constant $\wz C$
independent of $\delta$,
 then it is easy to see that $\|f\|_{\vlp}\le C\delta$, where $C$ is
 a positive constant
independent of $\delta$, but depending on $p_-$ (or $p_+$) and $\wz C$.

(iii) Let $p(\cdot)\in\cp(\rn)$ satisfy $1<p_-\le p_+<\fz$.
Define the conjugate
exponent $\wz{P}(\cdot)$ of $p(\cdot)$ by setting,
for all $x\in\rn$, $\wz{P}(x):=\frac{p(x)}{p(x)-1}$.
 It was proved, in \cite[Theorem 2.6]{cfbook}, that,
if $f\in \vlp$ and $g\in L^{\wz{P}(\cdot)}(\rn)$, then $fg\in L^1(\rn)$
and
$$\int_\rn|f(x)g(x)|\,dx\le C\|f\|_{\vlp}\|g\|_{L^{\wz{P}(\cdot)}(\rn)},$$
where $C$ is a positive constant depending on $p_-$ or $p_+$,
 but independent of $f$
and $g$.

{\rm (iv)} Obviously, the space $\vlp$ has the \emph{lattice} property, namely,
if $|f|\le|g|$, then
$$\|f\|_{\vlp}\le\|g\|_{\vlp}.$$
\end{remark}
Recall that a measurable function $g\in\cp(\rn)$ is said to satisfy the
\emph{locally {\rm log}-H\"older continuous condition},
denoted by $g\in C_{\rm loc}^{\log}(\rn)$,
if there exists a positive constant $C_{\log}(g)$ such that,
for all $x,\ y\in\rn$,
\begin{equation}\label{ve1}
|g(x)-g(y)|\le \frac{C_{\log}(g)}{\log(e+1/|x-y|)},
\end{equation}
and $g$ is said to satisfy the
\emph{globally {\rm log}-H\"older continuous condition},
denoted by $g\in C^{\log}(\rn)$,
if $g\in C_{\rm loc}^{\log}(\rn)$ and there exist positive constants
$C_\fz$ and $g_\fz$
such that, for all $x\in\rn$,
\begin{equation*}
|g(x)-g_\fz|\le \frac{C_\fz}{\log(e+|x|)}.
\end{equation*}
\begin{remark}\label{r-log}
(i) Let $g\in C^{\log}(\rn)$. Then $g_\fz=\lim_{|x|\to\fz}p(x)$.

(ii) Let $g\in\cp(\rn)$. Then $g\in C^{\log}(\rn)$ if
and only if $1/g\in C^{\log}(\rn)$.
\end{remark}

For all $x\in\rn$ and $r\in(0,\fz)$, denoted by $Q(x,r)$ the cube centered at $x$
with side-length $r$, whose sides parallel axes of coordinates.
Let $\phi:\ \rn\times[0,\fz)\to(0,\fz)$ be a measurable function.
In this article, we always suppose that $\phi$ satisfies the
following two conditions:

\begin{enumerate}
\item[(\textbf{S1})]  there exist positive constants $c_1$ and $\wz c_1$ such that,
for all $x\in\rn$
and $r\in(0,\fz)$,
$$(\wz c_1)^{-1}\le\frac{\phi(x,r)}{\phi(x,2r)}\le c_1;$$

\item[(\textbf{S2})]  there exists a positive constant $c_2$ such that,
for all $x,\,y\in\rn$
and $r\in(0,\fz)$ with $|x-y|\le r$,
$$(c_2)^{-1}\le\frac{\phi(x,r)}{\phi(y,r)}\le c_2.$$
\end{enumerate}

In what follows, for all cubes $Q:=Q(x,r)$ with $x\in\rn$ and $r\in(0,\fz)$,
let $$\phi(Q):=\phi(Q(x,r)):=\phi(x,r).$$
\begin{remark}\label{r-phi}
(i) We point out that the conditions (\textbf{S1}) and (\textbf{S2}) of $\phi$
are, respectively, called \emph{doubling condition} and \emph{compatibility condition}, which have been used by Nakai \cite{nakai93,nakai06}
and Nakai and Sawano \cite{ns12} when
they studied generalized Campanato spaces.

(ii) Let $\phi(Q):=|Q|^\tau$ with
$\tau\in[0,\fz)$ for all cubes $Q$. Then, obviously, $\phi$ satisfies
the conditions (\textbf{S1}) and (\textbf{S2}).

(iii) Let $p(\cdot)\in C^{\log}(\rn)$.
The set function $\phi$, defined by setting, for all cubes $Q$,
$$\phi(Q):=\frac{\|\chi_Q\|_{\vlp}}{|Q|},$$
which is just \cite[Example 6.4]{ns12}, satisfies the conditions
(\textbf{S1}) and (\textbf{S2}).

(iv) Let $\phi$ be a \emph{nondecreasing} set function, namely,
there exists a positive constant $C$ such that, for all cubes $Q_1\subset Q_2$,
$\phi(Q_1)\le C\phi(Q_2)$. If $\phi$ satisfies the
condition (\textbf{S1}), then $\phi$ also satisfies the
condition (\textbf{S2}). Indeed, for all $x,\ y\in\rn$ and $r\in(0,\fz)$
with $|x-y|\le r$, it is easy to see that $Q(x,r)\subset Q(y,2r)$
and $Q(y,r)\subset Q(x,2r)$ and, by the condition
(\textbf{S1}), we see that
$$1\ls\frac{\phi(x,r)}{\phi(x,2r)}\ls
\frac{\phi(x,r)}{\phi(y,r)}\ls \frac{\phi(y,2r)}{\phi(y,r)}\ls1$$
with the implicit positive constants independent of $x,\, y$ and $r$.
Thus, $\phi$ satisfies the condition (\textbf{S2}).

(v) Let $\phi(Q):=\int_Qw(x)\,dx$ for all cubes $Q$, where $w$ is a classical
Muckenhoupt $A_p(\rn)$-weight with $p\in[1,\fz]$. It is well known that
each Muckenhoupt $A_p(\rn)$-weight is doubling, thus, by (iv), we conclude that
 $\phi$ satisfies the conditions (\textbf{S1}) and (\textbf{S2}).
For the definition and properties of Muckenhoupt $A_p(\rn)$-weights,
we refer the reader to
\cite{stein93}.
\end{remark}
Let $\cs(\rn)$ be the \emph{space of all Schwartz functions} on
$\rn$ and $\cs'(\rn)$
its \emph{topological dual space}.
We say a pair $(\vz, \Phi)$ of functions to be
\emph{admissible} if $\vz,\ \Phi\in\cs(\rn)$
satisfy
\begin{equation}\label{x.1}
 {\rm supp}\, \wh \vz\subset\lf\{\xi\in\rn:\ \frac12\le|\xi|\le2\r\}\
 {\rm and}\ |\wh \vz(\xi)|\ge
c>0\ {\rm when}\ \frac 35\le|\xi|\le\frac53
\end{equation}
and
\begin{equation}\label{x.2}
{\rm supp}\, \wh \Phi\subset\{\xi\in\rn:\ |\xi|\le2\}\ {\rm and}\
|\wh\Phi(\xi)|\ge c>0\ {\rm when}\
|\xi|\le\frac53,
\end{equation}
where $\wh f(\xi):=\int_{\rn}f(x)e^{-ix\cdot\xi}\,dx$ for all $\xi\in\rn$
and $f\in L^1(\rn)$, and $c$ is a positive constant independent of $\xi\in\rn$.
Throughout the article, for all $\vz\in\cs(\rn)$,
$j\in\nn:=\{1,2,\dots\}$ and $x\in\rn$, we put
$\vz_j(x):=2^{jn}\vz(2^jx)$ and $\wz \vz(x):=\overline{\vz(-x)}$.

For $j\in\zz$ and $k\in\zz^n$,
denote by $Q_{jk}$ the \emph{dyadic cube} $2^{-j}([0,1)^n+k)$,
$x_{Q_{jk}}:=2^{-j}k$
its \emph{lower left corner} and $\ell(Q_{jk})$ its \emph{side length}.
Let
$$\cq:=\{Q_{jk}:\ j\in\zz,\ k\in\zz^n\},\ \cq^\ast:=\{Q\in\cq:\ \ell(Q)\le1\}$$
 and $j_Q:=-\log_2\ell(Q)$ for all $Q\in\cq$.

Now we introduce Triebel-Lizorkin-type spaces with variable exponents.
\begin{definition}\label{d-tl}
Let $(\vz,\Phi)$ be a pair of admissible functions on $\rn$.
Let $p,\, q\in \cp(\rn)$ satisfy
$$0<p_-\le p_+<\fz,\ 0<q_-\le q_+<\fz$$
 and
$\frac1p$, $\frac1q\in C^{\log}(\rn)$,
$s\in C_{\loc}^{\log}(\rn)\cap L^\fz(\rn)$ and
$\phi$ be a set function satisfying the conditions
(\textbf{S1}) and (\textbf{S2}). Then the \emph{Triebel-Lizorkin-type
space with variable exponents},
$F_{p(\cdot),q(\cdot)}^{s(\cdot),\phi}(\rn)$, is defined to be the set of all
$f\in\cs'(\rn)$ such that
$$\|f\|_{F_{p(\cdot),q(\cdot)}^{s(\cdot),\phi}(\rn)}
:=\sup_{P\in\cq}\frac1{\phi(P)}
\lf\|\lf\{\sum_{j=\max\{j_P,0\}}^\fz\lf[2^{js(\cdot)}|\vz_j\ast f(\cdot)|\r]
^{q(\cdot)}\r\}^{\frac1{q(\cdot)}}\r\|_{L^{p(\cdot)}(P)}<\fz,$$
where, when $j=0$,
$\vz_0$ is replaced by $\Phi$, and the supremum is taken over all dyadic
cubes $P$ in $\rn$.
\end{definition}
\begin{remark}\label{r-defi}
Let $p(\cdot),\ q(\cdot),\ s(\cdot)$ be as in Definition \ref{d-tl}.

(i) When $\phi(Q):=1$ for all cubes $Q$, then
$$\btlve=F_{p(\cdot),q(\cdot)}^{s(\cdot)}(\rn),$$
where $F_{p(\cdot),q(\cdot)}^{s(\cdot)}(\rn)$ denotes the
\emph{Triebel-Lizorkin space
with variable smoothness and integrability}
which is introduced and investigated in \cite{dhr09}.
We point out that Diening et al. \cite{dhr09} studied the space
$F_{p(\cdot),q(\cdot)}^{s(\cdot)}(\rn)$
 under an additional assumption that $s$ is nonnegative,
 which is generalized to the case that $s:\ \rn\to\rr$ and
 $s\in C_{\loc}^{\log}(\rn)\cap L^\fz(\rn)$ by Kempka in \cite{kempka10}.

(ii) When $p,\, q,\, s$ are constant exponents and $\phi(Q):=|Q|^{\tau}$ with
$\tau\in[0,\fz)$ for all cubes $Q$, then
$$\btlve=F_{p,q}^{s,\tau}(\rn),$$
 where
$F_{p,q}^{s,\tau}(\rn)$ denotes the \emph{Triebel-Lizorkin-type space}
which was introduced and studied in \cite{ysiy}.

(iii) When $q,\, s$ are constant exponents and $\phi(Q):=|Q|^{\tau}$ with
$\tau\in[0,\fz)$ for all cubes $Q$, then
$$\btlve=F_{p(\cdot),q}^{s,\tau}(\rn),$$
which was investigated in \cite{lsuyy1}.

(iv) The condition, $0<p_-\le p_+<\fz$, is quite natural, since
there also exists the restriction $p<\fz$ in the case of constant exponents.
The assumption, $0<q_-\le q_+<\fz$, is different from
the case of constant exponents where $q=\fz$ is included.
This restriction comes from
the application of the convolution inequality in \cite[Theorem 3.2]{dhr09}
(see also Lemma \ref{l-estimate2} below), when proving that the space
$\btlve$ is independent of the choice of admissible function pairs $(\vz,\Phi)$.
Observe that, even when $\phi(Q)=1$ for all cubes $Q$,
this restriction is necessary;
see \cite[p.\,857]{kv12}.
\end{remark}

This article is organized as follows.

Section \ref{s2} is devoted to showing that the space $\btlve$
is independent of the choice of admissible function pairs $(\vz,\Phi)$, which
is a consequence of the $\vz$-transform characterization
of $\btlve$ in the sense of Frazier and Jawerth
(see Corollary \ref{c-inde} below).
Different from the method used in the case of constant exponents,
in the proof of the boundedness of the $\vz$-transform
$S_\vz$ from $\btlve$ to $\tlve$ (the sequence space corresponding to
the function space $\btlve$), we make full use of the
so-called \emph{$r$-trick lemma}
(namely, \cite[Lemma A.6]{dhr09}) and the vector-valued convolution
inequality (namely, \cite[Theorem 3.2]{dhr09}; see also
Lemma \ref{l-estimate2} below).
We point out that the vector-valued convolution inequality
also plays an essential
role throughout the remainder of this article.

In Section \ref{s3}, we establish equivalent characterizations of
$\btlve$ in term of
molecules, atoms (see Theorem \ref{t-md} below)
or Peetre maximal functions
(see Theorem \ref{t-pmfc} below). To prove Theorem \ref{t-md}, we
borrow some ideas from the proof of \cite[Theorem 3.12]{d13}
which gives the atomic
characterization of the Besov-type and the Triebel-Lizorkin-type spaces, and
the proof of \cite[Theorem 3.13]{kempka10} which gives
the molecular characterization
of 2-microlocal Besov and Triebel-Lizorkin spaces with variable integrability.
The Sobolev embedding (see Proposition \ref{p-se1} below)
plays a key role in the proof of Theorem \ref{t-md},
which may be of independent interest.
The proof of Theorem \ref{t-pmfc} is similar to that of \cite[Theorem 3.2]{lsuyy}
(see also \cite[Theorem 2.6]{ut12}) and strongly depends on the vector-valued
convolution inequality on $L^{p(\cdot)}(\ell^{q(\cdot)}(\rn))$;
see Lemma \ref{l-estimate2} below. As applications of Theorem \ref{t-pmfc},
some equivalent norms of $\btlve$ are obtained (see Theorem \ref{t-sum} below),
which are further used to show that the spaces $\btlve$ include
the Morrey space with variable exponents
$\cm_\phi^{p(\cdot)}(\rn)$ as a special case; see Proposition \ref{p-lpc} below.

At the end of Section \ref{s3}, via some examples, we show that, in general,
the scales of Triebel-Lizorkin-type spaces with variable exponents
 and variable Triebel-Lizorkin-Morrey
spaces (see \cite{ho12}) do not cover each other
(see Remark \ref{r-compare} below).
Here we point out that Triebel-Lizorkin-type spaces with variable exponents
in this article cover the Triebel-Lizorkin-type spaces
$F_{p,q}^{s,\tau}(\rn)$ for all $\tau\in[0,\fz)$, but the variable
Triebel-Lizorkin-Morrey space in \cite{ho12} does only cover
the Triebel-Lizorkin-type space $F_{p,q}^{s,\tau}(\rn)$ for $\tau\in[0,1/p)$.

In Section \ref{s4}, as an application of the atomic characterization of
$\btlve$, we mainly establish a trace theorem of Triebel-Lizorkin-type
spaces with variable exponents (see Theorem \ref{t-trace1} below).
In the case that $\phi$ is as in Remark \ref{r-defi}(i),
the corresponding result of Theorem \ref{t-trace1}
was obtained in \cite[Theorem 3.13]{dhr09} with
a certain weaker condition (see Remark \ref{r-trace-x}(ii) below),
however, the convergence of the trace of
$f\in F_{p(\cdot),q(\cdot)}^{s(\cdot)}(\rn)$ was not
given out exactly in \cite[p.\,1760]{dhr09}.
In Section \ref{s4}, we first show that the trace operator
is well defined on the space $\btlve$ (see Lemma \ref{l-welld} below),
with a certain restriction on $p$ and $s$, by an argument similar to that used in the
proof of Theorem \ref{t-md}(i). Indeed, in Lemma \ref{l-welld} below, we
prove that the trace of $f\in\btlve$ converges in $\cs'(\rr^{n-1})$.
Then, similar to the proof of \cite[Theorem 3.13]{dhr09},
we show that the trace space of $\btlve$ is independent
of the $n$-th coordinate of
variable exponents $p(\cdot)$ and $s(\cdot)$, and complete the proof
by an argument similar to that used in the proof of \cite[Theorem 6.8]{ysiy}.

Finally, we make some conventions on notation.
Throughout this article, we denote by
$C$ a \emph{positive constant} which is independent of the main
parameters, but may vary from line to line. The \emph{symbols}
$A\ls B$ means $A\le CB$. If $A\ls B$ and $B\ls A$, then we write $A\sim B$.
For all $a,\,b\in\rr$, let
$$a\vee b:=\max\{a,\,b\}.$$
If $E$ is a subset of $\rn$, we denote by $\chi_E$ its
 \emph{characteristic function}.
For all cubes $Q$, we use $c_Q$ to denote its the center.
For all $k:=(k_1,\dots,k_n)\in\zz^n$, let
$$|k|:=|k_1|+\cdots+|k_n|.$$
Let $\nn:=\{1,2,\dots\}$ and $\zz_+:=\{0\}\cup\nn$.

\section{The $\varphi$-transform characterization\label{s2}}

In this section, we first introduce the sequence space $\tlve$
corresponding to the space $\btlve$ and then establish their
$\vz$-transform characterization in the sense of Frazier and Jawerth \cite{fj90}.
As a consequence of the $\vz$-transform characterization,
we conclude that the space $\btlve$ is independent of the choice
of admissible function pairs $(\vz,\Phi)$.

\begin{definition}\label{d-tls}
Let $p(\cdot),\ s(\cdot)$ and $\phi$ be as in Definition
\ref{d-tl} and $q(\cdot)$
 as either in Definition \ref{d-tl} or $q(\cdot)\equiv\fz$.
Then the \emph{sequence space}
$f_{p(\cdot),q(\cdot)}^{s(\cdot),\phi}(\rn)$ is defined
to be the set of all sequences $t:=\{t_Q\}_{Q\in\cq^\ast}\subset\cc$ such that
$$\|t\|_{f_{p(\cdot),q(\cdot)}^{s(\cdot),\phi}(\rn)}
:=\sup_{P\in\cq}\frac1{\phi(P)}\lf\|\lf\{\sum_{{Q\subset P,\,Q\in\cq^\ast}}
\lf[|Q|^{-[\frac{s(\cdot)}n+\frac 12]}|t_Q|\chi_Q\r]^{q(\cdot)}
\r\}^{\frac1{q(\cdot)}}\r\|_{L^{p(\cdot)}(P)}<\fz$$
with the usual modification made when $q(\cdot)\equiv\fz$,
where the supremum is taken over all dyadic cubes $P$ in $\rn$.
\end{definition}

\begin{remark}\label{r-lattice}
(i) It is easy to see that $\tlve$ is a quasi-Banach lattice, namely, for
all $t^{(1)}:=\{t_Q^{(1)}\}_{Q\in\cq^\ast}\subset\cc$ and
$t^{(2)}:=\{t^{(2)}_Q\}_{Q\in\cq^\ast}\subset\cc$, if
$|t_Q^{(1)}|\le|t_Q^{(2)}|$ for all $Q\in\cq^\ast$, then
$$\|t^{(1)}\|_{\tlve}\le\|t^{(2)}\|_{\tlve}.$$

(ii) Let
$\mathcal D_0(\rn):=\{Q\subset\rn:\ Q\ {\rm is\ a\ cube\ and}\
\ell(Q)=2^{-j_0}\ {\rm for\ some}\
j_0\in\zz\}.$
Then it is easy to prove that the supremum in Definitions
\ref{d-tl} and \ref{d-tls}
can be equivalently taken over all cubes in $\mathcal D_0(\rn)$,
 the details being omitted.
\end{remark}

Let $(\vz,\Phi)$ be a pair of admissible functions. Then $(\wz \vz,\wz \Phi)$
is also a pair of admissible functions.
Thus, by \cite[pp.\,130-131]{fj90} or \cite[Lemma (6.9)]{fjw91},
we know that there exist
Schwartz functions $\psi$ and $\Psi$ satisfying \eqref{x.1} and \eqref{x.2},
respectively, such that, for all $\xi\in\rn$,
\begin{equation}\label{cz1}
\widehat{\Phi}(\xi)\widehat{\Psi}(\xi)+
\sum_{j=1}^\fz\widehat{\vz}(2^{-j}\xi)\widehat{\psi}(2^{-j}\xi)=1.
\end{equation}
Recall that the \emph{$\vz$-transform}
$S_\vz$ is defined to be the mapping taking each $f\in\cs'(\rn)$ to the
sequence $S_\vz(f):=\{(S_\vz f)_Q\}_{Q\in\cq^\ast}$,
where $(S_\vz f)_Q:=|Q|^{1/2}\Phi\ast f(x_Q)$ if $\ell(Q)=1$ and
$(S_\vz f)_Q:=|Q|^{1/2}\vz_{j_Q}\ast f(x_Q)$ if $\ell(Q)<1$;
the \emph{inverse $\vz$-transform} $T_\psi$ is defined to
be the mapping taking a sequence $t:=\{t_Q\}_{Q\in\cq^\ast}\subset\cc$ to
$$T_\psi t:=\sum_{{Q\in\cq^\ast,\,\ell(Q)=1}}t_Q\Psi_Q
+\sum_{{Q\in\cq^\ast,\,\ell(Q)<1}}t_Q\psi_Q;$$
see, for example, \cite[p.\,31]{ysiy}.

Now we state the following $\vz$-transform characterization for $\btlve$, which
is the main result of this section. For the corresponding result of
Triebel-Lizorkin-type spaces, see \cite[Theorem 2.1]{ysiy}.
\begin{theorem}\label{t-transform}
Let $p,\ q$, $s$ and $\phi$ be as in Definition \ref{d-tl}
and $\vz,\ \psi,\ \Phi$ and $\Psi$ be as in \eqref{cz1}.
Then the operators $S_\vz:\ F_{p(\cdot),q(\cdot)}^{s(\cdot),\phi}(\rn)
\to f_{p(\cdot),q(\cdot)}^{s(\cdot),\phi}(\rn)$ and
$T_\psi:\ f_{p(\cdot),q(\cdot)}^{s(\cdot),\phi}(\rn)
\to F_{p(\cdot),q(\cdot)}^{s(\cdot),\phi}(\rn)$ are bounded. Furthermore,
$T_\psi\circ S_\vz$ is the identity on
$F_{p(\cdot),q(\cdot)}^{s(\cdot),\phi}(\rn)$.
\end{theorem}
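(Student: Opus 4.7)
The plan is to follow the classical Frazier--Jawerth blueprint, replacing every appeal to the scalar or vector-valued Hardy--Littlewood maximal inequality (none of which is available on $L^{p(\cdot)}(\ell^{q(\cdot)}(\rn))$) by the two surrogates tailored to the variable-exponent setting: the $r$-trick of \cite[Lemma A.6]{dhr09}, which converts pointwise bounds for band-limited functions into scalar maximal estimates, and the vector-valued convolution inequality of \cite[Theorem 3.2]{dhr09}, recalled as Lemma \ref{l-estimate2} below. A further ingredient specific to the type-space framework is to keep track of the reference cube $P$ appearing in the norm: the analyses are carried out with the $L^{p(\cdot)}$-integration restricted to $P$ (or a fixed dilate of $P$), and the conditions (\textbf{S1}) and (\textbf{S2}) are invoked to absorb factors involving $\phi(P)$ uniformly in $P$.

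\emph{Boundedness of $T_\psi$.} Given $t\in\tlve$, I first need to verify that $T_\psi t=\sum_Q t_Q\psi_Q$ converges in $\cs'(\rn)$; this follows from testing against a Schwartz function together with the rapid decay of $\psi_Q$ and crude pointwise estimates on $t$ extracted from its sequence norm. Next, the Fourier support conditions in \eqref{x.1} and \eqref{x.2} imply $\wh\vz_j\,\wh\psi_k\equiv 0$ whenever $|j-k|\ge 2$, so only finitely many scales contribute to $\vz_j\ast(T_\psi t)$; combined with Schwartz decay this yields, for any $M\in\nn$ and $|j-j_Q|\le 1$,
$$
|\vz_j\ast\psi_Q(x)|\ls 2^{-j_Qn/2}\lf(1+2^{\min\{j,j_Q\}}|x-x_Q|\r)^{-M}.
$$
For each $j$, I rewrite $\sum_{|j_Q-j|\le 1}|t_Q|(1+2^j|x-x_Q|)^{-M}$ using the $r$-trick with $r\in(0,\min\{p_-,q_-\})$ so that it is dominated by $[\cm(g_j^r)(x)]^{1/r}$ for a suitable function $g_j$ built from $\{|Q|^{-1/2}t_Q\chi_Q\}_{\ell(Q)=2^{-j}}$. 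Multiplying by $2^{js(\cdot)}$, taking the $\ell^{q(\cdot)}$-norm in $j$ and the $L^{p(\cdot)}(P)$-norm, dividing by $\phi(P)$, applying Lemma \ref{l-estimate2} to eliminate $\cm$, and taking the supremum over $P\in\cq$, I obtain $\|T_\psi t\|_{\btlve}\ls\|t\|_{\tlve}$.

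\emph{Boundedness of $S_\vz$.} Fix $f\in\btlve$. Since $\vz_j\ast f$ has Fourier support in a ball of radius $\ls 2^j$, the $r$-trick yields, for any $r>0$ and any $y$ in the cube $Q$ with $\ell(Q)=2^{-j}$,
$$
|\vz_j\ast f(x_Q)|\ls\lf[\cm\lf(|\vz_j\ast f|^r\r)(y)\r]^{1/r}.
$$
Since $|Q|^{-[s(\cdot)/n+1/2]}|(S_\vz f)_Q|\chi_Q(y)=2^{js(\cdot)}|\vz_j\ast f(x_Q)|\chi_Q(y)$ and at each $y\in P$ exactly one dyadic cube $Q\subset P$ with $\ell(Q)=2^{-j}$ contains $y$ (provided $j\ge j_P$), summing the $q(\cdot)$-th powers in $j$, taking the $L^{p(\cdot)}(P)$-norm, choosing $r<\min\{p_-,q_-\}$, and applying Lemma \ref{l-estimate2} to discard $\cm$ produces
$$
\frac1{\phi(P)}\lf\|\lf\{\sum_{Q\subset P,\,Q\in\cq^\ast}\lf[|Q|^{-\frac{s(\cdot)}n-\frac12}|(S_\vz f)_Q|\chi_Q\r]^{q(\cdot)}\r\}^{\frac1{q(\cdot)}}\r\|_{L^{p(\cdot)}(P)}\ls\|f\|_{\btlve},
$$
and taking the supremum over $P\in\cq$ gives $\|S_\vz f\|_{\tlve}\ls\|f\|_{\btlve}$.

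\emph{Identity and main obstacle.} From \eqref{cz1}, by the standard sampling argument for the band-limited pieces $\vz_j\ast f$, one obtains the Calder\'on-type reproducing formula
$$
f=\sum_{Q\in\cq^\ast,\,\ell(Q)=1}(S_\vz f)_Q\,\Psi_Q+\sum_{Q\in\cq^\ast,\,\ell(Q)<1}(S_\vz f)_Q\,\psi_Q\quad\text{in } \cs'(\rn),
$$
valid for every $f\in\cs'(\rn)$; for $f\in\btlve$ the boundedness of $T_\psi$ established above justifies interpreting the right-hand side as $T_\psi(S_\vz f)$, so $T_\psi\circ S_\vz=\mathrm{id}$ on $\btlve$. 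The main obstacle is the $T_\psi$-estimate: the absence of a Fefferman--Stein type inequality on $L^{p(\cdot)}(\ell^{q(\cdot)})$ prevents a direct vector-valued handling of $\sum_Q t_Q\vz_j\ast\psi_Q$, so one must carefully engineer the use of the $r$-trick so that the resulting scalar maximal function falls within the scope of Lemma \ref{l-estimate2}, while simultaneously ensuring that the restriction of the $L^{p(\cdot)}$-integration to the reference cube $P$ is preserved under convolution with the decaying kernels---the latter requiring splitting $\sum_Q$ into pieces inside and outside a fixed dilate of $P$ and exploiting (\textbf{S1}), (\textbf{S2}) to control the remote-cube tails uniformly in $P$.
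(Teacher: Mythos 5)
Your overall blueprint---$r$-trick plus the vector-valued convolution inequality of Lemma~\ref{l-estimate2}, with (\textbf{S1}) and (\textbf{S2}) absorbing the $\phi(P)$-normalization and the reproducing formula giving $T_\psi\circ S_\vz=\mathrm{id}$---is the paper's approach, but there is a genuine conceptual gap in how you propose to execute it. In both the $S_\vz$ and $T_\psi$ steps you dominate the relevant quantity by the Hardy--Littlewood maximal operator $\cm$ and then assert that Lemma~\ref{l-estimate2} ``eliminates'' $\cm$. This cannot work: Lemma~\ref{l-estimate2} is a convolution inequality for the fixed kernels $\eta_{j,m}$, one only has $\eta_{j,m}\ast g\lesssim\cm g$ and not the converse, and---most importantly---the entire reason Lemma~\ref{l-estimate2} was devised is that the Fefferman--Stein inequality, i.e.\ the boundedness of $\cm$ on $L^{p(\cdot)}(\ell^{q(\cdot)}(\rn))$, is \emph{unavailable} in the variable-exponent setting. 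Passing through a $\cm$-bound is therefore a dead end. The remedy is to use the sharpened form of the $r$-trick that produces an $\eta$-convolution bound directly (the paper cites ``\cite[Lemma A.6]{dhr09} \emph{and its proof}'' for exactly this reason), namely
\[
\sup_{z\in Q_{jk}}|\vz_j\ast f(z)|^r\ \lesssim\ \sum_{l\in\zz^n}(1+|l|)^{-m}\,\eta_{j,m+L}\ast\bigl(|\vz_j\ast f|^r\chi_{Q_{j(k+l)}}\bigr),
\]
after which Lemma~\ref{l-estimate2} applies scale by scale; the sum over $l$ is precisely where the restriction of the $L^{p(\cdot)}$-integration to $P$ is preserved, and it is absorbed via $\phi$-doubling (Lemma~\ref{l-esti-cube})---a point you flag for $T_\psi$ but omit for $S_\vz$, where it is equally needed.

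A smaller but real imprecision: the quantity $\sum_{|j_Q-j|\le1}|t_Q|(1+2^j|x-x_Q|)^{-M}$ appearing in your $T_\psi$ argument is not a band-limited function of $x$, so \cite[Lemma A.6]{dhr09} does not apply to it. What is actually needed there is the elementary Frazier--Jawerth estimate dominating such a sum by $\bigl[\eta_{j,m}\ast\bigl(\sum_{\ell(Q)=2^{-j}}|t_Q||Q|^{-1/2}\chi_Q\bigr)^r\bigr]^{1/r}$, combined with Lemma~\ref{l-eta} to commute the factor $2^{js(\cdot)}$ past the convolution. The paper packages this computation as Lemma~\ref{l-estimate1}, the estimate $\|t^\ast_{r,\lambda}\|_{\tlve}\le C\|t\|_{\tlve}$ for the majorant sequence, whose proof is where the inside-$3P$/outside-$3P$ decomposition you allude to actually lives; isolating it as a separate lemma is worthwhile because it is reused verbatim in the proof of the atomic decomposition, Theorem~\ref{t-md}(ii).
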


We remark that $T_\psi$ is well defined for all
$t\in f_{p(\cdot),q(\cdot)}^{s(\cdot),\phi}(\rn)$;
see Lemma \ref{l-estimate3} below.
The proof of Theorem \ref{t-transform} is given later.
From Theorem \ref{t-transform} and an argument similar to that
used in the proof of \cite[Remark 2.6]{fj90}, we immediately deduce the following conclusion,
the details being omitted.

\begin{corollary}\label{c-inde}
With all the notation as in Definition \ref{d-tl}, the space
$F_{p(\cdot),q(\cdot)}^{s(\cdot),\phi}(\rn)$ is independent of the choice
of the admissible function pairs $(\vz,\Phi)$.
\end{corollary}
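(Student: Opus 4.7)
The plan is to reduce the claim to the $\vz$-transform characterization of Theorem \ref{t-transform}, essentially following the scheme of \cite[Remark 2.6]{fj90}. Let $(\vz^{(1)},\Phi^{(1)})$ and $(\vz^{(2)},\Phi^{(2)})$ be two admissible pairs, and write $\|\cdot\|_1$ and $\|\cdot\|_2$ for the two quasi-norms on $\btlve$ produced from Definition \ref{d-tl} with these respective choices. By symmetry of the roles of the pairs, it is enough to establish the one-sided estimate $\|f\|_2\ls\|f\|_1$ whenever $f\in\cs'(\rn)$ satisfies $\|f\|_1<\fz$. To set up, I would invoke \eqref{cz1} to pick Schwartz companions $\psi^{(1)},\Psi^{(1)}$ of $\vz^{(1)},\Phi^{(1)}$; Theorem \ref{t-transform} applied to the pair $(\vz^{(1)},\Phi^{(1)})$ then supplies both the bound $\|S_{\vz^{(1)}}f\|_{\tlve}\ls\|f\|_1$ and the reproducing identity $f=T_{\psi^{(1)}}S_{\vz^{(1)}}f$ in $\cs'(\rn)$, whence $\|f\|_2=\|T_{\psi^{(1)}}S_{\vz^{(1)}}f\|_2$.

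The problem is thereby reduced to proving the single operator bound
\begin{equation*}
\|T_{\psi^{(1)}}t\|_2\ls\|t\|_{\tlve}\qquad\text{for every }t\in\tlve,
\end{equation*}
where the left-hand quasi-norm tests the distribution $T_{\psi^{(1)}}t$ against $\vz^{(2)}_j$ and $\Phi^{(2)}$ as prescribed in Definition \ref{d-tl}. I would prove this by repeating, essentially verbatim, the proof of the boundedness of $T_\psi:\tlve\to\btlve$ in Theorem \ref{t-transform}, but with $\vz^{(1)},\Phi^{(1)}$ replaced by $\vz^{(2)},\Phi^{(2)}$ on the norm side. Concretely, one expands
\begin{equation*}
\vz^{(2)}_j\ast T_{\psi^{(1)}}t=\sum_{R\in\cq^\ast,\,\ell(R)=1}t_R\,\vz^{(2)}_j\ast\Psi^{(1)}_R+\sum_{R\in\cq^\ast,\,\ell(R)<1}t_R\,\vz^{(2)}_j\ast\psi^{(1)}_R,
\end{equation*}
together with the analogous expansion for $\Phi^{(2)}\ast T_{\psi^{(1)}}t$. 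The estimation then depends only on two features of the cross-kernels appearing here: their rapid spatial decay at scale $2^{-(j\wee j_R)}$, inherited from the Schwartz class of both factors; and the Fourier-support conditions \eqref{x.1}--\eqref{x.2}, which force each kernel to vanish unless $|j-j_R|$ is bounded by a fixed constant and to decay geometrically in the scale difference otherwise. Both properties are intrinsic to the notion of admissible pair and hold for the kernels involving $\vz^{(2)}$ or $\Phi^{(2)}$ in exactly the same form as for the original pair, so the same chain of pointwise bounds, together with the vector-valued convolution inequality (Lemma \ref{l-estimate2}), yields the desired operator estimate.

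Combining the displayed bound with $f=T_{\psi^{(1)}}S_{\vz^{(1)}}f$ and $\|S_{\vz^{(1)}}f\|_{\tlve}\ls\|f\|_1$ gives $\|f\|_2\ls\|f\|_1$, and interchanging the roles of the two admissible pairs produces the reverse inequality. I expect the only delicate point to be the bookkeeping verification that every pointwise and Fourier-support ingredient invoked in the proof of Theorem \ref{t-transform} depends exclusively on the defining conditions \eqref{x.1}--\eqref{x.2} of an admissible pair and not on the particular admissible functions used; this is what legitimises the substitution of $(\vz^{(2)},\Phi^{(2)})$ for $(\vz^{(1)},\Phi^{(1)})$ in the target norm without any modification of the argument, and it is precisely the reason this corollary drops out so cleanly from the transform characterization rather than requiring an independent analytic input.
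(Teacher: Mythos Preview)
Your proposal is correct and follows essentially the same approach as the paper, which simply cites Theorem \ref{t-transform} together with the argument of \cite[Remark 2.6]{fj90} and omits the details. Your expanded account accurately reconstructs that argument: the key observation---that the proof of the $T_\psi$-boundedness in Theorem \ref{t-transform} uses only the admissibility conditions \eqref{x.1}--\eqref{x.2} and therefore survives replacing the norm-side pair by any other admissible pair---is exactly what makes the corollary immediate.
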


Now we start to show Theorem \ref{t-transform}.
First, we need the following property.
\begin{lemma}\label{l-estimate3}
Let $p,\ q$, $s$ and $\phi$ be as in Definition \ref{d-tl}.
Then, for all $t\in f_{p(\cdot),q(\cdot)}^{s(\cdot),\phi}(\rn)$,
$$T_\psi t:=\sum_{{Q\in\cq^\ast,\,\ell(Q)=1}}t_Q\Psi_Q
+\sum_{{Q\in\cq^\ast,\,\ell(Q)<1}}t_Q\psi_Q$$
converges in $\cs'(\rn)$; moreover,
$T_\psi:\ f_{p(\cdot),q(\cdot)}^{s(\cdot),\phi}(\rn)\to \cs'(\rn)$
is continuous.
\end{lemma}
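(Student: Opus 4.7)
The plan is to show, for any test function $\eta\in\cs(\rn)$, that the series
$$\la T_\psi t,\eta\ra:=\sum_{Q\in\cq^\ast,\,\ell(Q)=1}t_Q\la\Psi_Q,\eta\ra+\sum_{Q\in\cq^\ast,\,\ell(Q)<1}t_Q\la\psi_Q,\eta\ra$$
converges absolutely and is dominated by a fixed Schwartz seminorm of $\eta$ times $\|t\|_{\tlve}$. Once this is established, a standard argument yields both the $\cs'(\rn)$-convergence of $T_\psi t$ and the continuity of $T_\psi:\ \tlve\to\cs'(\rn)$.

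The first ingredient is a pointwise bound on the coefficients. Choosing $P=Q$ in Definition \ref{d-tls} and keeping only the single summand indexed by $Q$ gives
$$|t_Q|\,\big\||Q|^{-s(\cdot)/n-1/2}\chi_Q\big\|_{\vlp}\le\phi(Q)\,\|t\|_{\tlve}.$$
Writing $|Q|^{-s(x)/n}=2^{j_Q s(x)}$ and using $s\in L^\fz(\rn)$ to bound this from below by $2^{-j_Q\|s\|_{L^\fz(\rn)}}$, together with the elementary lower bound $\|\chi_Q\|_{\vlp}\gs 2^{-j_Q n/p_-}$ for $Q\in\cq^\ast$ (immediate from the modular definition), yields
$$|t_Q|\ls\phi(Q)\,2^{j_Q\sigma}\|t\|_{\tlve},\qquad\sigma:=\tfrac{n}{p_-}+\|s\|_{L^\fz(\rn)}-\tfrac{n}{2}.$$
The second ingredient is the standard Schwartz pairing estimate. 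Since $\wh\psi$ is supported away from the origin, $\psi$ has vanishing moments of all orders, so a Taylor expansion of $\eta$ around $x_Q$ combined with the Schwartz decay of $\eta$ and of $\psi$ gives, for every $L,M\in\nn$,
$$|\la\psi_Q,\eta\ra|\ls 2^{-j_Q(n/2+L+1)}(1+|x_Q|)^{-M}\quad\text{when }\ell(Q)<1,$$
and $|\la\Psi_Q,\eta\ra|\ls(1+|x_Q|)^{-M}$ when $\ell(Q)=1$, with implicit constants depending only on finitely many Schwartz seminorms of $\eta$.

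The third ingredient is a polynomial control of $\phi(Q)$. Iterating (\textbf{S1}) from scale $1$ down to scale $2^{-j_Q}$ gives $\phi(x_Q,2^{-j_Q})\le c_1^{j_Q}\phi(x_Q,1)$, and chaining (\textbf{S2}) with (\textbf{S1}) along the path $(x_Q,1)\to(x_Q,R)\to(0,R)\to(0,1)$ for $R:=\max\{1,|x_Q|\}$ produces $\phi(x_Q,1)\ls(1+|x_Q|)^A$ with $A:=\log_2(c_1\wz c_1)$. Combining all three ingredients bounds $|t_Q\la\psi_Q,\eta\ra|$ by a constant times
$$2^{j_Q(\sigma-n/2-L-1+\log_2 c_1)}(1+|x_Q|)^{A-M}\|t\|_{\tlve}.$$
For fixed $j=j_Q$, summing over $k\in\zz^n$ with $x_Q=2^{-j}k$ gives $\sum_k(1+2^{-j}|k|)^{A-M}\ls 2^{jn}$ as soon as $M>A+n$, after which the geometric series in $j$ converges provided $L+1>\sigma+n/2+\log_2 c_1$, a requirement that is always met because $\psi$ has arbitrarily many vanishing moments. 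This yields $|\la T_\psi t,\eta\ra|\ls\|t\|_{\tlve}$ with the constant depending only on $\eta$, which simultaneously furnishes the tempered-distribution meaning of $T_\psi t$ and the claimed continuity. The main obstacle is the interaction between the possible polynomial growth of $\phi$ and the variable-exponent size bound on $t_Q$; both can only be absorbed by taking sufficiently many vanishing moments of $\psi$ and sufficient Schwartz decay of $\eta$, which is precisely why one is forced to use the $L^\fz$-bound of $s$ together with the crude lower bound on $\|\chi_Q\|_{\vlp}$ rather than any finer pointwise estimate.
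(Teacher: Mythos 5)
Your proposal is correct and follows essentially the same strategy as the paper's proof of Lemma~\ref{l-estimate3}: a pointwise coefficient bound $|t_Q|\ls\phi(Q)\,\|\chi_Q\|_{\vlp}^{-1}|Q|^{s_-/n+1/2}\|t\|_{\tlve}$ obtained from the sequence-norm definition, the standard Schwartz pairing decay for $\la\psi_Q,\eta\ra$ (which the paper imports from \cite[Lemma~2.4]{ysiy}), and a polynomial control of $\phi(Q_{jk})$ via (\textbf{S1}) and (\textbf{S2}) (the paper's Lemma~\ref{l-esti-cube}), all combined by choosing the moment/decay parameters large enough. The one genuine (minor) difference is that you use the elementary modular lower bound $\|\chi_Q\|_{\vlp}\ge|Q|^{1/p_-}$, which holds for any $p\in\cp(\rn)$, whereas the paper cites Lemma~\ref{l-esti-cube-1} (relying on $\log$-H\"older continuity via \cite[Lemma~2.6]{zyl14}); your version is a little cruder but suffices here and avoids that regularity hypothesis at this step.
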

To prove Lemma \ref{l-estimate3}, we need the following technical lemmas.

\begin{lemma}\label{l-esti-cube}
Let $\phi$ be a set function satisfying the conditions (\textbf{S1}) and
(\textbf{S2}). Then
\begin{enumerate}
\item[{\rm(i)}] there exists a positive constant $C$ such that,
for any $j\in\zz_+$ and $k\in\zz^n$,
$$\phi(Q_{jk})\le C 2^{j\log_2c_1}(|k|+1)^{\log_2(c_1\wz c_1)};$$

\item[{\rm(ii)}] there exists a positive constant $C$ such that,
for all $Q\in\cq$ and $l\in\zz^n$,
$$\frac{\phi(Q+l\ell(Q))}{\phi(Q)}\le C(1+|l|)^{\log_2(c_1\wz c_1)},$$
where $c_1$ and $\wz c_1$ are as in the condition (\textbf{S1}).
\end{enumerate}
\end{lemma}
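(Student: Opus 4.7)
The overall strategy is that both bounds follow from a two-idea template: condition (\textbf{S1}) lets one change the scale $r$ at a fixed center at the price of a factor $c_1$ per halving and $\wz c_1$ per doubling, while (\textbf{S2}) lets one swap the center at a fixed scale at the price of the single constant $c_2$, provided the two centers are within one radius of each other. To compare $\phi$ at two centers separated by distance $d$, I would therefore inflate the scale to some $2^m r \ge d$, swap the center via (\textbf{S2}), then deflate back; this costs roughly $(c_1\wz c_1)^m$, and the choice $m\sim\log_2(1+d/r)$ produces the polynomial in $1+d/r$ with exponent $\log_2(c_1\wz c_1)$ appearing in the statement.

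For part (ii), I would set $r:=\ell(Q)$, let $c_Q$ denote the center of $Q$, and pick the smallest $m\in\zz_+$ with $|l|\le 2^m$, so that $2^m\le 2(1+|l|)$. Since $c_Q+lr$ and $c_Q$ are at distance $|l|r\le 2^m r$, condition (\textbf{S2}) gives
$$\phi(c_Q+lr,\,2^m r)\le c_2\,\phi(c_Q,\,2^m r).$$
Iterating (\textbf{S1}) $m$ times produces $\phi(c_Q+lr,\,r)\le c_1^m\,\phi(c_Q+lr,\,2^m r)$ and $\phi(c_Q,\,2^m r)\le \wz c_1^m\,\phi(c_Q,\,r)$. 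Chaining these three estimates yields $\phi(c_Q+lr,\,r)\le c_2(c_1\wz c_1)^m\phi(c_Q,\,r)$, and $2^m\ls 1+|l|$ converts $(c_1\wz c_1)^m$ into $C(1+|l|)^{\log_2(c_1\wz c_1)}$, which is exactly (ii).

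For part (i), I would observe that $Q_{jk}=Q_{j0}+k\,\ell(Q_{j0})$, so (ii) applied with $Q:=Q_{j0}$ and $k$ in place of $l$ yields $\phi(Q_{jk})\le C(1+|k|)^{\log_2(c_1\wz c_1)}\,\phi(Q_{j0})$, reducing the problem to $\phi(Q_{j0})\ls c_1^j = 2^{j\log_2 c_1}$. Iterating (\textbf{S1}) at the center $c_{j0}$ from scale $1$ down to scale $2^{-j}$ gives $\phi(c_{j0},2^{-j})\le c_1^j\,\phi(c_{j0},1)$, and since $|c_{j0}|=2^{-j-1}\sqrt n$ is bounded by a constant depending only on $n$, one further application of (\textbf{S1}) and (\textbf{S2}) at a fixed scale bounds $\phi(c_{j0},1)$ by a constant multiple of $\phi(0,1)$, completing the argument. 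No serious obstacle arises; the only points of bookkeeping are that $m$ should be a nonnegative integer so that the iteration of (\textbf{S1}) makes sense (which is why I write $1+|l|$ rather than $|l|$), and that $c_1\wz c_1\ge 1$ (an immediate consequence of (\textbf{S1})), ensuring that $\log_2(c_1\wz c_1)\ge 0$ and the polynomial bound is meaningful.
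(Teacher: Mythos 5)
Your proof is correct and follows the same inflate-swap-deflate strategy the paper uses for both parts: expand the radius by roughly $\log_2(1+|l|)$ doublings via (\textbf{S1}), swap centers once via (\textbf{S2}), then contract back, so that $(c_1\wz c_1)^m$ with $2^m\sim 1+|l|$ produces the polynomial bound. The only difference is organizational: you derive (i) as a corollary of (ii) by writing $Q_{jk}=Q_{j0}+k\,\ell(Q_{j0})$ and then handling $\phi(Q_{j0})\ls c_1^j$ separately, whereas the paper proves (i) directly by the analogous three-step chain centered at $c_{Q_{jk}}$; this is a harmless (and arguably slightly cleaner) repackaging, not a different method.
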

\begin{proof}
We first prove (i).
For any $j\in\zz_+$ and $k\in\zz^n$, let $\delta_k\in\zz_+$ be such that
$2^{\delta_k}\le n(|k|+1)< 2^{\delta_k+1}$ and $c_{Q_{jk}}$
the center of the cube
$Q_{jk}$.
Then, by the conditions (\textbf{S1}) and (\textbf{S2}) of $\phi$ and the fact that
$|c_{Q_{jk}}|\le n2^{-j}(|k|+1)$,
we see that
\begin{eqnarray*}
\phi(Q_{jk})
&=&\phi(Q(c_{Q{jk}},2^{-j}))
\le c_1^{\delta_k+1}\phi(Q(c_{Q_{jk}},2^{-j+\delta_k+1}))\\
&\le& c_2c_1^{\delta_k+1}\phi(Q(0,2^{-j+\delta_k+1}))
\ls c_1^{j+\delta_k}(\wz c_1)^{\delta_k}\phi(Q(0,1))\\
&\ls& 2^{j\log_2c_1}(|k|+1)^{\log_2(c_1\wz c_1)},
\end{eqnarray*}
which completes the proof of (i).

Next we show (ii). If $|l|\le1$, namely, $\ell(Q)|l|\le \ell(Q)$,
then, by the condition (\textbf{S2}) of $\phi$,
we find that
\begin{equation}\label{cube1}
c_2^{-1}\le\frac{\phi(Q+l\ell(Q))}{\phi(Q)}\le c_2.
\end{equation}
If $|l|>1$, namely, $\ell(Q)|l|> \ell(Q)$,
then there exists a $\gamma_l\in\nn$ such that
$2^{\gamma_l}\le|l|<2^{\gamma_l+1}$.
Thus, $\ell(Q)2^{\gamma_l+1}>\ell(Q)|l|$.
From the condition (\textbf{S1}) of $\phi$, we deduce that
\begin{eqnarray*}
\phi(Q+l\ell(Q))
&=&\phi(Q(c_Q+l\ell(Q),\ell(Q)))\\
&\le& c_1^{\gamma_l+1}\phi(Q(c_Q+l\ell(Q),\ell(Q)2^{\gamma_l+1}))
\end{eqnarray*}
and
$$\phi(Q)=\phi(Q(c_Q,\ell(Q)))
\ge \lf(\frac1{\wz c_1}\r)^{\gamma_l+1}\phi(Q(c_Q,\ell(Q)2^{\gamma_l+1})),$$
which, combined with the condition (\textbf{S2}) of $\phi$, implies that
\begin{eqnarray*}
\frac{\phi(Q+l\ell(Q))}{\phi(Q)}
&\le&\frac{c_1^{\gamma_l+1}\phi(Q(c_Q+l\ell(Q),\ell(Q)2^{\gamma_l+1}))}
{(\wz c_1)^{-\gamma_l-1}\phi(Q(c_Q,\ell(Q)2^{\gamma_l+1}))}\\
&\le& c_2c_1^{\gamma_l+1}(\wz c_1)^{\gamma_l+1}
\sim|l|^{\log_2(c_1\wz c_1)}.
\end{eqnarray*}
This, together with \eqref{cube1}, then finishes the proof of (ii) and hence
 Lemma \ref{l-esti-cube}.
\end{proof}

\begin{lemma}\label{l-esti-cube-1}
Let $p(\cdot)\in C^{\log}(\rn)$.
 Then there exists a positive
constant $C$ such that, for all dyadic cubes $Q_{jk}$
with $j\in\zz_+$ and $k\in\zz^n$,
\begin{eqnarray}\label{esti-cube-x}
\frac1C2^{-\frac n{p_-}j}(1+|k|)^{n(\frac1{p_+}-\frac 1{p_-})}
&\le& \|\chi_{Q_{jk}}\|_{L^{p(\cdot)}(\rn)}\noz\\
&\le& C2^{-\frac n{p_+}j}(1+|k|)^{n(\frac1{p_-}-\frac 1{p_+})}.
\end{eqnarray}
\end{lemma}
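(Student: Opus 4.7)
My plan is to prove both inequalities directly from the Luxemburg--Nakano definition of the $\vlp$-norm: it suffices to produce scalars $\lambda_{\pm}$ equal to the claimed upper and lower bounds and to check that the associated modular integrals $\int_{Q_{jk}}\lambda_{\pm}^{-p(x)}\,dx$ are at most $1$ and at least $1$, respectively. The only ambient facts needed are $|Q_{jk}|=2^{-jn}\le 1$ (which is where $j\in\zz_+$ enters in an essential way), the global pointwise bounds $p_-\le p(x)\le p_+$ almost everywhere, and the nonnegativity $1/p_--1/p_+\ge 0$.

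For the upper estimate I would set $\lambda_+:=2^{-jn/p_+}(1+|k|)^{n(1/p_--1/p_+)}$, so that on $Q_{jk}$ one has $\lambda_+^{-p(x)}=2^{jnp(x)/p_+}(1+|k|)^{-np(x)(1/p_--1/p_+)}$. The first factor is at most $2^{jn}$ since $p(x)\le p_+$ and $j\ge 0$, while the second is at most $1$ since its exponent is nonpositive and $1+|k|\ge 1$; multiplying by $|Q_{jk}|=2^{-jn}$ gives a modular integral $\le 1$, hence $\|\chi_{Q_{jk}}\|_{\vlp}\le\lambda_+$. For the lower estimate, the mirror choice $\lambda_-:=2^{-jn/p_-}(1+|k|)^{n(1/p_+-1/p_-)}$ combined with the reversed pointwise inequalities $p(x)/p_-\ge 1$ and $p(x)(1/p_--1/p_+)\ge 0$ gives $\lambda_-^{-p(x)}\ge 2^{jn}$ on $Q_{jk}$, whence the modular is $\ge 1$ and the standard comparison with the Luxemburg norm (using that the modular is strictly decreasing in $\lambda$ because $p_->0$) yields $\|\chi_{Q_{jk}}\|_{\vlp}\ge\lambda_-$.

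There is essentially no obstacle here: the entire argument is bookkeeping with the signs of various exponents, and the constant $C$ can in fact be taken to be $1$. The only point that deserves attention is the role of $j\ge 0$: it is what forces the comparisons $2^{jnp(x)/p_+}\le 2^{jn}$ and $2^{jnp(x)/p_-}\ge 2^{jn}$ to go in the correct directions, so extending the estimate to $j\in\zz$ would require a genuinely different argument. It is also worth noting that the $C^{\log}$ hypothesis on $p$ is not actually used at this level of precision; it would only be needed for the finer variant $\|\chi_{Q_{jk}}\|_{\vlp}\sim|Q_{jk}|^{1/p(x_{Q_{jk}})}$, free of the $(1+|k|)$-distortion.
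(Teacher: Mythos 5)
Your proof is correct, and it takes a genuinely different and more elementary route than the paper. The paper's argument proceeds by enclosing $Q_{jk}$ in the dilate $2(1+|k|)Q_{00}$ of the unit cube and then invoking a cube-comparison lemma (cited from an external reference) of the form $\|\chi_{Q_1}\|_{\vlp}\lesssim(|Q_1|/|Q_2|)^{1/p_+}\|\chi_{Q_2}\|_{\vlp}$ and its reverse for nested cubes, together with $\|\chi_{Q_{00}}\|_{\vlp}\sim 1$; the two-sided estimate then drops out after two applications. Your argument instead verifies the desired two-sided bound directly from the Luxemburg modular, testing the candidate scalars $\lambda_{\pm}$ and exploiting nothing beyond $|Q_{jk}|=2^{-jn}$, $j\ge 0$, $p_-\le p(x)\le p_+$, and the nonnegativity of $1/p_--1/p_+$. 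That is cleaner: it is self-contained (no appeal to an external cube-comparison lemma), it yields the sharper constant $C=1$, and, as you correctly point out, it makes transparent that the $C^{\log}(\rn)$ hypothesis is never used at this level of precision — only $0<p_-\le p_+<\infty$ is needed. Your remark on the essential role of $j\ge 0$ in making the exponent comparisons go the right way is also apt, and your note on why a finer estimate of the form $\|\chi_{Q_{jk}}\|_{\vlp}\sim|Q_{jk}|^{1/p(x_{Q_{jk}})}$ would genuinely require log-H\"older continuity is a worthwhile observation that clarifies the scope of the lemma.
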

\begin{proof}
Let $Q_{00}$ be the dyadic cube $Q_{jk}$ with $j=0$
and $k=(0,\dots,0)\in \zz^n$.
For any $j\in\zz_+$ and $k\in\zz^n$, it is easy to see
that $Q_{jk}\subset 2(1+|k|)Q_{00}$.
Then, from \cite[Lemma 2.6]{zyl14}, we deduce that
\begin{eqnarray*}
\|\chi_{Q_{jk}}\|_{\vlp}
&\ls&\lf[\frac{|Q_{jk}|}{|2(1+|k|)Q_{00}|}\r]^{\frac1{p_+}}
\|\chi_{2(1+|k|)Q_{00}}\|_{\vlp}\\
&\ls&\lf[\frac{|Q_{jk}|}{|2(1+|k|)Q_{00}|}\r]^{\frac1{p_+}}
\lf[\frac{|2(1+|k|)Q_{00}|}{|Q_{00}|}\r]^{\frac1{p_-}}\|\chi_{Q_{00}}\|_{\vlp}\\
&\sim&2^{-\frac n{p_+}j}(1+|k|)^{n(\frac1{p_-}-\frac1{p_+})}
\end{eqnarray*}
and, similarly,
\begin{eqnarray*}
\|\chi_{Q_{jk}}\|_{\vlp}
\gs2^{-\frac n{p_-}j}(1+|k|)^{n(\frac1{p_+}-\frac 1{p_-})},
\end{eqnarray*}
which completes the proof of \eqref{esti-cube-x} and
 hence Lemma \ref{l-esti-cube-1}.
\end{proof}

In what follows, for $h\in\cs(\rn)$ and $M\in\zz_+$, let
$$\|h\|_{\cs_M(\rn)}
:=\sup_{|\gamma|\le M}\sup_{x\in\rn}|\partial^\gamma h(x)|(1+|x|)^{n+M+\gamma}.$$

\begin{proof}[Proof of Lemma \ref{l-estimate3}]
To prove this lemma, it suffices to show that there exists
an $M\in\nn$ such that, for all
$t\in f_{p(\cdot),q(\cdot)}^{s(\cdot),\phi}(\rn)$ and $h\in\cs(\rn)$,
$$|\la T_\psi t,h\ra|\ls \|t\|_{f_{p(\cdot),q(\cdot)}^{s(\cdot),\phi}(\rn)}
\|h\|_{\cs_M(\rn)}.$$
Indeed, by Remark \ref{r-vlpp}(iv), we see that, for any $Q\in\cq^\ast$,
\begin{eqnarray*}
|t_Q|
&=&\|t_Q\chi_Q\|_{L^{p(\cdot)}(Q)}\|\chi_Q\|_{L^{p(\cdot)}(Q)}^{-1}\\
&\le&\lf\|\lf\{\sum_{\gfz{\wz Q\subset Q}{\wz Q\in\cq^\ast}}
\lf[|\wz Q|^{-\frac{s(\cdot)}{n}-\frac12}|t_{\wz Q}|\chi_{\wz Q}\r]^{q(\cdot)}
\r\}^{\frac1{q(\cdot)}}\r\|_{L^{p(\cdot)}(Q)}
\|\chi_Q\|_{L^{p(\cdot)}(Q)}^{-1}
|Q|^{\frac{s_-}n+\frac12}\\
&\le&\|t\|_{f_{p(\cdot),q(\cdot)}^{s(\cdot),\phi}(\rn)}
\|\chi_Q\|_{L^{p(\cdot)}(Q)}^{-1}\phi(Q)|Q|^{\frac{s_-}n+\frac12},
\end{eqnarray*}
which implies that
\begin{eqnarray*}
|\la T_\psi t,h\ra|
&\le& \sum_{\ell(Q)=1}|t_Q||\la \Psi_Q,h\ra|+\sum_{\ell(Q)<1}|t_Q|
|\la\psi_Q,h\ra|\\
&\le& \|t\|_{f_{p(\cdot),q(\cdot)}^{s(\cdot),\phi}(\rn)}
\lf\{\sum_{\ell(Q)=1}\|\chi_Q\|_{L^{p(\cdot)}(Q)}^{-1}\phi(Q)
|\la \Psi_Q,h\ra|\r.\\
&&\hs\hs+\lf.\sum_{\ell(Q)<1}|Q|^{\frac{s_-}n+\frac12}
\|\chi_Q\|_{L^{p(\cdot)}(Q)}^{-1}\phi(Q)
|\la \psi_Q,h\ra|\r\}=:{\rm I}_1+{\rm I}_2.
\end{eqnarray*}

Let $M\in\nn$ be such that
$$M>2\max\lf\{\log_2(c_1\wz c_1)+n\lf(\frac 1{p_-}-\frac1{p_+}\r)+\frac n2,\,
\log_2 c_1+\frac n{p_-}-s_--2n\r\}.$$
Then, for I$_1$, by Lemmas \ref{l-esti-cube}
 and \ref{l-esti-cube-1}, we find that
\begin{eqnarray*}
{\rm I}_1
&\le& \|f\|_{\tlve}\sum_{k\in\zz^n}\|\chi_{Q_{0k}}\|_{L^{p(\cdot)}
(Q_{{0k}})}^{-1}\phi(Q_{0k})
|\la \Psi_{Q_{0k}},h\ra|\\
&\ls& \|f\|_{\tlve}\|h\|_{\cs_M(\rn)}
\sum_{k\in\zz^n} (1+|k|)^{n(\frac1{p_-}-\frac1{p_+})+\log_2(c_1\wz c_1)-n-M}\\
&\ls& \|f\|_{\tlve}\|h\|_{\cs_M(\rn)}.
\end{eqnarray*}
On the other hand, for I$_2$, by \cite[Lemma 2.4]{ysiy} and
Lemmas \ref{l-esti-cube} and \ref{l-esti-cube-1}, we conclude that
\begin{eqnarray*}
{\rm I}_2
&\le& \|f\|_{\tlve}\|h\|_{\cs_M(\rn)}\sum_{j=1}^\fz\sum_{k\in\zz^n}
2^{-j(s_-+\frac n2-\log_2c_1-\frac n{p_-})}2^{-jM}\\
&&\hs\hs\times(1+|k|)^{n(\frac1{p_-}-\frac1{p_+})+\log_2(c_1\wz c_1)}
\frac1{(1+|2^{-j}k|)^{n+M}}\\
&\ls& \|f\|_{\tlve}\|h\|_{\cs_M(\rn)},
\end{eqnarray*}
which, together with the estimate of ${\rm I}_1$, implies that
$$|\la T_\psi t,h\ra|\ls \|t\|_{f_{p(\cdot),q(\cdot)}^{s(\cdot),\phi}(\rn)}
\|h\|_{\cs_M(\rn)}.$$
Thus,
$$T_\psi t=\sum_{{Q\in\cq^\ast,\,\ell(Q)=1}}t_Q\Psi_Q
+\sum_{{Q\in\cq^\ast,\,\ell(Q)<1}}t_Q\psi_Q$$
converges in $\cs'(\rn)$ and
$T_\psi:\ \tlve\to\cs'(\rn)$ is continuous, which completes the
 proof of Lemma \ref{l-estimate3}.
\end{proof}

For a sequence $t=\{t_Q\}_{Q\in\cq^\ast}\subset \cc$, $r\in(0,\fz)$
and $\lz\in(0,\fz)$,
let
$$(t_{r,\lz}^\ast)_Q:=\lf\{\sum_{{R\in \cq^\ast,\,\ell(R)=\ell(Q)}}
\frac{|t_R|^r}{[1+\ell(R)^{-1}|x_R-x_Q|]^\lz}\r\}^{\frac1r},\quad
Q\in\cq^\ast,$$
and $t_{r,\lz}^\ast:=\{(t_{r,\lz}^\ast)_Q\}_{Q\in\cq^\ast}$.

We have the following estimates.

\begin{lemma}\label{l-estimate1}
Let $p,\ q$, $s$ and $\phi$ be as in Definition \ref{d-tl},
$r\in(0,\min\{p_-,q_-\})$
and $$\lz\in(n+C_{\log}(s)+r\log_2(c_1\wz c_1),\fz),$$
where $c_1$ and $\wz c_1$ are as in the condition {\textbf{(S1)}}
and $C_{\log}(s)$ is as in \eqref{ve1} with $g$ replaced by $s$.
Then there exists a constant $C\in[1,\fz)$ such that,
for all $t\in f_{p(\cdot),q(\cdot)}^{s(\cdot),\phi}(\rn)$,
$$\|t\|_{f_{p(\cdot),q(\cdot)}^{s(\cdot),\phi}(\rn)}
\le \|t_{r,\lz}^\ast\|_{f_{p(\cdot),q(\cdot)}^{s(\cdot),\phi}(\rn)}
\le C\|t\|_{f_{p(\cdot),q(\cdot)}^{s(\cdot),\phi}(\rn)}.$$
\end{lemma}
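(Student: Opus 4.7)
The first inequality $\|t\|_{\tlve} \le \|t^*_{r,\lambda}\|_{\tlve}$ is immediate: the defining sum for $(t^*_{r,\lambda})_Q$ contains the term with $R = Q$, so $|t_Q| \le (t^*_{r,\lambda})_Q$ for every $Q \in \cq^{\ast}$, and the lattice property of the sequence space recorded in Remark \ref{r-lattice}(i) transfers this pointwise bound to the quasi-norms.

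For the reverse inequality, the plan is to fix a dyadic cube $P \in \cq$ and tile $\rn$ by the family of dyadic cubes $\{P_l := P + l\ell(P) : l \in \zz^n\}$ congruent to $P$. For each $Q \subset P$ with $\ell(Q) = 2^{-j}$, I will split the defining sum of $(t^*_{r,\lambda})_Q^r$ according to which $P_l$ contains the summand $R$. The heart of the argument is a pointwise bound: for $x \in Q$,
\begin{align*}
|Q|^{-[\frac{s(x)}{n}+\frac12]r}\sum_{\substack{R \subset P_l \\ \ell(R) = 2^{-j}}} \frac{|t_R|^r}{[1+2^j|x_R - x_Q|]^\lambda}
\ls (1+|l|)^{-\lambda_0}\, \eta_{j,\mu} * f_{j,l}^r(x),
\end{align*}
where $f_{j,l}(y) := \sum_{R \subset P_l, \ell(R) = 2^{-j}} |R|^{-s(y)/n - 1/2}|t_R|\chi_R(y)$ and $\eta_{j,\mu}(y) := 2^{jn}(1+2^j|y|)^{-\mu}$, the parameters being chosen so that $\mu > n$ and $\lambda_0 + \mu + rC_{\log}(s) \le \lambda$. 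For $l \ne 0$ the $(1+|l|)^{-\lambda_0}$ factor is extracted from the denominator using $|x_R - x_Q| \gs |l|\ell(P)$; for $l = 0$ this factor is simply $1$. The log-H\"older continuity of $s$ enters through the inequality $2^{jr[s(x)-s(y)]} \ls (1+2^j|x-y|)^{rC_{\log}(s)}$, which is absorbed into the kernel exponent.

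Next I would raise both sides to $q(\cdot)/r$, sum over $j \ge \max\{j_P,0\}$, take the $L^{p(\cdot)/r}(P)$-quasi-norm (legitimate since $r < \min\{p_-, q_-\}$ guarantees $p(\cdot)/r > 1$ and $q(\cdot)/r > 1$), and move the $l$-sum outside by Minkowski in both the $\ell^{q(\cdot)/r}$ and the $L^{p(\cdot)/r}(P)$ directions. The vector-valued convolution inequality from Lemma \ref{l-estimate2}, applied with exponents $p(\cdot)/r$ and $q(\cdot)/r$, then removes the $\eta_{j,\mu}$ convolution because $\mu > n$. Undoing the $r$-th power and using the definition of the sequence space applied to each $P_l$, this yields
\begin{align*}
\frac{1}{\phi(P)}\bigg\|\bigg(\sum_{Q \subset P,\,Q\in\cq^{\ast}}[|Q|^{-[\frac{s(\cdot)}{n}+\frac12]}(t^*_{r,\lambda})_Q\chi_Q]^{q(\cdot)}\bigg)^{\frac{1}{q(\cdot)}}\bigg\|_{L^{p(\cdot)}(P)}
\ls \|t\|_{\tlve} \bigg(\sum_{l\in\zz^n} (1+|l|)^{-\lambda_0} \lf[\frac{\phi(P_l)}{\phi(P)}\r]^r\bigg)^{\frac{1}{r}}.
\end{align*}
Lemma \ref{l-esti-cube}(ii) supplies $\phi(P_l)/\phi(P) \ls (1+|l|)^{\log_2(c_1\wz c_1)}$, and the resulting sum $\sum_l (1+|l|)^{-\lambda_0 + r\log_2(c_1\wz c_1)}$ converges provided $\lambda_0$ dominates $n + r\log_2(c_1\wz c_1)$, a choice that fits inside $\lambda_0 + \mu + rC_{\log}(s) \le \lambda$ because of the hypothesis $\lambda > n + C_{\log}(s) + r\log_2(c_1\wz c_1)$.

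The main obstacle is the careful bookkeeping of exponents through the Minkowski and convolution steps: three decay budgets must be balanced simultaneously---the $\mu > n$ required by Lemma \ref{l-estimate2}, the $rC_{\log}(s)$ needed to absorb the log-H\"older factor into the $\eta$-kernel, and the decay in $l$ required to offset the $\phi$-doubling growth $(1+|l|)^{\log_2(c_1\wz c_1)}$. The distribution of the $r$-th power between the $l$-summation and the vector-valued inequality has to be arranged so that the single hypothesized lower bound on $\lambda$ is sufficient; the sharpness comes from interpreting the log-H\"older and the $\phi$-doubling contributions on the same footing within the same $r$-th power normalization.
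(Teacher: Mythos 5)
Your proof is correct, and it follows a route that differs from the paper's in one organizationally significant way. The paper begins by splitting $t = v + u$, where $v_Q := t_Q$ for $Q \subset 3P$ and $u := t - v$; the near part $v$ is then dispatched by invoking the known $\phi \equiv 1$ estimate $\|v^\ast_{r,\lambda}\|_{f_{p(\cdot),q(\cdot)}^{s(\cdot),1}(\rn)} \lesssim \|v\|_{f_{p(\cdot),q(\cdot)}^{s(\cdot),1}(\rn)}$ from the proof of Theorem 3.11 in Diening--H\"ast\"o--Roudenko, combined with the doubling condition \textbf{(S1)} to relate the norms on $3P$ and $P$, while only the far part $u$ is handled by a direct tiling argument (via the sets $A(i,k,P)$ with $|k| \ge 2$). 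You, by contrast, carry out the tiling over all translates $P_l = P + l\ell(P)$ from the outset and treat $l = 0$, $|l| = 1$ and $|l| \ge 2$ uniformly, so that you never need to appeal to the $\phi \equiv 1$ result as a black box. The core ingredients are identical in both arguments (the log-H\"older absorption of Lemma~\ref{l-eta}, the vector-valued convolution inequality of Lemma~\ref{l-estimate2} with exponents $p(\cdot)/r$ and $q(\cdot)/r$, the Minkowski inequality to move the $l$-sum outside, and the doubling bound $\phi(P_l)/\phi(P) \lesssim (1+|l|)^{\log_2(c_1\wz c_1)}$ from Lemma~\ref{l-esti-cube}(ii)); what you buy by not splitting is a more self-contained proof, while the paper's split allows it to reuse already-proved machinery and localize the genuinely new difficulty (the $\phi$-dependence) to the far terms.

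Two small remarks on precision. First, the geometric bound $|x_R - x_Q| \gtrsim |l|\ell(P)$ that you quote for $l \neq 0$ is only valid for $|l| \ge 2$: for $|l| = 1$ the cubes $Q \subset P$ and $R \subset P_l$ can have centers arbitrarily close. This is harmless for your argument, since the prefactor $(1+|l|)^{-\lambda_0}$ you extract is $O(1)$ when $|l| \le 1$ and the convolution bound goes through without it; you should just state the two regimes $|l| \le 1$ and $|l| \ge 2$ separately. Second, when you bookkeep the decay budget (with $\mu > n$ for the convolution, $rC_{\log}(s)$ to absorb the log-H\"older factor, and $\lambda_0 > n + r\log_2(c_1\wz c_1)$ to dominate the $l$-sum against the $\phi$-growth), note that the paper also silently loses the extra decay factor $2^{-(j - j_P)\lambda_0}$ coming from $1 + 2^j|x_R - x_Q| \gtrsim 2^{j - j_P}|l|$; keeping it (as the paper does with its $2^{i(m+L-\lambda)}$ factor) costs nothing and makes the $j$-sum trivially convergent. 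Your allocation of the three budgets is on the same footing as the paper's, so this is not a defect of your proposal relative to the source.
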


To prove Lemma \ref{l-estimate1}, we need Lemma \ref{l-estimate2} below,
which is just \cite[Theorem 3.2]{dhr09} (the vector-valued convolution inequality)
and plays a key role throughout this article.
In what follows, for any $m\in(0,\fz)$ and $j\in\zz$, let
$$\eta_{j,m}(x):=2^{jn}(1+2^j|x|)^{-m},\quad x\in\rn.$$

\begin{lemma}\label{l-estimate2}
Let $p,\ q\in C^{\log}(\rn)$ satisfy $1<p_-\le p_+<\fz$ and $1<q_-\le q_+<\fz$.
Let $m\in(n,\fz)$. Then there exists a positive constant $C$ such that,
for all sequences $\{f_j\}_{j\in\nn}\subset{ L}_{\loc}^1(\rn)$,
$$\lf\|\lf\{\sum_{j\in\nn}|\eta_{j,m}\ast f_j|^{q(\cdot)}\r\}
^{\frac1{q(\cdot)}}\r\|_{L^{p(\cdot)}(\rn)}
\le C\lf\|\lf\{\sum_{j\in\nn}|f_j|^{q(\cdot)}\r\}^{\frac1{q(\cdot)}}
\r\|_{L^{p(\cdot)}(\rn)}.$$
\end{lemma}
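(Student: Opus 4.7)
My proof plan for this vector-valued convolution inequality follows the Diening--H\"ast\"o--Roudenko strategy, which circumvents the known failure of the Fefferman--Stein vector-valued maximal inequality on $L^{p(\cdot)}(\ell^{q(\cdot)}(\rn))$ when $q(\cdot)$ genuinely varies. The key insight is that each kernel $\eta_{j,m}$ is concentrated on the scale $2^{-j}$ with polynomial decay rate $m-n>0$, so that on balls of that radius the log-H\"older conditions allow one to freeze $p(\cdot)$ and $q(\cdot)$ at constant values with controlled error; there is no need (and in fact no way) to invoke the vector-valued Hardy--Littlewood maximal inequality.

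The first step is a dyadic-annulus decomposition of each convolution: since $\eta_{j,m}(y)\sim 2^{jn}2^{-km}$ for $|y|\sim 2^{k-j}$ and the corresponding ball has measure $\sim 2^{(k-j)n}$, one obtains
$$\eta_{j,m}\ast f_j(x)\le C\sum_{k=0}^\fz 2^{-k(m-n)}\,\frac{1}{|B(x,2^{k-j})|}\int_{B(x,2^{k-j})}|f_j(y)|\,dy,$$
so that $\eta_{j,m}\ast f_j(x)$ is a geometrically decaying sum of ball-averages of $|f_j|$, starting at the natural scale $2^{-j}$. The main step is then to plug this into the modular $\int_\rn(\sum_j|\eta_{j,m}\ast f_j(x)|^{q(x)})^{p(x)/q(x)}\,dx$, swap sums and integrals, and on each ball of radius $r=2^{k-j}$ use the log-H\"older bounds $|q(x)-q(y)|,|p(x)-p(y)|\le C_{\log}/\log(e+1/r)$ together with the at-infinity control $|p(x)-p_\fz|\le C/\log(e+|x|)$ to ``freeze'' the exponents. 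This exponent-freezing step reduces the problem on each ball to a constant-exponent H\"older / Jensen estimate, after which the geometric decay $2^{-k(m-n)}$ with $m>n$ lets one sum in $k$ and then in $j$. Whenever the lower bounds $p_-,q_->1$ are insufficient for the variable-exponent H\"older application, one first applies the $r$-trick \cite[Lemma A.6]{dhr09} in the form $\eta_{j,m}\ast f_j(x)\le C\,[\eta_{j,rm}\ast|f_j|^r(x)]^{1/r}$ with $r\in(0,1)$ and $rm>n$, reducing to the exponents $p/r,q/r$ (still in $C^{\log}(\rn)$) with comfortably large lower bounds.

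The main obstacle is the exponent-freezing step. In the constant-exponent Fefferman--Stein inequality one would simply dualize with $\ell^q(L^p)$; the absence of a useful dual for $\ell^{q(\cdot)}$ when $q(\cdot)$ varies forces the modular argument to be carried out by hand on each dyadic annulus. The $C^{\log}$ hypothesis in both its local and at-infinity forms is essential here precisely to ensure that the multiplicative constants arising from replacing $p(\cdot),q(\cdot)$ by their values at a fixed point remain bounded uniformly in the ball; without this, the series in $j$ and $k$ would diverge. The interplay between the polynomial decay $m>n$ of the kernel and the log-type oscillation of the exponents is what makes the hypotheses sharp.
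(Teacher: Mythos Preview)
The paper does not give its own proof of this lemma: it is stated as ``just \cite[Theorem 3.2]{dhr09}'' and quoted without argument. Your proposal is essentially a sketch of the Diening--H\"ast\"o--Roudenko proof of that theorem---the dyadic-annulus decomposition of $\eta_{j,m}\ast f_j$ into ball averages at scale $2^{k-j}$ with geometric weights $2^{-k(m-n)}$, followed by a modular-level estimate in which the log-H\"older conditions (local and at infinity) freeze $p(\cdot)$ and $q(\cdot)$ on each ball, is precisely their strategy. So you are not offering an alternative; you are reconstructing the cited result, and the outline is sound.

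One remark: the $r$-trick you mention at the end is not actually needed under the stated hypotheses $p_->1$, $q_->1$; it is a device for pushing the inequality down to $p_-,q_-\in(0,1]$ \emph{after} the present lemma is in hand (and that is how the paper uses it elsewhere). Within the proof of the lemma itself the lower bounds $p_-,q_->1$ already give enough room for the H\"older/Jensen step on each ball, so that last paragraph of your proposal can be dropped.
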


The following Lemma \ref{l-eta} is just \cite[Lemma 19]{kv12}
(see also \cite[Lemma 6.1]{dhr09}).
\begin{lemma}\label{l-eta}
Let $s\in C_{\rm loc}^{\log}(\rn)$ and $L\in [C_{\log}(s),\fz)$,
where $C_{\log}(s)$ is as in \eqref{ve1} with $g$ replaced by $s$. Then there
exists a positive constant $C$ such that, for all $x,\ y\in\rn$,
$m\in(0,\fz)$ and $v\in\zz_+$,
$$2^{vs(x)}\eta_{v,m+L}(x-y)\le C2^{vs(y)}\eta_{v,m}(x-y).$$
\end{lemma}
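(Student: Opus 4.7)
My plan is as follows. First I would note that, upon cancellation of the common factor $2^{vs(y)}\cdot 2^{vn}(1+2^v|x-y|)^{-(m+L)}$, the stated inequality is equivalent to the pointwise bound
$$2^{v(s(x)-s(y))}\le C(1+2^v|x-y|)^{L}$$
for all $x,y\in\rn$ and $v\in\zz_+$, with a constant $C$ independent of $m$. The case $v=0$ is trivial, so I would fix $v\ge 1$. The central algebraic observation is that, by expanding the product,
$$(1+2^v|x-y|)(e+1/|x-y|)\ge 2^v,$$
so that, setting $A:=\log(1+2^v|x-y|)\ge 0$ and $B:=\log(e+1/|x-y|)\ge 1$, one obtains the additive relation $A+B\ge v\log 2$.

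Next, I would apply the hypothesis $s\in C_{\rm loc}^{\log}(\rn)$ together with $L\ge C_{\log}(s)$ to estimate
$$v(s(x)-s(y))\log 2 \le v|s(x)-s(y)|\log 2 \le \frac{vL\log 2}{B}.$$
Since the logarithmic form of the goal reads $v(s(x)-s(y))\log 2\le LA + \log C$, it suffices to prove $\frac{vL\log 2}{B}\le LA+\log C$. In the case $B\ge v\log 2$, the left side is at most $L$ and $LA\ge 0$, so any $C\ge e^{L}$ works. Otherwise $B<v\log 2$, in which case $A\ge v\log 2-B>0$ by the key identity, and the task reduces to verifying
$$\frac{vL\log 2}{B}+LB\le Lv\log 2+\log C.$$
The function $f(B):=\frac{vL\log 2}{B}+LB$ is convex on $(0,\fz)$, so on the compact interval $B\in[1,v\log 2]$ its maximum is attained at an endpoint; I would then check directly that $f(1)=f(v\log 2)=vL\log 2+L$, which forces $\log C=L$ to suffice, completing the argument.

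The hard part will be handling the intermediate range $2^{-v}<|x-y|<1$, where the log-H\"older bound on $|s(x)-s(y)|$ depends nontrivially on $|x-y|$ and must be balanced against the algebraic factor $(1+2^v|x-y|)^L$. This balancing is made possible precisely by the additive identity $A+B\ge v\log 2$ combined with the convexity of $f$; the lower bound $B\ge 1$, which reflects the constant $e$ built into the log-H\"older definition, is essential, since otherwise $f(B)$ would blow up as $B\to 0^{+}$ and no uniform constant would exist.
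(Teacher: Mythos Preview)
Your argument is correct. The reduction to the scalar inequality $2^{v(s(x)-s(y))}\le C(1+2^v|x-y|)^{L}$ is exact, and your key observation $(1+2^v|x-y|)(e+1/|x-y|)\ge 2^v$ (valid for $|x-y|>0$, the case $x=y$ being trivial) gives the additive relation $A+B\ge v\log 2$ that drives the rest. The two-case split according to whether $B\ge v\log 2$ or $B<v\log 2$ is clean, and the convexity of $f(B)=\tfrac{vL\log 2}{B}+LB$ on $[1,v\log 2]$ indeed forces its maximum to occur at an endpoint, both of which evaluate to $vL\log 2+L$. Thus $C=e^{L}$ suffices uniformly in $x,y,m,v$ (the constant may of course depend on $L$ and $s$, which is allowed). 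One small remark: when $v=1$ the interval $[1,v\log 2]$ is empty since $\log 2<1$, so Case~2 is vacuous there and only Case~1 applies; this is consistent with your setup but worth stating explicitly.

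The paper itself does not prove this lemma; it simply records it as \cite[Lemma~19]{kv12} (see also \cite[Lemma~6.1]{dhr09}) and uses it as a black box. Your proof therefore supplies a self-contained argument where the paper relies on the literature. The underlying mechanism is the same as in those references---balancing the log-H\"older modulus of continuity against the polynomial weight via the elementary product inequality---but your presentation via the convexity of $f$ on $[1,v\log 2]$ is a tidy way to handle the intermediate range $2^{-v}<|x-y|<1$ without further case splitting.
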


\begin{proof}[Proof of Lemma \ref{l-estimate1}]
Notice that, for all $Q\in\cq^\ast$, $|t_Q|\le(t_{r,\lz}^\ast)_Q$.
This immediately implies that
$\|t\|_{f_{p(\cdot),q(\cdot)}^{s(\cdot),\phi}(\rn)}
\le \|t_{r,\lz}^\ast\|_{f_{p(\cdot),q(\cdot)}^{s(\cdot),\phi}(\rn)},$
since $\tlve$ is a quasi-Banach lattice (see Remark \ref{r-lattice}(i)).

Conversely, let $P$ be a given dyadic cube. For any $Q\in\cq^\ast$, let
$v_Q:=t_Q$ if $Q\subset 3P$ and $v_Q:=0$ otherwise, and let $u_Q:=t_Q-v_Q$.
Set $v:=\{v_Q\}_{Q\in\cq^\ast}$ and $u:=\{u_Q\}_{Q\in\cq^\ast}$.
Then, for all $Q\in\cq^\ast$, we have
\begin{equation}\label{estimate1-x}
(t_{r,\lz}^\ast)_Q
\ls(v_{r,\lz}^\ast)_Q+(u_{r,\lz}^\ast)_Q.
\end{equation}
From the proof of \cite[Theorem 3.11]{dhr09}, we deduce that,
for all $t\in f_{p(\cdot),q(\cdot)}^{s(\cdot),1}(\rn)$,
\begin{equation*}
\|t_{r,\lz}^\ast\|_{f_{p(\cdot),q(\cdot)}^{s(\cdot),1}(\rn)}
\ls\|t\|_{f_{p(\cdot),q(\cdot)}^{s(\cdot),1}(\rn)},
\end{equation*}
which implies that
\begin{eqnarray*}
{\rm I}_P
&:=&\frac1{\phi(P)}\lf\|\lf[\sum_{{Q\subset P,\,Q\in\cq^\ast}}
\lf\{|Q|^{-[\frac{s(\cdot)}n+\frac 12]}(v_{r,\lz}^\ast)_Q\chi_Q\r\}
^{q(\cdot)}\r]^{\frac1{q(\cdot)}}\r\|_{L^{p(\cdot)}(P)}\\
&\ls&\frac1{\phi(P)}\lf\|v_{r,\lz}^\ast
\r\|_{f_{p(\cdot),q(\cdot)}^{s(\cdot),1}(\rn)}
\ls\frac1{\phi(P)}\|v\|_{f_{p(\cdot),q(\cdot)}^{s(\cdot),1}(\rn)}\\
&\sim&\frac1{\phi(P)}\lf\|\lf[\sum_{{Q\subset 3P,\,Q\in\cq^\ast}}
\lf\{|Q|^{-[\frac{s(\cdot)}n+\frac12]}|t_Q|\chi_Q\r\}^{q(\cdot)}
\r]^{\frac1{q(\cdot)}}
\r\|_{L^{p(\cdot)}(3P)}.
\end{eqnarray*}
By this and the condition (\textbf{S1}) of $\phi$, we conclude that
\begin{equation}\label{estimate1-y}
\|v_{r,\lz}^\ast\|_{\tlve}\le\sup_{P\in\cq}{\rm I}_P\ls\|t\|_{f_{p(\cdot),
q(\cdot)}^{s(\cdot),\phi}(\rn)}.
\end{equation}

Next, we deal with $u$.
To this end, let, for $i\in\zz_+$ and $k\in\zz^n$,
$$A(i,k,P):=\lf\{\wz Q\in\cq^\ast:\ \ell(\wz Q)=2^{-i}\ell(P),
\ \wz Q\subset P+k\ell(P)\r\}.$$
Then, we have
\begin{eqnarray*}
{\rm J}_P
&:=&\frac1{\phi(P)}\lf\|\lf[\sum_{{Q\subset P,\,Q\in\cq^\ast}}
\lf\{|Q|^{-[\frac{s(\cdot)}{n}+\frac12]}
(u_{r,\lz}^\ast)_Q\chi_Q\r\}^{q(\cdot)}\r]^{\frac1{q(\cdot)}}
\r\|_{L^{p(\cdot)}(P)}\\
&=&\frac1{\phi(P)}\lf\|\lf\{\sum_{i=0}^\fz\sum_{\gfz{Q\subset P,Q\in\cq^\ast}
{\ell(Q)=2^{-i}\ell(P)}}
\Bigg(|Q|^{-[\frac{s(\cdot)}{n}+\frac12]}\r.\r.\\
&&\quad\times\lf.\lf.\lf.\lf\{\sum_{\gfz{k\in\zz^n}{|k|\ge2}}
\sum_{R\in A(i,k,P)}
\frac{|u_R|^{r}}
{[1+\ell(Q)^{-1}|x_R-x_Q|]^\lz}\r\}^{\frac1{r}}\chi_Q\r)
^{q(\cdot)}\r\}^{\frac1{q(\cdot)}}
\r\|_{L^{p(\cdot)}(P)}.
\end{eqnarray*}
Notice that, when $x\in Q$, $y\in R$ and $\ell(R)=\ell(Q)$,
$$1+[\ell(Q)]^{-1}|x-y|\sim 1+[\ell(Q)]^{-1}|x_Q-x_R|.$$
From this, we deduce that, for all $x\in Q$, $\mu\in(0,\fz)$,
$j\in\zz_+$ and $k\in\zz^n$,
\begin{eqnarray*}
\eta_{j_Q,\mu}
\ast\lf(\lf[\sum_{R\in A(i,k,P)}|u_R|\chi_R\r]^r\r)(x)
&=&\sum_{R\in A(i,k,P)}\int_{R}\frac{2^{nj_Q}|u_R|^r}
{(1+2^{j_Q}|x-y|)^{\mu}}\,dy\\
&\sim&\sum_{R\in A(i,k,P)}\frac{|u_R|^r}
{[1+\ell(Q)^{-1}|x_Q-x_R|]^{\mu}}.
\end{eqnarray*}
Since $$\lz>n+C_{\log}(s)+r\log_2(c_1\wz c_1),$$ it follows that
there exist $m\in(n,\fz)$ and $L\in(C_{\log}(s),\fz)$ such that
$$\lz>m+L+r\log_2(c_1\wz c_1).$$
Observe that, when $|k|\ge2$, $i\in\zz_+$, $Q\subset P$ with
$\ell(Q)=2^{-i}\ell(P)$
and $R\in A(i,k,P)$, $$1+[\ell(Q)]^{-1}|x_Q-x_R|\sim 2^i|k|.$$
Then, by Lemma \ref{l-eta}, we conclude that
\begin{eqnarray*}
{\rm J}_P
&\ls&\frac1{\phi(P)}\lf\|\lf\{\sum_{i=0}^\fz
\lf(\sum_{\gfz{\ell(Q)=2^{-i}\ell(P)}{Q\in\cq^\ast,\,Q\subset P}}
\lf\{\sum_{{k\in\zz^n,\,|k|\ge2}}(2^i|k|)^{m+L-\lz}
|Q|^{-[\frac{s(\cdot)}{n}+\frac12]r}\r.\r.\r.\r.\\
&&\quad\quad\times\lf.\lf.\lf.\lf.
\eta_{j_Q,m+L}\ast
\lf(\lf[\sum_{R\in A(i,k,P)}|u_R|\chi_R\r]^r\r)
\r\}^{\frac1{r}}\chi_Q
\r)^{q(\cdot)}\r\}^{\frac1{q(\cdot)}}\r\|_{L^{p(\cdot)}(P)}\\
&\ls&\frac1{\phi(P)}\lf\|\lf\{\sum_{i=0}^\fz
\lf[\sum_{{k\in\zz^n,\,|k|\ge2}}(2^i|k|)^{m+L-\lz}\r.\r.\r.\\
&&\quad\quad\times\lf.\lf.\lf.
\eta_{i+j_P,m}\ast
\lf(\lf\{\sum_{R\in A(i,k,P)}|R|^{-[\frac{s(\cdot)}{n}
+\frac12]}|u_R|\chi_R\r\}^r\r)
\r]^{\frac{q(\cdot)}{r}}
\r\}^{\frac1{q(\cdot)}}\r\|_{L^{p(\cdot)}(P)},
\end{eqnarray*}
which, combined with Remark \ref{r-vlpp}(i) and the fact that, for
all $d\in[0,1]$ and $\{\theta_j\}_j\subset\cc$,
\begin{equation}\label{simple-ineq}
\lf(\sum_{j}|\theta_j|\r)^d\le \sum_j|\theta_j|^d,
\end{equation}
implies that
\begin{eqnarray*}
{\rm J}_P&\ls&\frac1{\phi(P)}\lf\|\sum_{i=0}^\fz\sum_{{k\in\zz^n,\,
|k|\ge2}}(2^i|k|)^{m+L-\lz}\r.\\
&&\hs\times\lf.\eta_{i+j_P,m}\ast
\lf(\lf\{\sum_{R\in A(i,k,P)}|R|^{-[\frac{s(\cdot)}{n}
+\frac12]}|u_R|\chi_R\r\}^r\r)
\r\|_{L^{\frac{p(\cdot)}{r}}(P)}^{\frac1{r}}\\
&\ls&\frac1{\phi(P)}\lf\{\sum_{i=0}^\fz\sum_{{k\in\zz^n,\,
|k|\ge2}}(2^i|k|)^{m+L-\lz}\r.\\
&&\hs\times\lf.\lf\|\eta_{i+j_P,m}\ast
\lf(\lf\{\sum_{R\in A(i,k,P)}|R|^{-[\frac{s(\cdot)}{n}
+\frac12]}|u_R|\chi_R\r\}^r\r)
\r\|_{L^{\frac{p(\cdot)}{r}}(P)}\r\}^{\frac1{r}}.
\end{eqnarray*}
From this, $m\in(n,\fz)$, Lemma \ref{l-estimate2},
Lemma \ref{l-esti-cube}(ii) and $$\lz>m+L+r\log_2(c_1\wz c_1),$$ we deduce that
\begin{eqnarray*}
{\rm J}_P
&\ls&\frac1{\phi(P)}\lf\{\sum_{i=0}^\fz\sum_{\gfz{k\in\zz^n}{|k|\ge2}}
(2^i|k|)^{m+L-\lz}\lf\|\sum_{R\in A(i,k,P)}|R|^{-[\frac{s(\cdot)}{n}+\frac12]}
|u_R|\chi_R\r\|_{L^{p(\cdot)}(\rn)}^{r}\r\}^\frac1{r}\\
&\ls&\lf\{\sum_{i=0}^\fz\sum_{\gfz{k\in\zz^n}{|k|\ge2}}
(2^i|k|)^{m+L-\lz}\lf[\frac{\phi(P+k\ell(P))}{\phi(P)}\r]^{r}
\r\}^{\frac1{r}}
\|t\|_{f_{p(\cdot),q(\cdot)}^{s(\cdot),\phi}(\rn)}\\
&\ls&\lf\{\sum_{i=0}^\fz\sum_{\gfz{k\in\zz^n}{|k|\ge2}}
2^{i(m+L-\lz)}|k|^{m+L-\lz}|k|^{r\log (c_1\wz c_1)}\r\}^\frac1{r}
\|t\|_{f_{p(\cdot),q(\cdot)}^{s(\cdot),\phi}(\rn)}
\sim\|t\|_{f_{p(\cdot),q(\cdot)}^{s(\cdot),\phi}(\rn)}.
\end{eqnarray*}
This, together with the arbitrary of $P\in\cq$, further implies that
\begin{equation}\label{estimate1-z}
\|u_{r,\lz}^\ast\|_{\tlve}\ls\|t\|_{\tlve}.
\end{equation}

Combining \eqref{estimate1-x}, \eqref{estimate1-y} and \eqref{estimate1-z},
we conclude that
$$\|t_{r,\lz}^\ast\|_{f_{p(\cdot),q(\cdot)}^{s(\cdot),\phi}(\rn)}
\ls\|t\|_{f_{p(\cdot),q(\cdot)}^{s(\cdot),\phi}(\rn)},$$
which completes the proof of Lemma \ref{l-estimate1}.
\end{proof}

Now we give the proof of Theorem \ref{t-transform}.

\begin{proof}[Proof of Theorem \ref{t-transform}]
We first show that $S_\vz$ is bounded from $\btlve$ to $\tlve$.
Let $f\in\btlve$. From \cite[Lemma A.6]{dhr09} (the $r$-trick lemma)
and its proof, we deduce that,
for all $L\in[C_{\log}(s),\fz)$, $m\in(n+\log_2c_1,\fz)$,
$$r\in(0,\min\{1,p_-,q_-\})$$
 and
 $x\in Q:=Q_{jk}\in \cq^\ast$,
\begin{equation*}
\sup_{z\in Q}|\vz_j\ast f(z)|^r\ls 2^{jn}\sum_{l\in\zz^n}
\int_{Q_{j(k+l)}}(1+2^j|x-y|)^{-(2m+L)}|\vz_j\ast f(y)|^r\,dy,
\end{equation*}
which implies that
\begin{eqnarray}\label{vz1}
&&\|S_\vz f\|_{\tlve}\noz\\
&&\hs\hs\ls\sup_{P\in\cq}\frac1{\phi(P)}\lf\|\lf[\sum_{j=(j_P\vee 0)}^\fz
\Bigg\{\sum_{k\in\zz^n}2^{js(\cdot)}\r.\r.\noz\\
&&\hs\hs\hs\times\lf.\lf.\lf.\lf[2^{jn}\sum_{l\in\zz^n}\int_{Q_{j(k+l)}}
\frac{|\vz_j\ast f(y)|^r}{(1+2^j|\cdot-y|)^{2m+L}}\,dy\r]^{\frac1r}
\chi_{Q_{jk}}\r\}^{q(\cdot)}
\r]^{\frac1{q(\cdot)}}\r\|_{L^{p(\cdot)}(P)}\noz\\
&&\hs\hs\ls\sup_{P\in\cq}\frac1{\phi(P)}\lf\|\lf[\sum_{j=(j_P\vee 0)}^\fz
\Bigg\{\sum_{k\in\zz^n}2^{js(\cdot)}\sum_{l\in\zz^n}(1+|l|)^{-m}\r.\r.\noz\\
&&\hs\hs\hs\times\lf.\lf.\lf[\eta_{j,m+L}\ast
(|\vz_j\ast f\chi_{Q_{j(k+l)}}|^r)\r]\chi_{Q_{jk}}\Bigg\}^{\frac{q(\cdot)}r}\r]
^{\frac1{q(\cdot)}}\r\|_{L^{p(\cdot)}(P)},
\end{eqnarray}
where the last inequality comes from the fact that, when
$x\in Q_{jk}$ and $y\in Q_{j(k+l)}$, $$1+2^j|x-y|\sim 1+|l|.$$
Observe that, for any given $P\in\cq$, if $Q_{jk}\subset P$, then
$Q_{j(k+l)}\subset 3n|l|P$ for all $l\in\zz^n$. By this and \eqref{vz1},
we see that
\begin{eqnarray*}
\|S_\vz f\|_{\tlve}
\ls&&\sup_{P\in\cq}\frac1{\phi(P)}
\lf\|\lf[\sum_{j=(j_P\vee0)}^\fz\Bigg\{2^{jrs(\cdot)}\r.\r.\\
&&\!\times\!\lf.\lf.\lf.\lf[\sum_{l\in\zz^n}(1+|l|)^{-m}\eta_{j,m+L}\ast
(|\vz_j\ast f\chi_{3n|l|P}|^r)\r]\r\}^{\frac{q(\cdot)}r}\r]^{\frac1{q(\cdot)}}
\r\|_{L^{p(\cdot)}(P)},
\end{eqnarray*}
which, combined with the Minkowski inequality, Lemma \ref{l-eta}
and Remark \ref{r-vlpp}(i), further implies that
\begin{eqnarray*}
&&\|S_\vz f\|_{\tlve}\\
&&\hs\ls\sup_{P\in\cq}\frac1{\phi(P)}
\Bigg\{\sum_{l\in\zz^n}(1+|l|)^{-m}\\
&&\hs\hs\times\lf.\lf\|\lf(\sum_{j=(j_P\vee0)}^\fz
\lf[\eta_{j,m+L}\ast(|2^{js(\cdot)}\vz_j\ast f\chi_{3n|l|P}|^r)\r]
^{\frac{q(\cdot)}{r}}\r)^{\frac r{q(\cdot)}}
\r\|_{L^{\frac{p(\cdot)}{r}}(P)}\r\}^{\frac1r}.
\end{eqnarray*}
From this, Lemma \ref{l-estimate2}, $m\in(n+\log_2c_1,\fz)$ and
the condition (\textbf{S1}) of $\phi$,
it follows that
\begin{eqnarray*}
&&\|S_\vz f\|_{\tlve}\\
&&\hs\ls\sup_{P\in\cq}\frac1{\phi(P)}
\lf\{\sum_{l\in\zz^n}(1+|l|)^{-m}\lf\|
\lf[\sum_{j=(j_P\vee 0)}^\fz\lf(2^{js(\cdot)}|\vz_j\ast f|\r)^{q(\cdot)}
\r]^\frac{1}{q(\cdot)}\r\|_{L^{p(\cdot)}(3n|l|P)}^r\r\}^\frac1r\\
&&\hs\ls\|f\|_{\btlve}\lf[\sum_{l\in\zz^n}(1+|l|)^{-m}(1+|l|)^{\log_2c_1}
\r]^\frac1r\ls\|f\|_{\btlve},
\end{eqnarray*}
which implies that $S_\vz$ is bounded from $\btlve$ to $\tlve$.

By repeating the argument used in the proof of \cite[Theorem 2.1]{ysiy},
with \cite[Lemmas 2.7 and 2.8]{ysiy} therein replaced by Lemmas \ref{l-estimate3}
and \ref{l-estimate1} here, we conclude
that $T_\psi$ is bounded from $\tlve$ to $\btlve$, the details being omitted.
Finally, by the Calder\'on reproducing formula
(see, for example, \cite[Lemma 2.3]{ysiy}),
we know that $T_\psi\circ S_\vz$ is the
identity on $\btlve$, which completes the proof of Theorem \ref{t-transform}.
\end{proof}

\section{Several equivalent characterizations of $\btlve$\label{s3}}

In this section, we first establish molecular and atomic
characterizations for $\btlve$
via Sobolev embeddings.
Secondly, we characterize $\btlve$ in terms of the Peetre maximal function,
which is further applied to show that
$$\cs(\rn)\hookrightarrow \btlve\hookrightarrow\cs'(\rn)$$
and give out two equivalent quasi-norms of $\btlve$, which may be useful in applications.

For notational simplicity, in what follows, for all $Q\in\cq^\ast$,
let $\wz \chi_Q:=|Q|^{-1/2}\chi_Q$.

\begin{proposition}\label{p-se1}
Let $\phi$ be a set function as in Definition \ref{d-tl},
$s_0,\ s_1,\ p_0,\ p_1$ be measurable functions satisfying that,
for all $x\in\rn$,
$-\fz<s_1(x)\le s_0(x)<\fz$,
$0<p_0(x)\le p_1(x)<\fz$
and $$s_0(x)-\frac n{p_0(x)}=s_1(x)-\frac n{p_1(x)}.$$
Assume that $0<(p_0)_-\le (p_1)_-\le (p_1)_+<\fz$ and
$s_0,\ \frac 1{p_0}\in C_{\loc}^{\log}(\rn)$. If $q(x)=\fz$ for all $x\in\rn$
or $0<q_-\le q(x)<\fz$ for all $x\in\rn$, then
$$f_{p_0(\cdot),q(\cdot)}^{s_0(\cdot),\phi}(\rn)\hookrightarrow
f_{p_1(\cdot),q(\cdot)}^{s_1(\cdot),\phi}(\rn).$$
\end{proposition}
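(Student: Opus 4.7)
The plan is to reduce the embedding to a uniform bound on each dyadic cube and then invoke a Hedberg-type estimate combined with the maximal function bound on $L^{p_0(\cdot)}(\rn)$. First, fixing an arbitrary $P\in\cq$, I would aim to show
\[\mathcal{N}_P(t):=\frac1{\phi(P)}\Bigg\|\bigg\{\sum_{Q\subset P,\,Q\in\cq^\ast}\Big[|Q|^{-[s_1(\cdot)/n+1/2]}|t_Q|\chi_Q\Big]^{q(\cdot)}\bigg\}^{1/q(\cdot)}\Bigg\|_{L^{p_1(\cdot)}(P)}\ls\|t\|_{f_{p_0(\cdot),q(\cdot)}^{s_0(\cdot),\phi}(\rn)}\]
uniformly in $P\in\cq$. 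The Sobolev identity $s_0(x)-n/p_0(x)=s_1(x)-n/p_1(x)$ rewrites, for $x\in Q$ with $\ell(Q)=2^{-j}$,
\[|Q|^{-[s_1(x)/n+1/2]}|t_Q|=2^{-jn\alpha(x)}\,|Q|^{-[s_0(x)/n+1/2]}|t_Q|,\qquad\alpha(x):=\tfrac1{p_0(x)}-\tfrac1{p_1(x)}\ge 0.\]
Setting $H_j(x):=\sum_{Q\in\cq^\ast,\,\ell(Q)=2^{-j},\,Q\subset P}|Q|^{-[s_0(x)/n+1/2]}|t_Q|\chi_Q(x)$ and $A(x):=\{\sum_j[2^{-jn\alpha(x)}H_j(x)]^{q(x)}\}^{1/q(x)}$, the problem reduces to bounding $\|A\|_{L^{p_1(\cdot)}(P)}$ by $\phi(P)\,\|t\|_{f_{p_0(\cdot),q(\cdot)}^{s_0(\cdot),\phi}(\rn)}$.

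Note that the naive estimate $2^{-jn\alpha(x)}\le 1$ would only give $A(x)\le G(x)$, where $G(x):=\{\sum_j H_j(x)^{q(x)}\}^{1/q(x)}\chi_P(x)$ satisfies $\|G\|_{L^{p_0(\cdot)}(P)}\le\phi(P)\,\|t\|_{f_{p_0(\cdot),q(\cdot)}^{s_0(\cdot),\phi}(\rn)}$; this would then require the (generally false) embedding $L^{p_0(\cdot)}(P)\hookrightarrow L^{p_1(\cdot)}(P)$. Instead, the heart of the argument would be a Hedberg-type pointwise/modular inequality of the form
\[A(x)^{p_1(x)}\ls\bigl[\cm G(x)\bigr]^{p_0(x)},\quad x\in P,\]
or equivalently $A(x)\ls[\cm G(x)]^{p_0(x)/p_1(x)}$, where $\cm$ is the Hardy--Littlewood maximal function. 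To derive it, I would split the sum defining $A$ at a level-threshold $j^*(x)$, using the geometric decay $2^{-jn\alpha(x)}$ to control the high-frequency tail and a maximal-function-type bound for the low-frequency head, and then optimize the balance between the two pieces. Log-H\"older continuity of $s_0$ and $1/p_0$ (which transfers to $s_1$ and $1/p_1$ via the Sobolev identity) together with Lemma \ref{l-eta} would allow one to treat $s_0,s_1,p_0,p_1$ as essentially constant over each dyadic cube, which is what makes the exponent-swap in the above inequality meaningful.

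Once the Hedberg inequality is in place, the modular identity $\int_P[\cm G(x)]^{p_0(x)/p_1(x)\cdot p_1(x)}\,dx=\int_P[\cm G(x)]^{p_0(x)}\,dx$, the scalar $L^{p_0(\cdot)}(\rn)$-boundedness of $\cm$ (a special case of Lemma \ref{l-estimate2}), and a homogeneity/scaling argument would together give $\mathcal{N}_P(t)\ls\phi(P)^{-1}\|G\|_{L^{p_0(\cdot)}(P)}\ls\|t\|_{f_{p_0(\cdot),q(\cdot)}^{s_0(\cdot),\phi}(\rn)}$, finishing the argument. The case $q(\cdot)\equiv\fz$ is handled analogously by replacing the inner $\ell^{q(\cdot)}$-sum with a supremum. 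The main obstacle is the Hedberg inequality itself: the selection of the auxiliary function $G$ and the threshold $j^*(x)$ must be coordinated with the variable exponents, and the log-H\"older control has to be carefully propagated through Lemma \ref{l-eta} to produce a genuine modular transition from $L^{p_1(\cdot)}$ to $L^{p_0(\cdot)}$.
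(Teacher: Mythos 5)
Your localization step matches the paper's: both approaches fix $P\in\cq$ and reduce the matter to a Sobolev-type inequality on that cube. Where you diverge is what happens next. The paper observes that if $u_Q:=t_Q\chi_{\{Q\subset P\}}$, then
\[
\phi(P)\,\mathcal{N}_P(t)=\|u\|_{f_{p_1(\cdot),q(\cdot)}^{s_1(\cdot),1}(\rn)},
\]
so the desired bound is \emph{exactly} the $\phi\equiv1$ Sobolev embedding $f_{p_0(\cdot),q(\cdot)}^{s_0(\cdot),1}(\rn)\hookrightarrow f_{p_1(\cdot),q(\cdot)}^{s_1(\cdot),1}(\rn)$ applied to $u$; this is Vyb\'iral's Theorem 3.1 in \cite{vj09}, which the paper simply cites. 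You instead attempt to re-derive that embedding from scratch via a Hedberg-type argument. This is a legitimate strategy (it is in fact the standard technique behind such results), but you leave the crucial pointwise/modular inequality $A(x)\ls[\cm G(x)]^{p_0(x)/p_1(x)}$ unestablished — you flag it yourself as ``the main obstacle.'' As written this is a gap, not a proof.

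There is also a concrete technical flaw in your sketch: you invoke ``the scalar $L^{p_0(\cdot)}(\rn)$-boundedness of $\cm$ (a special case of Lemma \ref{l-estimate2})'', but Lemma \ref{l-estimate2} requires $1<p_-$, whereas here the hypotheses allow $0<(p_0)_-\le1$. To make a Hedberg argument rigorous in this range one must apply $\cm$ to an $r$-th power with $r\in(0,\min\{1,(p_0)_-\})$ and work in $L^{p_0(\cdot)/r}$, which forces the threshold optimization and the log-H\"older bookkeeping to be redone with the extra parameter $r$ in play. If your goal is a self-contained proof rather than a reduction, you would essentially be reproducing the proof of \cite[Theorem 3.1]{vj09}; otherwise, the clean route is the one the paper takes: truncate $t$ to $u$ supported in $P$ and invoke that theorem directly.
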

\begin{proof}
Let $t:=\{t_{Q}\}_{Q\in\cq^\ast}\in
f_{p_0(\cdot),q(\cdot)}^{s_0(\cdot),\phi}(\rn)$.
We need to prove
\begin{equation*}
\|t\|_{f_{p_1(\cdot),q(\cdot)}^{s_1(\cdot),\phi}(\rn)}
\ls\|t\|_{f_{p_0(\cdot),q(\cdot)}^{s_0(\cdot),\phi}(\rn)}.
\end{equation*}
To this end, let $P\in \cq$ be any given dyadic cube.
For all $Q\in\cq^\ast$, let $u_{Q}:=t_Q$ when $Q\subset P$
and $u_Q:=0$ otherwise. Then, by the Sobolev embedding theorem
(\cite[Theorem 3.1]{vj09}), namely,
$
f_{p_0(\cdot),q(\cdot)}^{s_0(\cdot),1}(\rn)\hookrightarrow
f_{p_1(\cdot),q(\cdot)}^{s_1(\cdot),1}(\rn),
$
we conclude that
\begin{eqnarray*}
&&\lf\|\lf\{\sum_{j=(j_P\vee 0)}^\fz\sum_{k\in\zz^n}
\lf[2^{js_1(\cdot)}|t_{Q_{jk}}|\wz \chi_{Q_{jk}}\r]^{q(\cdot)}
\r\}^{\frac 1{q(\cdot)}}
\r\|_{L^{p_1(\cdot)}(P)}\\
&&\hs=\lf\|\lf\{\sum_{j=0}^\fz\sum_{k\in\zz^n}
\lf[2^{js_1(\cdot)}| u_{Q_{jk}}|\wz\chi_{Q_{jk}}\r]^{q(\cdot)}
\r\}^{\frac 1{q(\cdot)}}
\r\|_{L^{p_1(\cdot)}(\rn)}
=\|u\|_{f_{p_1(\cdot),q(\cdot)}^{s_1(\cdot),1}(\rn)}\\
&&\hs\ls\|u\|_{f_{p_0(\cdot),q(\cdot)}^{s_0(\cdot),1}(\rn)}
\sim\lf\|\lf\{\sum_{j=0}^\fz\sum_{k\in\zz^n}
\lf[2^{js_0(\cdot)}|u_{Q_{jk}}|\wz \chi_{Q_{jk}}\r]^{q(\cdot)}
\r\}^{\frac 1{q(\cdot)}}
\r\|_{L^{p_0(\cdot)}(\rn)}\\
&&\hs\sim\lf\|\lf\{\sum_{j=(j_P\vee 0)}^\fz\sum_{k\in\zz^n}
\lf[2^{js_0(\cdot)}|t_{Q_{jk}}|\wz \chi_{Q_{jk}}\r]^{q(\cdot)}
\r\}^{\frac 1{q(\cdot)}}
\r\|_{L^{p_0(\cdot)}(P)},
\end{eqnarray*}
which implies that
\begin{eqnarray*}
\|t\|_{f_{p_1(\cdot),q(\cdot)}^{s_1(\cdot),\phi}(\rn)}
&\ls&\sup_{P\in\cq}\frac1{\phi(P)}
\lf\|\lf\{\sum_{j=(j_P\vee 0)}^\fz\sum_{k\in\zz^n}
\lf[2^{js_0(\cdot)}|t_{Q_{jk}}|\wz \chi_{Q_{jk}}\r]^{q(\cdot)}
\r\}^{\frac 1{q(\cdot)}}
\r\|_{L^{p_0(\cdot)}(P)}\\
&\sim& \|t\|_{f_{p_0(\cdot),q(\cdot)}^{s_0(\cdot),\phi}(\rn)}.
\end{eqnarray*}
This finishes the proof of Proposition \ref{p-se1}
\end{proof}

\begin{proposition}\label{p-se2}
Let $\phi$ be a set function as in Definition \ref{d-tl},
$s_0,\ s_1,\ p_0,\ p_1$ be measurable functions satisfying that,
for all $x\in\rn$,
$-\fz<s_1(x)\le s_0(x)<\fz$, $0<p_0(x)\le p_1(x)<\fz$
and $$s_0(x)-\frac n{p_0(x)}=s_1(x)-\frac n{p_1(x)}.$$
Assume that $0<(p_0)_-\le (p_1)_+<\fz$,
$s_0,\ \frac 1{p_0}\in C_{\loc}^{\log}(\rn)$ and
$\inf_{x\in \rn}[s_0(x)-s_1(x)]>0.$
Then, for
all $q\in(0,\fz]$,
$$f_{p_0(\cdot),\fz}^{s_0(\cdot),\phi}(\rn)\hookrightarrow
f_{p_1(\cdot),q}^{s_1(\cdot),\phi}(\rn).$$
\end{proposition}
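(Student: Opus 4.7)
The plan is to follow the same strategy as in the proof of Proposition \ref{p-se1}, namely to reduce the estimate to the case $\phi\equiv 1$ by a truncation/localization argument and then to invoke a variable-exponent Franke--Jawerth type embedding in that reduced setting.

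Fix $t=\{t_Q\}_{Q\in\cq^\ast}\in f_{p_0(\cdot),\fz}^{s_0(\cdot),\phi}(\rn)$ and a dyadic cube $P\in\cq$. Introduce the truncated sequence $u:=\{u_Q\}_{Q\in\cq^\ast}$ by $u_Q:=t_Q$ when $Q\st P$ and $u_Q:=0$ otherwise. Since $u$ is supported only on cubes contained in $P$, a direct inspection of the sequence-space quasi-norms (using that only cubes $Q\st P$ enter the relevant sums and that $u$ vanishes outside $P$) yields
\[\lf\|\lf\{\sum_{\gfz{Q\st P}{Q\in\cq^\ast}}\lf[|Q|^{-\frac{s_1(\cdot)}n-\frac12}|t_Q|\chi_Q\r]^{q(\cdot)}\r\}^{\frac1{q(\cdot)}}\r\|_{L^{p_1(\cdot)}(P)}\le\|u\|_{f_{p_1(\cdot),q(\cdot)}^{s_1(\cdot),1}(\rn)}\]
and
\[\|u\|_{f_{p_0(\cdot),\fz}^{s_0(\cdot),1}(\rn)}\le\phi(P)\,\|t\|_{f_{p_0(\cdot),\fz}^{s_0(\cdot),\phi}(\rn)}.\]

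The heart of the argument is the $\phi\equiv 1$ Franke--Jawerth embedding
\[f_{p_0(\cdot),\fz}^{s_0(\cdot),1}(\rn)\hookrightarrow f_{p_1(\cdot),q(\cdot)}^{s_1(\cdot),1}(\rn),\]
which is available in the variable-exponent setting (see Vyb\'iral \cite{vj09}). It relies crucially on the strict gap $\ez_0:=\inf_{x\in\rn}[s_0(x)-s_1(x)]>0$: writing $2^{js_1(x)}=2^{-j(s_0(x)-s_1(x))}\cdot 2^{js_0(x)}$, the factor $2^{-j\ez_0 q(x)}$ produces a geometric decay that makes the $\ell^{q(\cdot)}$ summation over $j$ convergent, reducing it to an $\ell^{\fz}$ supremum to which the variable-exponent Sobolev embedding (the $\phi\equiv 1$ version of Proposition \ref{p-se1}) may be applied.

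Combining the three displayed inequalities gives
\[\lf\|\lf\{\sum_{\gfz{Q\st P}{Q\in\cq^\ast}}\lf[|Q|^{-\frac{s_1(\cdot)}n-\frac12}|t_Q|\chi_Q\r]^{q(\cdot)}\r\}^{\frac1{q(\cdot)}}\r\|_{L^{p_1(\cdot)}(P)}\ls\phi(P)\,\|t\|_{f_{p_0(\cdot),\fz}^{s_0(\cdot),\phi}(\rn)}.\]
Dividing both sides by $\phi(P)$ and taking the supremum over all dyadic cubes $P\in\cq$ then yields the desired embedding. The principal obstacle is the Franke--Jawerth step: Proposition \ref{p-se1} alone does not suffice because it preserves the third index $q$, whereas here one must simultaneously pass from $\ell^{\fz}$ to $\ell^{q(\cdot)}$ while performing the Sobolev shift along the line $s-n/p=\mathrm{const}$, and the strict gap hypothesis is precisely what enables this conversion.
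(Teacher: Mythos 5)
Your proposal follows exactly the paper's intended route: truncate the sequence to cubes inside a fixed dyadic cube $P$, invoke the variable-exponent Jawerth embedding $f_{p_0(\cdot),\fz}^{s_0(\cdot)}(\rn)\hookrightarrow f_{p_1(\cdot),q}^{s_1(\cdot)}(\rn)$ (which is precisely \cite[Theorem 3.2]{vj09}), and then divide by $\phi(P)$ and take the supremum over $P$ -- the paper itself proves this by saying ``similar to Proposition \ref{p-se1}, with \cite[Theorem 3.1]{vj09} replaced by \cite[Theorem 3.2]{vj09}.'' The only slip is notational: you write $q(\cdot)$ in several places although the statement fixes a constant $q\in(0,\fz]$, and your informal explanation of the geometric decay inside the Jawerth step is a heuristic for why Vyb\'iral's theorem holds rather than a step you need to perform.
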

The proof of Proposition \ref{p-se2} is similar to that of Proposition
\ref{p-se1}, with \cite[Theorem 3.1]{vj09} replaced by
\cite[Theorem 3.2]{vj09}, the details being omitted.
\begin{remark}
(i) When $\phi(Q)=1$ for all cube $Q$, the conclusions of
Propositions \ref{p-se1} and \ref{p-se2} are just
\cite[Theorem 3.1]{vj09} and \cite[Theorem 3.2]{vj09}, respectively.

(ii) When $p,\ q,\ s$ and $\phi$ are as in Remark \ref{r-defi}(ii),
Proposition \ref{p-se2}
goes back to \cite[Proposition 2.5]{ysiy}.
\end{remark}

Combining Theorem \ref{t-transform} and Proposition \ref{p-se1}, we immediately
obtain the following Corollary \ref{c-se1}, the details being omitted.

\begin{corollary}\label{c-se1}
Let $i\in\{0,1\}$, $p_i$, $q\in \cp(\rn)$ satisfy
$\frac1{p_i},\ \frac1{q}\in C^{\log}(\rn)$ and $s_i$ be measurable functions
satisfying $s_i\in C_{\loc}^{\log}(\rn)\cap L^\fz(\rn)$, and
$\phi$ a set function satisfying the conditions
(\textbf{S1}) and (\textbf{S2}).
Under the same assumptions as in Proposition \ref{p-se1},
the following conclusion
$$F_{p_0(\cdot),q(\cdot)}^{s_0(\cdot),\phi}(\rn)\hookrightarrow
F_{p_1(\cdot),q(\cdot)}^{s_1(\cdot),\phi}(\rn)$$
holds true.
\end{corollary}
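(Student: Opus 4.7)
The strategy is a standard ``lift-and-drop'' argument via the $\vz$-transform, reducing the function-space embedding to its sequence-space counterpart (Proposition \ref{p-se1}) already proved above. This is the natural approach because Theorem \ref{t-transform} tells us that $\btlve$ is isomorphic, as a quasi-Banach space, to a retract of its sequence-space model $\tlve$, so any continuous embedding established at the sequence-space level can be transferred back.

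First I would verify that the hypotheses of the corollary ensure Theorem \ref{t-transform} applies simultaneously to both triples $(p_0(\cdot),q(\cdot),s_0(\cdot))$ and $(p_1(\cdot),q(\cdot),s_1(\cdot))$: the requirements $p_i\in\cp(\rn)$ with $1/p_i\in C^{\log}(\rn)$, $q\in\cp(\rn)$ with $1/q\in C^{\log}(\rn)$, $s_i\in C_{\loc}^{\log}(\rn)\cap L^\fz(\rn)$, and the conditions (\textbf{S1}), (\textbf{S2}) on $\phi$ are all in force (and $C^{\log}(\rn)\subset C_{\loc}^{\log}(\rn)$, so the weaker hypothesis of Proposition \ref{p-se1} is also met). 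Fixing an admissible pair $(\vz,\Phi)$ together with $(\psi,\Psi)$ satisfying \eqref{cz1}, Theorem \ref{t-transform} produces bounded operators
$$S_\vz:\ F_{p_0(\cdot),q(\cdot)}^{s_0(\cdot),\phi}(\rn)\to f_{p_0(\cdot),q(\cdot)}^{s_0(\cdot),\phi}(\rn),\qquad T_\psi:\ f_{p_1(\cdot),q(\cdot)}^{s_1(\cdot),\phi}(\rn)\to F_{p_1(\cdot),q(\cdot)}^{s_1(\cdot),\phi}(\rn),$$
with the reproducing identity $T_\psi\circ S_\vz=\mathrm{Id}$ on $F_{p_0(\cdot),q(\cdot)}^{s_0(\cdot),\phi}(\rn)$.

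Next I would invoke Proposition \ref{p-se1}, whose hypotheses coincide with those carried by the corollary, to obtain the sequence-space embedding
$$f_{p_0(\cdot),q(\cdot)}^{s_0(\cdot),\phi}(\rn)\hookrightarrow f_{p_1(\cdot),q(\cdot)}^{s_1(\cdot),\phi}(\rn).$$
Then, for any $f\in F_{p_0(\cdot),q(\cdot)}^{s_0(\cdot),\phi}(\rn)$, writing $f=T_\psi(S_\vz f)$ and chaining the three bounded maps above yields
$$\|f\|_{F_{p_1(\cdot),q(\cdot)}^{s_1(\cdot),\phi}(\rn)}\ls \|S_\vz f\|_{f_{p_1(\cdot),q(\cdot)}^{s_1(\cdot),\phi}(\rn)}\ls \|S_\vz f\|_{f_{p_0(\cdot),q(\cdot)}^{s_0(\cdot),\phi}(\rn)}\ls \|f\|_{F_{p_0(\cdot),q(\cdot)}^{s_0(\cdot),\phi}(\rn)},$$
which is exactly the claimed continuous embedding.

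There is no genuine obstacle here: all the real analytical work has been packaged into Theorem \ref{t-transform} (whose proof absorbs the $r$-trick and the vector-valued convolution inequality) and Proposition \ref{p-se1} (which rests on the variable-exponent Sobolev embedding of Vyb\'iral). The only item requiring care is the bookkeeping that the regularity conditions in Definition \ref{d-tl} are met by both triples simultaneously, which is immediate from the hypotheses. This is precisely why the authors write ``the details being omitted'' right after the statement.
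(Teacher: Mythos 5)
Your argument is correct and is exactly what the paper intends: the authors state that the corollary follows immediately by combining Theorem \ref{t-transform} with Proposition \ref{p-se1}, and your lift-via-$S_\vz$, embed, drop-via-$T_\psi$ chain is precisely that combination made explicit. No differences to flag.
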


\begin{corollary}\label{c-se2}
Let $i\in\{0,1\}$, $p_i$, $q_i\in \cp(\rn)$ satisfy
$\frac1{p_i},\ \frac1{q_i}\in C^{\log}(\rn)$ and $s_i$ be measurable functions
satisfying $s_i\in C_{\loc}^{\log}(\rn)\cap L^\fz(\rn)$, and
$\phi$ a set function satisfying the conditions
(\textbf{S1}) and (\textbf{S2}).
Assume that, for all $x\in\rn$,
$$s_0(x)-\frac n{p_0(x)}=s_1(x)-\frac n{p_1(x)}$$
and $\inf_{x\in\rn}[s_0(x)-s_1(x)]>0$.
Then $$F_{p_0(\cdot),q_0(\cdot)}^{s_0(\cdot),\phi}(\rn)\hookrightarrow
F_{p_1(\cdot),q_1(\cdot)}^{s_1(\cdot),\phi}(\rn).$$
\end{corollary}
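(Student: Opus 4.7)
The plan is to reduce this corollary, which is a Jawerth-type embedding allowing both integrability exponents $q_0(\cdot)$ and $q_1(\cdot)$ to vary, to the sequence-space level via the $\vz$-transform and then chain together Proposition \ref{p-se2} with two elementary pointwise $\ell^q$-type inequalities. Since the strict gap hypothesis $\inf_{x\in\rn}[s_0(x)-s_1(x)]>0$ is exactly the one powering Proposition \ref{p-se2}, this reduction is what one expects to work.

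First I would invoke Theorem \ref{t-transform}: the boundedness of $S_\vz$ from each $F_{p_i(\cdot),q_i(\cdot)}^{s_i(\cdot),\phi}(\rn)$ into the corresponding sequence space, together with the boundedness of $T_\psi$ and the identity $T_\psi\circ S_\vz=\mathrm{Id}$, reduces the claimed embedding to the purely discrete statement
$$f_{p_0(\cdot),q_0(\cdot)}^{s_0(\cdot),\phi}(\rn)\hookrightarrow f_{p_1(\cdot),q_1(\cdot)}^{s_1(\cdot),\phi}(\rn).$$

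Next I would establish this discrete embedding through the three-step chain
$$f_{p_0(\cdot),q_0(\cdot)}^{s_0(\cdot),\phi}(\rn)\hookrightarrow f_{p_0(\cdot),\fz}^{s_0(\cdot),\phi}(\rn)\hookrightarrow f_{p_1(\cdot),(q_1)_-}^{s_1(\cdot),\phi}(\rn)\hookrightarrow f_{p_1(\cdot),q_1(\cdot)}^{s_1(\cdot),\phi}(\rn).$$
The first and third inclusions are consequences of the pointwise monotonicity of $\ell^{r}$-norms: for any nonnegative sequence $\{a_Q\}_{Q\in\cq^\ast}$ and any $r(\cdot)\in\cp(\rn)$,
$$\sup_Q a_Q\le\Big(\sum_Q a_Q^{r(x)}\Big)^{1/r(x)}\quad\text{and}\quad\Big(\sum_Q a_Q^{r(x)}\Big)^{1/r(x)}\le\Big(\sum_Q a_Q^{r_-}\Big)^{1/r_-}$$
hold for every $x$, with the latter being the standard $\ell^{r_-}\hookrightarrow\ell^{r(x)}$ inequality applied at each fixed $x$. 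Applying $\|\cdot\|_{L^{p_i(\cdot)}(P)}$ to these pointwise estimates and using the lattice property of $L^{p_i(\cdot)}$ (Remark \ref{r-vlpp}(iv)) yields both embeddings with constants independent of the dyadic cube $P$ in the supremum defining the $\phi$-norm. The middle inclusion is exactly Proposition \ref{p-se2}, applied with $q=(q_1)_-\in(0,\fz]$; its hypotheses—namely the relation $s_0(x)-n/p_0(x)=s_1(x)-n/p_1(x)$, the bounds $0<(p_0)_-\le(p_1)_+<\fz$, the regularity $s_0,\,1/p_0\in C_{\loc}^{\log}(\rn)$, and the strict gap $\inf_{x\in\rn}[s_0(x)-s_1(x)]>0$—are all provided by the present assumptions.

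The only subtlety, and it is not a real obstacle, is to make sure that the two pointwise $\ell^q$-monotonicity steps transfer cleanly to the $\phi$-normalized supremum over dyadic cubes $P$; this is immediate because the pointwise inequalities hold at each $x\in P$ and both sides of the resulting estimate share the same normalization factor $1/\phi(P)$. Applying Theorem \ref{t-transform} once more to pass back from sequence spaces to function spaces completes the argument, the remaining details being routine.
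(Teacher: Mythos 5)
Your proposal is correct and follows essentially the same route as the paper: the paper also proves the discrete chain
$f_{p_0(\cdot),q_0(\cdot)}^{s_0(\cdot),\phi}(\rn)\hookrightarrow f_{p_0(\cdot),\fz}^{s_0(\cdot),\phi}(\rn)\hookrightarrow f_{p_1(\cdot),(q_1)_-}^{s_1(\cdot),\phi}(\rn)\hookrightarrow f_{p_1(\cdot),q_1(\cdot)}^{s_1(\cdot),\phi}(\rn)$
using Proposition \ref{p-se2} for the middle inclusion and the elementary inequality \eqref{simple-ineq} (your pointwise $\ell^q$-monotonicity) for the two outer ones, then transfers the result to the function spaces via Theorem \ref{t-transform}.
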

\begin{proof}
By  Proposition \ref{p-se2} and \eqref{simple-ineq},
we see that
$$f_{p_0(\cdot),q_0(\cdot)}^{s_0(\cdot),\phi}(\rn)
\hookrightarrow f_{p_0(\cdot),\fz}^{s_0(\cdot),\phi}(\rn)
\hookrightarrow f_{p_1(\cdot),(q_1)_-}^{s_1(\cdot),\phi}(\rn)
\hookrightarrow f_{p_1(\cdot),q_1(\cdot)}^{s_1(\cdot),\phi}(\rn).$$
From this and Theorem \ref{t-transform}, we deduce that
$F_{p_0(\cdot),q_0(\cdot)}^{s_0(\cdot),\phi}(\rn)\hookrightarrow
F_{p_1(\cdot),q_1(\cdot)}^{s_1(\cdot),\phi}(\rn)$,
which completes the proof of Corollary \ref{c-se2}.
\end{proof}

Next we establish molecular and atomic characterizations of
Triebel-Lizorkin-type spaces with variable exponents.

\begin{definition}\label{d-atom}
Let $K\in\zz_+$, $L\in\zz$ and $R\in\nn$.
\begin{enumerate}
\item[(i)] A measurable function $m_Q$
on $\rn$ is called a $(K,L,R)$-\emph{smooth molecule} with $Q:=Q_{jk}\in\cq$,
where $j\in\zz$ and $k\in\zz^n$,
if it satisfies the following conditions:

\begin{enumerate}
\item[(M1)] (vanishing moment) when $j\in\nn$, $\int_\rn x^\gamma m_Q(x)\,dx=0$
for all $\gamma\in\zz_+^n$ and $|\gamma|< L$;

\item[(M2)] (smoothness condition) for all multi-indices $\alpha\in\zz_+^n$,
with $|\alpha|\le K$, and all $x\in\rn$
$$|D^\alpha m_Q(x)|\le 2^{(|\alpha|+n/2)j}(1+2^j|x-x_Q|)^{-R}.$$
\end{enumerate}

\item[(ii)] A measurable function $a_Q$
on $\rn$ is called a $(K,L)$-\emph{smooth atom} supported near
$Q:=Q_{jk}\in\cq$, where $j\in\zz$ and $k\in\zz^n$,
if it satisfies the following conditions:

\begin{enumerate}
\item[(A1)] supp\,$a_Q\subset 3Q$;

\item[(A2)] (vanishing moment) when $j\in\nn$, $\int_\rn x^\gamma a_Q(x)\,dx=0$
for all $\gamma\in\zz_+^n$ with $|\gamma|< L$;

\item[(A3)] (smoothness condition) for all multi-indices $\alpha\in\zz_+^n$
with $|\alpha|\le K$,
$$|D^\alpha a_Q(x)|\le 2^{(|\alpha|+n/2)j}\quad \textrm{for all}\quad x\in\rn.$$
\end{enumerate}
\end{enumerate}
\end{definition}

\begin{remark}\label{r-atom}
(i) If $L<0$, then the vanishing moment conditions (M1) and (A2) are avoid.

(ii) Let $a_Q$ be a $(K,L)$-smooth atom with $Q:=Q_{jk}\in\cq^\ast$
with $j\in\zz_+$ and $k\in\zz^n$.
Then, by combining the conditions (A1) and (A3),
 we conclude that, for all $R\in(0,\fz)$, $\alpha\in\zz_+^n$ with $|\alpha|\le K$
 and $x\in\rn$,
$$|D^\alpha a_Q(x)|\le C 2^{(|\alpha|+n/2)j}\frac1{(1+2^j|x-x_Q|)^R},$$
where $C$ is a positive constant independent of $x,\,\alpha,\,Q$ and $a_Q$,
but depending on $R$.
Thus, each $(K,L)$-smooth atom is a $(K,L,R)$-smooth
 molecule up to a harmless positive constant.
\end{remark}

\begin{theorem}\label{t-md}
Let $p$, $q$, $s$ and $\phi$ be as in Definition \ref{d-tl}.
\begin{enumerate}
\item[{\rm (i)}] Let
$K\in( s_++\max\{0,\log_2\wz c_1\},\fz)$ and
\begin{equation}\label{md7}
 L\in\lf(\frac n{\min\{1,p_-,q_-\}}-n-s_-,\fz\r).
\end{equation}
Suppose that $\{m_Q\}_{Q\in\cq^\ast}$ is a
family of $(K,L,R)$-smooth molecules with $R$ large enough
and that $t:=\{t_Q\}_{Q\in\cq^\ast}\in\tlve$. Then
$f:=\sum_{Q\in\cq^\ast}t_Qm_Q$
converges in $\cs'(\rn)$ and
$$\|f\|_{\btlve}\le C\|t\|_{\tlve}$$
with $C$ being a positive constant independent of $t$.

\item[{\rm (ii)}] Conversely, if $f\in\btlve$, then, for any given $K,\ L\in\zz_+$,
 there exist a sequence
$t:=\{t_Q\}_{Q\in\cq^\ast}\subset\cc$ and a sequence
$\{a_Q\}_{Q\in\cq^\ast}$ of $(K,L)$-smooth atoms such that
$f=\sum_{Q\in\cq^\ast}t_Q a_Q$ in $\cs'(\rn)$
and
$$\|t\|_{\tlve}\le C\|f\|_{\btlve}$$
with $C$ being a positive constant independent of $f$.
\end{enumerate}
\end{theorem}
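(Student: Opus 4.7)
The plan is to prove (i) by deriving sharp pointwise estimates on the convolutions $\vz_i \ast m_Q$ and then reducing the resulting double sum to a sequence-space convolution inequality via Lemma \ref{l-estimate2}, while (ii) will follow from the $\vz$-transform identity of Theorem \ref{t-transform} after decomposing each reconstructing kernel $\psi_Q$ into a locally finite sum of smooth atoms.

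For (i), the core estimate is a two-range Frazier-Jawerth type bound of the shape
$$|\vz_i \ast m_Q(x)| \ls 2^{-|i-j_Q|\min\{K,\,L+n\}}\, 2^{(i\wedge j_Q)n/2}\,\bigl(1+(2^i\wedge 2^{j_Q})|x-x_Q|\bigr)^{-R'},$$
which one obtains by playing the smoothness of $\vz$ against the vanishing moments (M1) of $m_Q$ when $i > j_Q$, and conversely the smoothness (M2) of $m_Q$ against the moment structure of $\vz$ from \eqref{x.1} when $i \le j_Q$ (the case $j_Q = 0$ with $\Phi$ in place of $\vz_0$ falls in the second regime and needs no moments of $\Phi$). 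Multiplying by $2^{is(\cdot)}$, transferring $s(\cdot)$ to $s(x_Q)$ via Lemma \ref{l-eta} at the cost of enlarging the decay exponent by $C_{\log}(s)$, and summing over $Q$, one recognizes a convolution of the form $\eta_{i\wedge j,m+L}\ast\bigl(\sum_{Q:\,j_Q=j}|Q|^{-s(\cdot)/n-1/2}|t_Q|\chi_Q\bigr)$ up to a summable factor $2^{-|i-j|\delta}$. The assumption $K > s_+ + \max\{0, \log_2 \wz c_1\}$ ensures absolute summability in the range $i \le j_Q$, while the lower bound \eqref{md7} is precisely what is needed so that, after the Sobolev embedding of Proposition \ref{p-se1} raises the indices $p(\cdot)$ and $q(\cdot)$ above $1$, Lemma \ref{l-estimate2} is applicable and delivers $\|f\|_{\btlve} \ls \|t\|_{\tlve}$. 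The convergence of $\sum_Q t_Q m_Q$ in $\cs'(\rn)$ is handled separately by pairing with a Schwartz test function and arguing exactly as in the proof of Lemma \ref{l-estimate3}.

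For (ii), the starting point is the identity $f = T_\psi \circ S_\vz f$ of Theorem \ref{t-transform}, which gives $f = \sum_{Q\in\cq^\ast} t_Q \psi_Q$ in $\cs'(\rn)$ with $t_Q := (S_\vz f)_Q$ and $\|t\|_{\tlve} \ls \|f\|_{\btlve}$. Since $\psi_Q$ is Schwartz but not compactly supported, I would fix a smooth partition of unity $\{\zeta_l\}_{l\in\zz^n}$ with $\zeta_l$ supported in $3(Q_{00}+l)$ and $\sum_l \zeta_l \equiv 1$, split $\psi_Q(\cdot) = \sum_{l\in\zz^n} \psi_Q(\cdot)\,\zeta_l(\ell(Q)^{-1}(\cdot - x_Q))$, and verify that, after a rapidly decaying normalization $c_{Q,l}^{-1}$ extracted from the Schwartz decay of $\psi$ (and using that $\psi$ has $L$ vanishing moments), each piece is a $(K,L)$-smooth atom supported in $3P$ with $P := Q + \ell(Q)l$. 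Reindexing by $P$ rewrites $f$ as $\sum_P s_P a_P$, and Lemma \ref{l-estimate1} applied with $\lambda$ large enough to absorb the $|l|$-decay of $c_{Q,l}$ yields $\|s\|_{\tlve} \ls \|t\|_{\tlve}$.

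The main obstacle I anticipate lies in part (i): calibrating the parameters $K$, $L$, $R'$ against $s_\pm$, $p_-$, $q_-$, $c_1$, $\wz c_1$ and $C_{\log}(s)$ so that every stray factor of the form $2^{-|i-j|\delta}$ or $(1+|l|)^{-N}$ produced by the kernel estimates and the translation from $L^{p(\cdot)}(P)$ to $L^{p(\cdot)}(c|l|P)$ remains summable against the growth of $\phi$ controlled by Lemma \ref{l-esti-cube}. This bookkeeping is exactly where the hypothesis on $K$ and the lower bound \eqref{md7} on $L$ are consumed, and where the combined use of Proposition \ref{p-se1} (to handle $p_-,q_-\le 1$) and Lemma \ref{l-estimate2} (the vector-valued convolution inequality) proves essential.
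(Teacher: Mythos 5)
Your approach matches the paper's in all essentials. For (i) the paper likewise uses two-regime pointwise molecule estimates — \cite[Lemma 3.3]{fj85} for the $j\ge j_Q$ regime, and an $\eta$-convolution bound of the form $|\vz_j\ast m_Q|\ls 2^{-\beta(j,j_Q)}|Q|^{-1/2}\,\eta_{j,R}\ast\eta_{j_Q,R}\ast\chi_Q$ with $\beta(j,v)=K\max\{j-v,0\}+L\max\{v-j,0\}$ for the rest — together with Lemma \ref{l-eta} to shift $s(\cdot)$, the $r$-trick plus Lemma \ref{l-estimate2}, and Proposition \ref{p-se1} in the convergence step; for (ii) the paper, like you, runs the Frazier--Jawerth partition-of-unity decomposition of $\psi_Q$ (via \cite[Theorem 3.3]{ysiy}) with Lemma \ref{l-estimate1} replacing the constant-exponent almost-diagonal lemma.

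Two small slips in your bookkeeping, however, are worth correcting. First, you have the two mechanisms attached to the wrong regimes: when the frequency $i$ is finer than the molecule scale (that is, $i>j_Q$) you must play the infinite vanishing moments of $\vz$ (coming from $\supp\wh\vz$ being bounded away from the origin) against the smoothness condition (M2) of $m_Q$, and this yields the $K$-decay; conversely, when $i<j_Q$ you use the vanishing moments (M1) of $m_Q$ against the smoothness of $\vz$ (or $\Phi$, which has no moments but needs none) to get the $L$-decay. Trying it your way — e.g.\ using (M1) of $m_Q$ against smoothness of $\vz_i$ when $i>j_Q$ — produces a factor $2^{+(i-j_Q)L}$, which grows rather than decays. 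Second, Proposition \ref{p-se1} does not ``raise $p(\cdot),q(\cdot)$ above $1$''; it leaves $q$ unchanged. It is the $r$-trick (choosing $r<\min\{1,p_-,q_-\}$ and passing to $p/r,q/r$) that puts you in the range where Lemma \ref{l-estimate2} applies, and Proposition \ref{p-se1} is then used only in Step 1 (convergence) to transfer the resulting $f$-norm from the auxiliary indices $(p/r,\ s+n(r-1)/p)$ back to $(p,s)$.
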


\begin{remark}\label{r-mad}
(i) When $\phi(P):=1$ for all cubes $P\subset\rn$, conclusions of Theorem \ref{t-md}
coincide with those of \cite[Corollary 5.6]{kempka10}; when $p,\ q$, $s$ and
$\phi$ are as in Remark
\ref{r-defi}(ii),
Theorem \ref{t-md} goes back to \cite[Theorem 3.12]{d13} (see also
\cite[Theorem 3.3]{ysiy}).

(ii) In the case that $\phi(P):=1$ for all cubes $P\subset\rn$ and $s\ge0$,
the vanishing moment and the smoothness conditions
of Theorem \ref{t-md}
can be further refined. Indeed, it was proved in
\cite[Theorem 3.11]{dhr09} that the vanishing moment
and the smoothness conditions of atoms can be localized on dyadic cubes
associated with atoms in Theorem \ref{t-md}.
\end{remark}

\begin{proof}[Proof of Theorem \ref{t-md}]
The proof of (ii) is similar to that of \cite[Theorme 3.3]{ysiy} (see also
\cite[Theorem 4.1]{fj90}). Indeed, by repeating the argument
that used in the proof of
\cite[Theorem 3.3]{ysiy}, with \cite[Lemma 2.8]{ysiy} therein replaced by
Lemma \ref{l-estimate1}, we can prove (ii), the details being omitted.

Next we prove (i) by two steps.

\emph{Step 1)} We show that $f=\sum_{Q\in\cq^\ast}t_Qm_Q$ converges in $\cs'(\rn)$.
To this end, it suffices to show that
\begin{equation}\label{3.1x}
\lim_{{N\to\fz,\,\Lambda\to\fz}}\sum_{j=0}^{N}\sum_{k\in\zz^n,\,|k|\le \Lambda}
t_{Q_{jk}}m_{Q_{jk}}
\end{equation}
exists in $\cs'(\rn)$. For all $h\in\cs(\rn)$ and $j\in\zz_+$,
by the vanishing moment condition (M1),
we see that
\begin{eqnarray*}
&&\int_\rn\sum_{k\in\zz^n,\,|k|\le \Lambda}t_{Q_{jk}}m_{Q_{jk}}(y)h(y)\,dy\\
&&\hs=\int_\rn\sum_{{k\in\zz^n,\,|k|\le \Lambda}}t_{Q_{jk}}m_{Q_{jk}}(y)
\lf[h(y)-\sum_{{\gamma\in\zz^n,\,|\gamma|< L}}
(y-x_{Q_{jk}})^\gamma\frac{D^\gamma h(x_{Q_{jk}})}{\gamma!}\r]\,dy
\end{eqnarray*}
and, by Taylor's remainder theorem, we find that, for all $y\in\rn$,
\begin{eqnarray*}
&&\lf|h(y)-\sum_{{\gamma\in\zz^n,\,|\gamma|\le L}}
(y-x_{Q_{jk}})^\gamma\frac{D^\gamma h(x_{Q_{jk}})}{\gamma!}\r|\\
&&\hs\ls|y-x_{Q_{jk}}|^{L}\sum_{|\gamma|=L}
\frac{|D^\gamma h(\xi(y))|}{\gamma !}\\
&&\hs\ls(1+|y-x_{Q_{jk}}|)^{L}(1+|y|)^{-\delta}
\sup_{\xi\in\rn}\sum_{|\gamma|=L}(1+|\xi|)^\delta
\frac{|D^\gamma h(\xi)|}{\gamma!},
\end{eqnarray*}
where $\delta\in(0,\fz)$ and
$\xi(y):=y+\theta(x_{Q_{jk}}-y)$ with some $\theta\in(0,1)$ depending on
$y$ and $x_{Q_{jk}}$, which, together with
the fact that, for all $y\in\rn$,
\begin{equation*}
|m_{Q_{jk}}(y)|\le 2^{jn/2}(1+2^j|y-x_{Q_{jk}}|)^{-R},
\end{equation*}
further implies that
\begin{eqnarray}\label{mdc2}
&&\lf|\int_\rn\sum_{{k\in\zz^n,\,|k|\le \Lambda}}
t_{Q_{jk}}m_{Q_{jk}}(y)h(y)\,dy\r|\noz\\
&&\hs\ls\int_\rn\sum_{{k\in\zz^n,\,|k|\le \Lambda}}|t_{Q_{jk}}|
2^{-j(L-n/2)}\frac{(1+|y|)^{-\delta}}
{(1+2^j|y-x_{Q_{jk}}|)^{R-L}}\,dy\noz\\
&&\hs\ls\sum_{v=0}^\fz\int_{D_v}\sum_{k\in\zz^n}|t_{Q_{jk}}|
2^{-j(L-n/2)}\frac{(1+|y|)^{-\delta}}
{(1+2^j|y-x_{Q_{jk}}|)^{R-L}}\,dy,
\end{eqnarray}
where $D_0:=\{x\in\rn:\ |x|\le1\}$ and, for all $v\in\nn$,
 $$D_v:=\{x\in\rn:\ 2^{v-1}<|x|\le 2^{v}\}.$$
For all $v\in\zz_+$, $y\in D_v$ and $j\in\zz_+$, let
$W_0^{y,j}:=\{k\in\zz^n:\ 2^{j}|y-x_{Q_{jk}}|\le 1\}$
and, for $i\in\nn$,
$$W_i^{y,j}:=\{k\in\zz^n:\ 2^{i-1}<2^{j}|y-x_{Q_{jk}}|\le 2^{i}\}.$$
Then we have
\begin{eqnarray*}
H(v,j,y)
&:=&\sum_{k\in\zz^n}|t_{Q_{jk}}||Q_{jk}|^{-\frac12}
(1+2^j|y-2^{-j}k|)^{-(R-L)}\noz\\
&\sim&\sum_{i=0}^\fz\sum_{ k\in W_i^{y,j}}2^{-i(R-L)}
|t_{Q_{jk}}||Q_{jk}|^{-\frac12}\noz\\
&\sim&\sum_{i=0}^\fz2^{-i(R-L)}
\int_{\cup_{\wz k\in W_i^{y,j}}Q_{j\wz k}}2^{jn}
\lf[\sum_{k\in\zz^n}|t_{Q_{jk}}|\wz \chi_{Q_{jk}}(z)\r]\,dz.
\end{eqnarray*}
Observe that, if $z\in\cup_{\wz k\in W_i^{y,j}}Q_{j\wz k}$,
then $z\in Q_{j\wz k_0}$ for some
$\wz k_0\in W_i^{y,j}$ and, for $y\in D_v$,
$1+2^j|y-z|\sim 1+2^i$; moreover,
\begin{eqnarray*}
|z|\le|z-x_{Q{j\wz k_0}}|+|x_{Q{j\wz k_0}}-y|+|y|
\ls2^{-j}+2^{-j+i}+2^{v}\ls 2^{i+v},
\end{eqnarray*}
which implies that
$$\bigcup_{\wz k\in W_i^{y,j}}Q_{j\wz k}
\subset Q(0,2^{i+v+c_0})$$ with some positive constant $c_0\in\nn$.
From this, we deduce that, for all $a\in(n,\fz)$, $v,\,j\in\zz_+$ and $y\in\rn$,
\begin{eqnarray*}
H(v,j,y)
&\sim&\sum_{i=0}^\fz2^{-i(R-L-a)}
\int_{\cup_{\wz k\in W_i^{y,j}}Q_{j\wz k}}\frac{2^{jn}}{(1+2^{j}|y-z|)^a}\\
&&\hs\hs\times\lf[\sum_{k\in\zz^n}|t_{Q_{jk}}|\wz \chi_{Q_{jk}}(z)
\chi_{Q(0,2^{i+v+c_0})}(z)\r]\,dz\\
&\ls& \sum_{i=0}^\fz2^{-i(R-L-a)}
\eta_{j,a}\ast\lf(\sum_{k\in\zz^n}|t_{Q_{jk}}|\wz \chi_{Q_{jk}}
\chi_{Q(0,2^{i+v+c_0})}\r)(y),
\end{eqnarray*}
which, combined with \eqref{mdc2}, implies that
\begin{eqnarray}\label{mdc-x}
&&\lf|\int_\rn\sum_{{k\in\zz^n,\,|k|\le \Lambda}}t_{Q_{jk}}
m_{Q_{jk}}(y)h(y)\,dy\r|\noz\\
&&\hs\ls2^{-jL}\sum_{v=0}^\fz
\sum_{i=0}^\fz(1+2^{v})^{-\delta_0}2^{-i(R-L-a)}\noz\\
&&\hs\hs\hs\times\int_{D_v}\eta_{j,a}
\ast\lf(\sum_{k\in\zz^n}|\wz t_{Q_{jk}}|\chi_{Q_{jk}}
\chi_{Q(0,2^{i+v+c_0})}\r)(y)(1+|y|)^{-\delta+\delta_0}\,dy,
\end{eqnarray}
where $\delta_0\in(0,\fz)$ is determined later.

By \eqref{md7}, we find that there exists $r\in(0,\min\{1,p_-,q_-\})$
such that $$s_-+\frac n{p_-}(r-1)>-L.$$
Let, for all $x\in\rn$,
$\wz p(x):=p(x)/r$, $(\wz p(x))^\ast:=\frac{\wz p(x)}{\wz p(x)-1}$
and $\wz s$ be a measurable function on $\rn$
such that, for all $x\in\rn$,
$$s(x)-\frac n{p(x)}=\wz s(x)-\frac n{\wz p(x)}.$$
Then
\begin{eqnarray*}
\wz s_-&=&\inf_{x\in\rn}\lf\{s(x)+\frac{n(r-1)}{p(x)}\r\}
\ge\inf_{x\in\rn}[s(x)]+\inf_{x\in\rn}\lf[\frac{n(r-1)}{p(x)}\r]\\
&=& s_-+\frac n{p_-}(r-1)>-L.
\end{eqnarray*}
Choosing $\delta\in(0,\fz)$ and $\delta_0\in(\max\{0,\log_2\wz c_1\},\fz)$ such that
$\delta\in(n(1-r)/p_++\delta_0,\fz)$, by the H\"older inequality
in Remark \ref{r-vlpp}(iii), \eqref{mdc-x},
Lemma \ref{l-estimate2}, Remark \ref{r-lattice}(ii)
and Proposition \ref{p-se1}, we conclude that
\begin{eqnarray*}
&&\lf|\int_\rn\sum_{{k\in\zz^n,\,|k|\le \Lambda}}t_{Q_{jk}}
m_{Q_{jk}}(y)h(y)\,dy\r|\\
&&\hs\ls2^{-j(L+\wz s_-)}\sum_{v=0}^\fz2^{-v\delta_0}
\sum_{i=0}^\fz2^{-i(R-L-a)}
\lf\|(1+|\cdot|)^{-\delta+\delta_0}\r\|_{L^{(\wz p(\cdot))^\ast}(\rn)}\\
&&\hs\hs\hs\times\lf\|\eta_{j,a}
\ast\lf[\sum_{k\in\zz^n}2^{j\wz s(\cdot)}|t_{Q_{jk}}|\wz \chi_{Q_{jk}}
\chi_{Q(0,2^{i+v+c_0})}\r]\r\|_{L^{\wz p(\cdot)}(\rn)}\\
&&\hs\ls2^{-j(L+\wz s_-)}\sum_{v=0}^\fz2^{-v\delta_0}
\sum_{i=0}^\fz2^{-i(R-L-a)}\lf\|\sum_{k\in\zz^n}2^{j\wz s(\cdot)}
|t_{Q_{jk}}|\wz \chi_{Q_{jk}}
\r\|_{L^{\wz p(\cdot)}(Q(0,2^{i+v+c_0}))}\\
&&\hs\ls2^{-j(L+\wz s_-)}\sum_{v=0}^\fz2^{-v\delta_0}
\sum_{i=0}^\fz2^{-i(R-L-a)} \phi(Q(0,2^{i+v+c_0}))
\|t\|_{f_{\wz p(\cdot),q(\cdot)}^{\wz s(\cdot),\phi}(\rn)}\\
&&\hs\ls2^{-j(L+\wz s_-)}\sum_{v=0}^\fz2^{-v(\delta_0-\log_2\wz c_1)}
\sum_{i=0}^\fz2^{-i(R-L-a-\log_2\wz c_1)}
\|t\|_{\tlve}\\
&&\hs\ls2^{-j(L+\wz s_-)}\|t\|_{\tlve},
\end{eqnarray*}
where $R\in(0,\fz)$ is chosen large enough,
which, together with $L>-\wz s_-$, implies that \eqref{3.1x}
exists in $\cs'(\rn)$ and
$|\la f,h\ra|\ls\|t\|_{\tlve}$.

\emph{Step 2}) We prove that
$\|f\|_{\btlve}\ls\|t\|_{\tlve}.$
Let $P\in\cq$ be a given dyadic cube and
$r\in(0,\min\{1,p_-,q_-\})$ such that
$L>n/r-n-s_-$. Then, by Remark \ref{r-vlpp}(i), we find that
\begin{eqnarray*}
&&\frac1{\phi(P)}\lf\|\lf\{\sum_{j=(j_P\vee 0)}^\fz
\lf[2^{js(\cdot)}|\vz_j\ast f|\r]^{q(\cdot)}\r\}^{\frac 1{q(\cdot)}}
\r\|_{L^{p(\cdot)}(P)}\\
&&\hs\ls\frac1{\phi(P)}\lf\|\lf\{\sum_{j=(j_P\vee 0)}^\fz
\lf[2^{js(\cdot)r}\sum_{v=0}^{(j_P\vee 0)-1}\sum_{\ell(Q)=2^{-v}}
|t_Q|^r|\vz_j\ast m_Q|^r\r]^{\frac{q(\cdot)}{r}}
\r\}^{\frac r{q(\cdot)}}\r\|_{L^{\frac{p(\cdot)}r}(P)}^{\frac1r}\\
&&\hs\hs+\frac1{\phi(P)}\lf\|\lf\{\sum_{j=(j_P\vee 0)}^\fz
\lf[2^{js(\cdot)r}\sum_{v=(j_p\vee 0)}^{\fz}\sum_{\ell(Q)=2^{-v}}
|t_Q|^r|\vz_j\ast m_Q|^r\r]^{\frac{q(\cdot)}{r}}
\r\}^{\frac r{q(\cdot)}}\r\|_{L^{\frac{p(\cdot)}r}(P)}^{\frac1r}\\
&&\hs=:{\rm I}_1+{\rm I}_2,
\end{eqnarray*}
here $\sum_{v=0}^{(j_P\vee 0)-1}\cdots=0$ if $j_P\le0$.

Observe that I$_1=0$ if $j_P\le0$.
Thus, to estimate I$_1$, we only need to assume $j_P\in\nn$.
By \cite[Lemma 3.3]{fj85} (see also \cite[Lemma 3.5]{kempka10}), we find that,
for all $Q:=Q_{vk}\in\cq^\ast$ with $v\le j$ and $x\in\rn$,
$$|\vz_j\ast m_Q(x)|\ls 2^{vn/2}2^{(v-j)K}(1+2^v|x-x_Q|)^{-R},$$
which, combined with \eqref{simple-ineq},
implies that
\begin{eqnarray}\label{md6}
{\rm I}_1
&\ls&\frac1{\phi(P)}\lf\|\lf\{\sum_{j=j_P}^\fz\sum_{v=0}^{j_P-1}
2^{js(\cdot)r}\sum_{k\in\zz^n}|t_{Q_{vk}}|^r|Q_{vk}|^{-\frac r2}\r.\r.\noz\\
&&\hs\hs\times\lf.2^{(v-j)Kr}(1+2^v|\cdot-x_{Q_{vk}}|)^{-Rr}\Bigg\}
\r\|_{L^\frac{p(\cdot)}r(P)}^\frac1r.
\end{eqnarray}

We claim that, for all
$v,\ j\in\zz_+$ and $x\in P$,
\begin{eqnarray*}
J(v,j,x,P):=&&2^{js(x)r}\sum_{k\in\zz^n}|t_{Q_{vk}}|^r|Q_{vk}|^{-\frac r2}
2^{(v-j)Kr}(1+2^v|x-x_{Q_{vk}}|)^{-Rr}\\
\ls&&2^{(v-j)(K-s_+)r}\sum_{i=0}^\fz2^{-i(M-a-\vez/r)r}\noz\\
&&\times\eta_{v,ar}\ast\lf(\lf[\sum_{k\in\Omega_i^{x,v}}
|t_{Q_{vk}}|2^{vs(\cdot)}
\wz \chi_{Q_{vk}}\chi_{Q(c_P,2^{i-v+c_0})}\r]^r\r)(x),\noz
\end{eqnarray*}
where $a\in(n/r,\fz)$, $\vez\in[C_{\log}(s),\fz)$, $c_P$ is the center of $P$,
$c_0\in\nn$ is a positive constant independent of $x,\ P,\ i,\ v,\ k$,
$\Omega_0^{x,v}:=\lf\{k\in\zz^n:\ 2^v|x-x_{Q_{vk}}|\le 1\r\}$
and, for all $i\in\nn$,
$$\Omega_i^{x,v}:=\lf\{k\in\zz^n:\ 2^{i-1}<2^v|x-x_{Q_{vk}}|\le 2^i\r\}.$$
Indeed, it is easy to see that
\begin{eqnarray}\label{md2}
J(v,j,x,P)
&&\ls2^{js(x)r}2^{(v-j)Kr}\sum_{i=0}^\fz\sum_{k\in\Omega_i^{x,v}}2^{-Rri}
|t_{Q_{vk}}|^r|Q_{vk}|^{-\frac r2}\noz\\
&&\sim2^{[js(x)+(v-j)K]r}\noz\\
&&\hs\times\sum_{i=0}^\fz
\lf\{2^{-Rri+vn}\int_{\cup_{\wz k\in\Omega_i^{x,v}}Q_{v\wz k}}
\lf[\sum_{k\in\Omega_i^{x,v}}|t_{Q_{vk}}|
\wz \chi_{Q_{vk}}(y)\r]^r\,dy\r\}.\qquad
\end{eqnarray}
Observe that, if $y\in \cup_{\wz k\in\Omega_i^{x,v}}Q_{v\wz k}$, then there
exists a $\wz k_0\in\Omega_i^{x,v}$ such that $y\in Q_{v\wz k_0}$ and
$1+2^v|x-y|\sim1+2^i;$ moreover, since $v\le j_P$, it follows that
\begin{eqnarray*}
|y-c_P|
&\le& |y-x_{Q_{v\wz k_0}}|+|x-x_{Q_{v\wz k_0}}|+|x-c_P|\\
&\ls& 2^{-v}+2^{i-v}+2^{-j_P}\ls 2^{i-v},
\end{eqnarray*}
which implies that
$\cup_{\wz k\in\Omega_i^{x,v}}Q_{v\wz k}\subset Q(c_P,2^{i-v+c_0})$ for some
constant $c_0\in\nn$. From this, \eqref{md2} and Lemma \ref{l-eta},
 we deduce that, for all $a\in(n/r,\fz)$, $v,\,j\in\zz_+$ and $x\in P$,
\begin{eqnarray*}
J(v,j,x,P)
&&\ls 2^{js(x)r}2^{(v-j)Kr}\sum_{i=0}^\fz2^{(a+\vez/r-R)ri}
\int_{\cup_{\wz k\in\Omega_i^{x,v}}Q_{v\wz k}}
\frac{2^{vn}}{(1+2^v|x-y|)^{ar+\vez}}\\
&&\hs\times\lf[\sum_{k\in\Omega_i^{x,v}}|t_{Q_{vk}}|\wz \chi_{Q_{vk}}
\chi_{Q(c_P,2^{i-v+c_0})}(y)\r]^r\,dy\\
&&\ls2^{(v-j)(K-s_+)r}\sum_{i=0}^\fz2^{(a+\vez/r-R)ri}\\
&&\hs\times\eta_{v,ar}\ast\lf(\lf[\sum_{k\in\Omega_i^{x,v}}2^{vs(\cdot)}|t_{Q_{vk}}|\wz \chi_{Q_{vk}}
\chi_{Q(c_P,2^{i-v+c_0})}\r]^r\r)(x),
\end{eqnarray*}
which implies that the claim holds true.

By this claim, \eqref{md6} and
Remark \ref{r-vlpp}(i), we conclude that
\begin{eqnarray*}
{\rm I}_1
&\ls&\frac1{\phi(P)}
\lf\{\sum_{j=j_P}^\fz\sum_{v=0}^{j_P-1}2^{(v-j)(K-s_+)r}
\sum_{i=0}^\fz2^{(a+\vez/r-M)ri}\r.\\
&&\hs\hs\lf.\times\lf\|\eta_{v,ar}\ast
\lf(\lf[\sum_{k\in\Omega_i^{\cdot,v}}| t_{Q_{vk}}|2^{vs(\cdot)}\wz\chi_{Q_{vk}}
\chi_{B(c_P,2^{i-v+c_0})}\r]^r\r)\r\|_{L^{\frac{p(\cdot)}r}(P)}\r\}^\frac1r,
\end{eqnarray*}
which, together with Lemma \ref{l-estimate2}, $j_P\in\nn$ and
Remark \ref{r-lattice}(ii), further implies that
\begin{eqnarray*}
{\rm I}_1
&\ls&\frac1{\phi(P)}
\lf\{\sum_{j=j_P}^\fz\sum_{v=0}^{j_P}2^{(v-j)(K-s_+)r}
\sum_{i=0}^\fz2^{(a+\vez/r-R)ri}\r.\\
&&\hs\lf.\times\lf\|
\sum_{k\in\zz^n}|t_{Q_{vk}}|2^{vs(\cdot)}\wz \chi_{Q_{vk}}
\r\|_{L^{{p(\cdot)}}(Q(c_P,2^{i-v+c_0}))}^r\r\}^\frac1r\\
&\ls&\|t\|_{\tlve}
\lf\{\sum_{j=j_P}^\fz\sum_{v=0}^{j_P}2^{(v-j)(K-s_+)r}\r.\\
&&\hs\times\lf.\sum_{i=0}^\fz2^{(a+\vez/r-R)ri}
\frac{[\phi(Q(c_P,2^{i-v+c_0}))]^r}
{[\phi(P)]^r}\r\}^\frac1r.
\end{eqnarray*}
From this, $K\in(s_++\max\{0,\log_2\wz c_1\},\fz)$ and the fact that, when $v\le j_P$,
\begin{eqnarray*}
\phi(Q(c_P,2^{i-v+c_0}))
&\ls& (\wz c_1)^i\phi(Q(c_P,2^{-v}))
\ls (\wz c_1)^{i+j_P-v}\phi(P)\\
&\sim& 2^{(i+j_P-v)\log_2\wz c_1}\phi(P),
\end{eqnarray*}
 we deduce that
\begin{eqnarray}\label{mdi1}
{\rm I}_1
&\ls&\|t\|_{\tlve}
\lf\{2^{j_P\log_2\wz c_1}\sum_{j=j_P}^\fz2^{-j(K-s_+)r}
\sum_{v=0}^{j_P}2^{v(K-s_+-\log_2\wz c_1)r}\r.\noz\\
&&\hs\hs\times\lf.
\sum_{i=0}^\fz2^{-i(R-a-\vez/r-\log_2\wz c_1)r}\r\}^\frac1r
\ls\|t\|_{\tlve},
\end{eqnarray}
where $R\in(0,\fz)$ is chosen such that $R>a+\vez/r+\log_2\wz c_1$.

We now estimate I$_2$.
By applying \cite[Lemmas A.2 and A.5]{dhr09} and
an argument similar to that used in the proof of \cite[Lemma 6.3]{dhr09},
we see that, for all $j\in\zz_+$, $Q:=Q_{vk}\in\cq^\ast$ and $x\in\rn$,
$$
|\vz_j\ast m_Q(x)|\ls 2^{-\beta(j,v)}|Q|^{-1/2}
(\eta_{j,R}\ast\eta_{v,R}\ast\chi_{Q})(x),
$$
where
$$\beta(j,v):=K\max\{j-v,0\}+L\max\{v-j,0\}.$$
Let $$M\in (n/r+\log_2(c_1\wz c_1),\fz)$$ and
$\vez\in[C_{\log}(s),\fz)$
be such that $R=2M+\vez/r$.
Thus, we have
\begin{eqnarray}\label{md3}
{\rm I}_2
&\ls&\frac1{\phi(P)}\lf\|\lf\{\sum_{j=(j_P\vee 0)}^\fz
\lf[\sum_{v=(j_P\vee 0)}^\fz\sum_{\ell(Q)=2^{-v}}
2^{js(\cdot)r}2^{-\beta(j,v)r}\r.\r.\r.\noz\\
&&\hs\hs\times\lf.\lf.|t_Q|^r|Q|^{-\frac r2}
(\eta_{j,2M+\vez/r}\ast\eta_{v,2M+\vez/r}\ast\chi_Q)^r\Bigg]^{\frac{q(\cdot)}r}
\r\}^{\frac r{q(\cdot)}}\r\|_{L^{p(\cdot)}(P)}^\frac1r.
\end{eqnarray}
By \cite[Lemma A.4]{dhr09}, we find that, for all
 $v\in\zz_+$ and $\ell(Q)=2^{-v}$,
\begin{eqnarray*}
&&2^{js(\cdot)r-\beta(j,v)r}(\eta_{j,2M+\vez/r}
\ast\eta_{v,2M+\vez/r}\ast\chi_Q)^r\\
&&\hs\ls2^{js(\cdot)r-\beta(j,v)r}2^{n\max\{v-j,0\}(1-r)}
\eta_{j,2Mr+\vez}\ast\eta_{v,2Mr+\vez}\ast\chi_Q\\
&&\hs\sim2^{vs(\cdot)r}2^{-(K-s_+)r\max\{j-v,0\}}\\
&&\hs\hs\hs\hs\times2^{-(L-\frac nr+n+s_-)r\max\{v-j,0\}}
\eta_{j,2Mr+\vez}\ast\eta_{v,2Mr+\vez}\ast\chi_Q,
\end{eqnarray*}
which, combined with \eqref{md3} and Lemma \ref{l-eta}, implies that
\begin{eqnarray}\label{md5}
{\rm I}_2
&\ls&\frac1{\phi(P)}
\lf\|\lf\{\sum_{j=(j_P\vee 0)}^\fz\lf(\eta_{j,2Mr}\ast
\lf[\sum_{v=(j_P\vee 0)}^\fz
\sum_{\ell(Q)=2^{-v}}|t_Q|^r|Q|^{-\frac r2}\r.\r.\r.\r.\noz\\
&&\hs\hs\times\lf.\lf.\lf.2^{vs(\cdot)r-\vez(j,v)r}
\eta_{v,2Mr+\vez}\ast\chi_Q\Bigg]\r)^{\frac{q(\cdot)}r}\r\}
^{\frac r{q(\cdot)}}\r\|_{L^{\frac{p(\cdot)}r}(P)}^{\frac1r}\noz\\
&\ls&\frac1{\phi(P)}\lf\|\lf\{\sum_{j=(j_P\vee 0)}^\fz
\lf[\sum_{l\in\zz^n}\int_{P+l\ell(P)}
\frac{2^{jn}}{(1+2^j|\cdot-y|)^{2Mr}}
\lf(\sum_{v=(j_P\vee 0)}^\fz2^{vs(\cdot)-\vez(j,v)}
\r.\r.\r.\r.\noz\\
&&\hs\hs\times\lf.\lf.\lf.\lf.\sum_{\ell(Q)=2^{-v}}
\lf[\frac{|t_Q|}{|Q|^{\frac 12}}\r]^r
\eta_{v,2Mr+\vez}\ast\chi_Q\r)(y)\,dy\r]
^{\frac{q(\cdot)}r}\r\}
^{\frac r{q(\cdot)}}\r\|_{L^{\frac{p(\cdot)}r}(P)}^{\frac1r},
\end{eqnarray}
where
$$\vez(j,v):=(K-s_+)\max\{j-v,0\}+(L-n/r+n+s_-)\max\{v-j,0\}.$$
From this, the fact that, when $j\ge j_P$, $l\in\zz^n$, $x\in P$
 and $y\in P+l\ell(P)$,
$$1+2^j|x-y|\ge 1+2^{j_P}|x-y|\sim 1+|l|,$$
the Minkowski inequality, Lemma \ref{l-estimate2} and
Remark \ref{r-vlpp}(i), we further deduce that
\begin{eqnarray}\label{md4}
{\rm I}_2
&\ls&\frac1{\phi(P)}
\lf\|\lf\{\sum_{j=(j_P\vee 0)}^\fz\lf[
\sum_{l\in\zz^n}(1+|l|)^{-Mr}\eta_{j,Mr}\ast
\lf(\lf[\sum_{v=(j_P\vee 0)}^\fz\sum_{\ell(Q)=2^{-v}}
|t_Q|^r\r.\r.\r.\r.\r.\noz\\
&&\hs\hs\times\lf.\lf.\lf.\lf.|Q|^{-\frac r2}2^{vs(\cdot)r-\vez(j,v)r}
\eta_{v,2Mr+\vez}\ast\chi_Q\Bigg]\chi_{P+l\ell(P)}\r)\r]^{\frac{q(\cdot)}r}\r\}
^{\frac r{q(\cdot)}}\r\|_{L^{\frac{p(\cdot)}r}(P)}^{\frac1r}\noz\\
&\ls&\frac1{\phi(P)}
\lf\|\sum_{l\in\zz^n}(1+|l|)^{-Mr}
\lf\{\sum_{j=(j_P\vee 0)}^\fz\lf[\eta_{j,Mr}\ast
\lf(\lf[\sum_{v=(j_P\vee 0)}^\fz\sum_{\ell(Q)=2^{-v}}
|t_Q|^r\r.\r.\r.\r.\r.\noz\\
&&\hs\hs\hs\times\lf.\lf.\lf.\lf.|Q|^{-\frac r2}2^{vs(\cdot)r-\vez(j,v)r}
\eta_{v,2Mr+\vez}\ast\chi_Q\Bigg]
\chi_{P+l\ell(P)}\r)\r]^{\frac{q(\cdot)}r}\r\}
^{\frac r{q(\cdot)}}\r\|_{L^{\frac{p(\cdot)}r}(P)}^{\frac1r}\noz\\
&\ls&\frac1{\phi(P)}
\lf\{\sum_{l\in\zz^n}(1+|l|)^{-Mr}\lf\|\lf[\sum_{j=(j_P\vee 0)}^\fz
\lf(\sum_{v=(j_P\vee 0)}^\fz\sum_{\ell(Q)=2^{-v}}
|t_Q|^r|Q|^{-\frac r2}\r.\r.\r.\r.\noz\\
&&\hs\hs\hs\times\lf.\lf.\lf.2^{vs(\cdot)r-\vez(j,v)r}
\eta_{v,2Mr+\vez}\ast\chi_Q\Bigg)^{\frac{q(\cdot)}r}
\r]^{\frac r{q(\cdot)}}\r\|_{L^{\frac{p(\cdot)}r}(P+l\ell(P))}\r\}^{\frac1r}.
\end{eqnarray}
By the H\"older inequality,
$$K\in(s_++\max\{0,\log_2\wz c_1\},\fz),$$
$L\in(n/r-n-s_-,\fz)$
and the fact that $0<q_-\le q_+<\fz$, we see that
\begin{eqnarray*}
&&\lf[\sum_{j=(j_P\vee 0)}^\fz
\lf(\sum_{v=(j_P\vee 0)}^\fz\sum_{\ell(Q)=2^{-v}}
|t_Q|^r|Q|^{-\frac r2}2^{vs(\cdot)r-\vez(j,v)r}
\eta_{v,2Mr+\vez}\ast\chi_Q\r)^{\frac{q(\cdot)}r}
\r]^{\frac r{q(\cdot)}}\\
&&\hs\ls\lf\{\sum_{j=(j_P\vee 0)}^\fz\sum_{v=(j_P\vee 0)}^\fz
2^{-\vez(j,v)r}\lf[\sum_{\ell(Q)=2^{-v}}
|t_Q|^r|Q|^{-\frac r2}2^{vs(\cdot)r}
\eta_{v,2Mr+\vez}\ast\chi_Q\r]^{\frac{q(\cdot)}r}\r\}^{\frac r{q(\cdot)}}\\
&&\hs\ls\lf\{\sum_{v=(j_P\vee 0)}^\fz\lf[\sum_{\ell(Q)=2^{-v}}
|t_Q|^r|Q|^{-\frac r2}2^{vs(\cdot)r}
\eta_{v,2Mr+\vez}\ast\chi_Q\r]^{\frac{q(\cdot)}r}\r\}^{\frac r{q(\cdot)}},
\end{eqnarray*}
which, together with \eqref{md4} and some arguments similar to
those used in the proofs of \eqref{md5} and \eqref{md4}, implies that
\begin{eqnarray*}
{\rm I}_2
&\ls&\frac1{\phi(P)}
\lf[\sum_{l\in\zz^n}(1+|l|)^{-Mr}\lf\|\lf\{\sum_{v=(j_P\vee 0)}^\fz
\lf[\sum_{\ell(Q)=2^{-v}}|t_Q|^r|Q|^{-\frac r2}\r.\r.\r.\r.\\
&&\hs\lf.\lf.\lf.\times2^{vs(\cdot)r}
\eta_{v,2Mr+\vez}\ast\chi_Q\Bigg]^{\frac{q(\cdot)}r}\r\}^{\frac r{q(\cdot)}}
\r\|_{L^{\frac{p(\cdot)}r}(P+l\ell(P))}\r]^{\frac1r}\\
&\ls&\frac1{\phi(P)}\lf\{\sum_{l\in\zz^n}(1+|l|)^{-Mr}
\lf\|\lf(\sum_{v=(j_P\vee 0)}^\fz\Bigg[\sum_{k\in\zz^n}(1+|k|)^{-Mr}\r.\r.\r.\\
&&\hs\times\eta_{v,Mr}\ast\lf(\lf\{\sum_{\ell(Q)=2^{-v}}
|t_Q|2^{vs(\cdot)}\wz\chi_Q\r\}^r\r.\\
&&\hs\lf.\lf.\lf.\lf.\times\chi_{P+(l+k)
\ell(P)}\Bigg)\r]^\frac{q(\cdot)}{r}
\r)^\frac r{q(\cdot)}\r\|_{L^{\frac{p(\cdot)}r}(P+l\ell(P))}\r\}^\frac1r.
\end{eqnarray*}
From this, the Minkowski inequality, Remark \ref{r-vlpp}(i),
Lemmas \ref{l-estimate2} and \ref{l-esti-cube}(ii),
we deduce that
\begin{eqnarray}\label{mdi2}
{\rm I}_2
&\ls&\frac1{\phi(P)}
\Bigg[\sum_{k,l\in\zz^n}(1+|l|)^{-Mr}(1+|k|)^{-Mr}\noz\\
&&\hs\hs\times\lf.\lf\|\lf\{\sum_{v=(j_P\vee 0)}^\fz
\lf[\sum_{\ell(Q)=2^{-v}}2^{vs(\cdot)}|t_Q|\wz \chi_Q
\r]^{q(\cdot)}\r\}^{\frac1{q(\cdot)}}\r\|_{L^{p(\cdot)}(P+(l+k)\ell(P))}^r
\r]^\frac1r\noz\\
&\ls&\|t\|_{\tlve}
\lf\{\sum_{k,l\in\zz^n}(1+|l|)^{-Mr}(1+|k|)^{-Mr}
\frac{[\phi(P+(l+k)\ell(P))]^r}{[\phi(P)]^r}\r\}^\frac1r\noz\\
&\ls&\|t\|_{\tlve}
\lf\{\sum_{k,l\in\zz^n}(1+|l|)^{-Mr+r\log_2(c_1\wz c_1)}
(1+|k|)^{-Mr+r\log_2(c_1\wz c_1)}\r\}^\frac12\noz\\
&\sim&\|t\|_{\tlve},
\end{eqnarray}
where $M$ is chosen large enough.

Finally, combining \eqref{mdi1} and \eqref{mdi2}, we conclude that
\begin{eqnarray*}
\|f\|_{\btlve}
\ls\sup_{P\in\cq}({\rm I}_1+{\rm I}_2)\ls\|t\|_{\tlve},
\end{eqnarray*}
which completes the proof of Theorem \ref{t-md}.
\end{proof}

Next we establish the Peetre maximal function characterization of $\btlve$.

Let $(\vz,\Phi)$ be a pair of admissible functions.
Recall that the \emph{Peetre maximal function} of $f\in\cs'(\rn)$
is defined by setting, for all $j\in\zz_+$, $a\in(0,\fz)$ and $x\in\rn$,
$$(\vz_j^\ast f)_a(x):=\sup_{y\in\rn}
\frac{|\vz_j\ast f(x+y)|}{(1+2^j|y|)^a},$$
where $\vz_0$ is replaced by $\Phi$.
The following Lemma \ref{l-pmf} comes from \cite[(2.48) and (2.66)]{ut12}.

\begin{lemma}\label{l-pmf}
Let $(\vz,\Phi)$ be a pair of admissible functions, $f\in\cs'(\rn)$ and $N\in\nn$.
Then, for all $t\in[1,2]$, $a\in(0,N]$, $\ell\in\zz_+$ and $x\in\rn$,
\begin{equation*}
\lf[(\vz_{2^{-\ell}t}^\ast f)_a(x)\r]^r
\le C\sum_{v=0}^\fz2^{-vNr}2^{(v+\ell)n}
\int_\rn\frac{|(\vz_{v+\ell})_t\ast f(y)|^r}
{(1+2^\ell|x-y|)^{ar}}\,dy,
\end{equation*}
where $r$ is an arbitrary fixed positive number,
$\vz_0$ is replaced by $\Phi$ and $C$ is a positive constant independent of
$\vz,\ \Phi$, $f$, $x,\ \ell$ and $t$.
\end{lemma}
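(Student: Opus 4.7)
The plan is to prove this Peetre maximal estimate via a Calder\'on-type reproducing formula combined with the Plancherel--Polya--Nikol'skii (sub-mean-value) inequality for band-limited functions, in the spirit of Rychkov's and Ullrich's arguments.

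First, I would construct Schwartz functions $\psi_0,\psi\in\cs(\rn)$ with the same type of Fourier-support conditions as $\Phi$ and $\vz$, respectively, so that $\sum_{v\ge 0}\wh{\psi_v}(\xi)\wh{\vz_v}(\xi)=1$ for every $\xi\in\rn$ (with $\psi_0,\vz_0$ corresponding to the $\Psi$- and $\Phi$-type low-frequency factors). Dilating this identity by the parameter $t\in[1,2]$ and convolving with $f\in\cs'(\rn)$ yields the reproducing formula
$$\vz_{2^{-\ell}t}\ast f \;=\; \sum_{v=0}^\infty (\psi_{v+\ell})_t \ast (\vz_{v+\ell})_t \ast f$$
with convergence in $\cs'(\rn)$; since $[1,2]$ is compact and the support conditions on $\wh\vz,\wh\psi$ are stable under the dilation by $t$, the constants in the construction can be chosen uniform in $t$.

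Second, I would estimate the $\psi_{v+\ell}$-convolution pointwise using the Schwartz property of $\psi$: for any large $M$,
$$\lf|(\psi_{v+\ell})_t\ast g(x+y)\r| \;\le\; C_M\, 2^{(v+\ell)n}\int_\rn \frac{|g(z)|}{(1+2^{v+\ell}|x+y-z|)^{M}}\,dz.$$
Applying this to $g=(\vz_{v+\ell})_t\ast f$, dividing by $(1+2^\ell|y|)^a$ and invoking the elementary inequality
$$\frac{1}{(1+2^\ell|y|)^{a}(1+2^{v+\ell}|x+y-z|)^{a}} \;\le\; \frac{C}{(1+2^\ell|x-z|)^{a}}$$
(valid since $v\ge 0$, so $2^{v+\ell}\ge 2^\ell$), the weight in the denominator is rescaled from $2^{v+\ell}$ down to $2^\ell$; excess decay in $M$ over $a$ is converted into the factor $2^{-vN}$ by choosing $M\ge N+a$ and using $2^{(v+\ell)n}$-normalization. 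This reduces matters to passing from an $L^1$-type average to an $L^r$-type average, which is furnished by the Plancherel--Polya--Nikol'skii inequality applied to the band-limited function $(\vz_{v+\ell})_t\ast f$ (whose Fourier transform is supported in $\{|\xi|\le 2^{v+\ell+1}\}$).

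Third, I would sum the contributions in $v$: for $r\in(0,1]$ the distribution of the $r$-th power is immediate via $(\sum a_v)^r\le\sum a_v^r$ (see \eqref{simple-ineq}), while for $r>1$ the H\"older inequality with weights $2^{-vNr/2}$ suffices provided $N$ is enlarged accordingly. The main obstacle, in my view, is maintaining uniformity in both $t\in[1,2]$ and $\ell\in\zz_+$ throughout the reproducing formula and the kernel estimates, and correctly treating the low-frequency term $v=0$ (where $\Phi,\Psi$ replace $\vz,\psi$ and the annular support is replaced by a ball); both issues are standard once the Calder\'on resolution is set up. Since the detailed verification is precisely what is recorded as \cite[(2.48) and (2.66)]{ut12}, the cleanest presentation is simply to invoke those estimates, as the statement does.
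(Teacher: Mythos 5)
Your overall approach---Calder\'on reproducing formula, kernel estimates, Plancherel--Polya--Nikol'skii---is indeed the Rychkov/Ullrich machinery that underlies the cited reference, and you are right that the cleanest presentation is simply to invoke \cite[(2.48) and (2.66)]{ut12}, exactly as the paper does. However, your sketch of the argument behind the citation has a genuine gap. The formula
$\vz_{2^{-\ell}t}\ast f=\sum_{v=0}^\infty(\psi_{v+\ell})_t\ast(\vz_{v+\ell})_t\ast f$
does not follow from $\sum_{v\ge0}\wh{\psi_v}\wh{\vz_v}=1$: dilating that identity by $2^{-\ell}t$ reconstructs $f$ itself, i.e.\ $f=\sum_{v\ge0}(\psi_{v+\ell})_t\ast(\vz_{v+\ell})_t\ast f$. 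To reach $\vz_{2^{-\ell}t}\ast f$ you must convolve this with $(\vz_\ell)_t$, producing the compound kernel $(\vz_\ell)_t\ast(\psi_{v+\ell})_t$ in each summand. That kernel is precisely where the decisive factor $2^{-vN}$ originates---from the vanishing moments of $\psi$ tested against the smoothness of $\vz_\ell$ (Taylor expansion gains $(2^{-v-\ell}/2^{-\ell})^N=2^{-vN}$), or in the band-limited case from the disjointness of the Fourier supports, which makes the term vanish identically for $v$ large.

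By omitting the $(\vz_\ell)_t$-convolution and estimating only $(\psi_{v+\ell})_t\ast g$, your chain of inequalities produces a bound of the form $2^{(v+\ell)n}\int|g(z)|^r(1+2^\ell|x-z|)^{-ar}\,dz$ with \emph{no} $v$-decaying factor at all, and the series over $v$ would not converge. The assertion that ``excess decay in $M$ over $a$ is converted into the factor $2^{-vN}$'' does not hold: increasing the kernel decay exponent $M$ improves decay in the spatial variable but contributes nothing scale-dependent in $v$; the $2^{-vN}$ gain is a cancellation effect, not a size effect. Once the missing $(\vz_\ell)_t$-convolution and the vanishing-moment mechanism are reinstated, your outline does match the proof of \cite[(2.48)]{ut12}.
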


\begin{theorem}\label{t-pmfc}
Let $p$, $q$, $s$ and $\phi$ be as in Definition \ref{d-tl}.
Let
\begin{equation}\label{peetre1}
a\in\lf(\frac n{\min\{p_-,q_-\}}+\log_2\wz c_1+C_{\log}(s),\fz\r).
\end{equation}
Then
$f\in\btlve$ if and only if $f\in\cs'(\rn)$ and $\|f\|_{\btlve}^\ast<\fz$,
where
$$\|f\|_{\btlve}^\ast
:=\sup_{P\in\cq}\frac{1}{\phi(P)}
\lf\|\lf\{\sum_{j=(j_P\vee 0)}^\fz\lf[2^{js(\cdot)}(\vz_j^\ast f)_a\r]^{q(\cdot)}
\r\}^{\frac 1{q(\cdot)}}\r\|_{L^{p(\cdot)}(P)}.$$
\end{theorem}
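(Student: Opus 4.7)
The plan is to prove the two inclusions separately. The easy direction $\|f\|_{\btlve}\le\|f\|_{\btlve}^\ast$ is immediate from the pointwise inequality $|\vz_j\ast f(x)|\le(\vz_j^\ast f)_a(x)$ (take $y=0$ in the defining supremum) combined with the lattice property of $\vlp$ from Remark \ref{r-vlpp}(iv).

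For the reverse $\|f\|_{\btlve}^\ast\ls\|f\|_{\btlve}$, I would apply Lemma \ref{l-pmf} with $t=1$ and $\ell=j$, which produces a pointwise estimate of $[(\vz_j^\ast f)_a(x)]^r$ by a $v$-sum of convolutions $\eta_{j,ar}\ast|\vz_{v+j}\ast f|^r$. Fix $r\in(0,\min\{p_-,q_-\})$ and $L\in[C_{\log}(s),\fz)$ such that the hypothesis on $a$ yields $ar-rL>n+r\log_2(c_1\wz c_1)$. Multiplying this pointwise bound by $2^{jrs(x)}$ and invoking Lemma \ref{l-eta} (applied with $s$ replaced by $rs$, whose log-H\"older constant is $rL$) pulls the weight inside the convolution and downgrades $\eta_{j,ar}$ to $\eta_{j,ar-rL}$ while replacing $s(x)$ by $s(y)$. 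Writing $2^{jrs(y)}=2^{-vrs(y)}2^{(v+j)rs(y)}$, bounding $2^{-vrs(y)}\le 2^{vr\|s\|_{L^\fz(\rn)}}$, and reindexing $u=v+j$ lead to
\begin{equation*}
[2^{js(x)}(\vz_j^\ast f)_a(x)]^r\ls\sum_{u\ge j}2^{-(u-j)\alpha r}\,\eta_{j,ar-rL}\ast G_u^r(x),
\end{equation*}
with $G_u:=2^{us(\cdot)}|\vz_u\ast f|$ and $\alpha>0$ as large as we like once $N$ in Lemma \ref{l-pmf} is taken large enough.

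To pass to mixed norms, set $\wz p:=p/r$ and $\wz q:=q/r$; the choice of $r$ yields $\wz p_-,\wz q_->1$, so the pointwise triangle inequality in $\ell^{\wz q(x)}$ and the triangle inequality in $L^{\wz p(\cdot)}$ both apply and allow me to pull the geometrically decaying sum in $u-j$ outside. The vector-valued convolution inequality (Lemma \ref{l-estimate2}) then removes the $\eta_{j,ar-rL}$-convolution, since $ar-rL>n$. To recover the local character of the $\btlve$-norm, I would split each convolution over the shifted cubes $P+l\ell(P)$, $l\in\zz^n$, and use $1+2^j|x-y|\gs 1+|l|$ for $x\in P$, $y\in P+l\ell(P)$ and $j\ge j_P\vee 0$ to extract a factor $(1+|l|)^{-M}$ from the kernel (with $M$ as large as $ar-rL-n$ permits). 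Combining this with Lemma \ref{l-esti-cube}(ii), which gives $\phi(P+l\ell(P))/\phi(P)\ls(1+|l|)^{\log_2(c_1\wz c_1)}$, produces a convergent series $\sum_l(1+|l|)^{\log_2(c_1\wz c_1)-M}$ once $M$ is chosen large enough, completing the proof after taking the supremum over $P\in\cq$.

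The main technical obstacle is the scale mismatch intrinsic to Lemma \ref{l-pmf}: the decay kernel lives at scale $j$, but the function being averaged has frequency $\sim 2^{v+j}$, so relocating the weight $2^{js(\cdot)}$ inside via Lemma \ref{l-eta} and then reconnecting it to the natural $2^{(v+j)s(\cdot)}$ costs a factor in $v$ that must be absorbed by the free parameter $N$. Simultaneously the off-diagonal decay in $l$ must defeat the $\phi$-doubling exponent so that the sum over dyadic shifts of $P$ converges, and it is precisely the lower bound on $a$ in the hypothesis that creates enough room for both of these trade-offs. The overall scheme parallels the proof of Theorem \ref{t-transform}, with Lemma \ref{l-pmf} playing the role that the $r$-trick lemma of \cite[Lemma A.6]{dhr09} played there.
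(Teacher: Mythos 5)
Your outline follows the paper's strategy for most of its length: the trivial direction, Lemma \ref{l-pmf} with $t=1$, $\ell=j$, the weight transfer via Lemma \ref{l-eta} at the cost $rC_{\log}(s)$, absorbing the extra $v$-geometry with a large free $N$, and finally the vector-valued convolution inequality (Lemma \ref{l-estimate2}). Up to the point where you need to recover the local character of the $\btlve$-norm, this matches the paper's proof.

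The gap is in the off-diagonal decomposition. You split $\rn$ into the translated cubes $P+l\ell(P)$, $l\in\zz^n$, and pay for the shift via Lemma \ref{l-esti-cube}(ii), which costs $(1+|l|)^{\log_2(c_1\wz c_1)}$. That forces you to extract $M>n+\log_2(c_1\wz c_1)$ from the kernel (because the $l$-sum is $n$-dimensional), while still leaving exponent $>n$ for Lemma \ref{l-estimate2}. Tracking the budget, your scheme needs
\[
 a>\frac{2n}{r}+\frac{\log_2(c_1\wz c_1)}{r}+C_{\log}(s),\qquad r<\min\{p_-,q_-\},
\]
which is strictly stronger than the stated hypothesis $a>\frac{n}{\min\{p_-,q_-\}}+\log_2\wz c_1+C_{\log}(s)$; in particular, when $\phi\equiv 1$ (so $c_1=\wz c_1=1$, the space $F^{s(\cdot)}_{p(\cdot),q(\cdot)}(\rn)$) you would require $a>2n/\min\{p_-,q_-\}+C_{\log}(s)$ instead of $a>n/\min\{p_-,q_-\}+C_{\log}(s)$. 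The paper avoids this by decomposing over dyadic \emph{annuli} $D_{k,P}:=(2^{k+1}\sqrt n P)\setminus(2^k\sqrt n P)$ centered at $P$: these are dilations, so the $\phi$-ratio $\phi(2^{k+1+n}P)/\phi(P)\ls 2^{k\log_2\wz c_1}$ costs only $\log_2\wz c_1$ (not $\log_2(c_1\wz c_1)$), and the $k$-sum is one-dimensional and geometric, so you only need to spend an arbitrarily small $\vez r>(\log_2\wz c_1)r$ of kernel exponent on it, keeping the remaining $(a-\vez)r-rC_{\log}(s)>n$ for Lemma \ref{l-estimate2}. In short: using fixed-scale translates forces the kernel to beat an $n$-dimensional lattice sum twice, whereas matching the covering-cube scale to the distance from $P$ turns the off-diagonal sum into a cheap geometric series and matches the sharp bound on $a$. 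To repair your argument, replace the translated-cube decomposition by the annular one.
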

\begin{proof}
Observe that, by definitions, we have
$\|f\|_{\btlve}\le\|f\|_{\btlve}^\ast$. Next we show that
$\|f\|_{\btlve}^\ast\ls\|f\|_{\btlve}$ for all $f\in\btlve$.

By \eqref{peetre1}, we find that there exist
$r\in(0,\min\{p_-,q_-\})$ and $\vez\in(\log_2\wz c_1,\fz)$
such that $a>n/r+\vez+C_{\log}(s)$.
For any given dyadic cube $P\subset\rn$,
by Lemma \ref{l-pmf},
we see that
\begin{eqnarray}\label{qc1}
{\rm J}_P
:=&&\frac1{\phi(P)}\lf\|\lf\{\sum_{j=(j_P\vee 0)}^\fz
\lf[2^{js(\cdot)}(\vz_j^\ast f)_a\r]^{q(\cdot)}\r\}^{\frac1{q(\cdot)}}\r\|
_{L^{p(\cdot)}(P)}\noz\\
\ls&&\frac1{\phi(P)}
\lf\|\lf\{\sum_{j=(j_P\vee 0)}^\fz
\lf[2^{js(\cdot)}\lf(\sum_{v=0}^\fz2^{-vNr}2^{(v+j)n}\r.\r.\r.\r.\noz\\
&&\times\lf.\lf.\lf.\lf.\int_\rn\frac{|\vz_{v+j}
\ast f(y)|^r}{(1+2^j|\cdot-y|)^{ar}}\,dy\r)^\frac1r\r]
^{q(\cdot)}\r\}^\frac1{q(\cdot)}\r\|_{L^{p(\cdot)}(P)},
\end{eqnarray}
where $N\in\nn\cap[a,\fz)$ is determined later.
Notice that $j\ge (j_P\vee0)$ and, for all $x\in P$ and
$y\in (2^{k+1}\sqrt nP)\backslash(2^{k}\sqrt nP)=:D_{k,P}$
with $k\in\nn$,
$$1+2^j|x-y|\gs2^j2^{-j_P}2^k.$$
Then it follows that, for all $x\in P$,
\begin{eqnarray*}
&&\int_\rn\frac{|\vz_{v+j}\ast f(y)|^r}{(1+2^j|x-y|)^{ar}}\,dy\\
&&\hs=\lf\{\int_{2\sqrt nP}+\sum_{k=1}^\fz
\int_{D_{k,P}}\r\}\frac{|\vz_{v+j}
\ast f(y)|^r}{(1+2^j|x-y|)^{ar}}\,dy\\
&&\hs\ls 2^{-jn}\eta_{j,ar}\ast\lf(|\vz_{v+j}\ast f|^r\chi_{2\sqrt nP}\r)(x)\\
&&\hs\hs+2^{-j(\vez r+n)}2^{j_P\vez r}\sum_{k=1}^\fz2^{-k\vez r}
\eta_{j,(a-\vez)r}\ast\lf(|\vz_{v+j}\ast f|^r\chi_{D_{k,P}}\r)(x)\\
&&\hs=:{\rm I}_{P,1}+{\rm I}_{P,2},
\end{eqnarray*}
which implies that
\begin{eqnarray}\label{qc2}
{\rm J}_P&\ls&
\frac1{\phi(P)}\lf\|\lf\{\sum_{j=(j_P\vee 0)}^\fz
\lf[2^{js(\cdot)r}\sum_{v=0}2^{-vNr}2^{(v+j)n}{\rm I}_{P,1}\r]^\frac{q(\cdot)}r
\r\}^\frac1{q(\cdot)}\r\|_{L^{p(\cdot)}(P)}\noz\\
&&\hs\hs+\frac1{\phi(P)}\lf\|\lf\{\sum_{j=(j_P\vee 0)}^\fz
\lf[2^{js(\cdot)r}\sum_{v=0}2^{-vNr}2^{(v+j)n}{\rm I}_{P,2}\r]^\frac{q(\cdot)}r
\r\}^\frac1{q(\cdot)}\r\|_{L^{p(\cdot)}(P)}\noz\\
&=:&{\rm J}_{P,1}+{\rm J}_{P,2}.
\end{eqnarray}

For ${\rm J}_{P,1}$, by Lemmas \ref{l-eta} and \ref{l-estimate2},
the Minkowski inequality
and Remark \ref{r-vlpp}(i),
we find that
\begin{eqnarray}\label{qc3}
{\rm J}_{P,1}
&\ls&\frac1{\phi(P)}\lf\|\lf\{\sum_{j=(j_P\vee 0)}^\fz
\lf[\sum_{v=0}^\fz2^{(n-Nr)v}2^{js(\cdot)r}|\vz_{v+j}\ast f|^r\r]
^{\frac{q(\cdot)}{r}}\r\}^{\frac r{q(\cdot)}}
\r\|_{L^{\frac {p(\cdot)}r}(2\sqrt nP)}^\frac1r\noz\\
&\ls&\lf\{\sum_{v=0}^\fz2^{-v(Nr-n+s_-r)}\frac1{[\phi(P)]^r}\r.\noz\\
&&\hs\hs\times\lf.
\lf\|\lf\{\sum_{j=(j_p\vee0)}^\fz\lf[2^{(v+j)s(\cdot)}
|\vz_{v+j}\ast f|^r\r]^{q(\cdot)}\r\}^{\frac1{q(\cdot)}}
\r\|_{L^{p(\cdot)}(2\sqrt nP)}^r
\r\}^{\frac1r}\noz\\
&\ls&\lf\{\sum_{v=0}^\fz2^{-v(Nr-n+s_-r)}\r\}^\frac1r
\|f\|_{\btlve}\sim\|f\|_{\btlve},
\end{eqnarray}
where we used the condition (\textbf{S1}) of $\phi$
in the third inequality and $N\in\nn$ is chosen large enough
such that $N\in[a,\fz)\cap(\frac nr-s_-,\fz)$.

For ${\rm J}_{P,2}$, by an argument similar to the above, we find that
\begin{eqnarray}\label{3.17x}
{\rm J}_{P,2}&\ls&\Bigg\{\sum_{v=0}^\fz2^{-v(Nr-n+rs_-)}
\sum_{k=1}^\fz2^{-k\vez}\frac1{[\phi(P)]^r}\noz\\
&&\hs\hs\times\lf.\lf\|\lf\{\sum_{j=(j_p\vee0)}^\fz\lf[2^{(v+j)s(\cdot)}
|\vz_{v+j}\ast f|^r\r]^{q(\cdot)}\r\}^{\frac1{q(\cdot)}}
\r\|_{L^{p(\cdot)}(D_{k,P})}^r\r\}^\frac1r\noz\\
&\ls&\lf\{\sum_{v=0}^\fz2^{-v(Nr-n+rs_-)}
\sum_{k=1}^\fz2^{-k\vez r}
\frac{[\phi(2^{k+1+n}P)]^r}{[\phi(P)]^r}\r\}^\frac1r\|f\|_{\btlve}\noz\\
&\ls&\lf\{\sum_{k=1}^\fz2^{-k(\vez-\log_2\wz c_1)}\r\}^\frac1r\|f\|_{\btlve}
\sim \|f\|_{\btlve}.
\end{eqnarray}
Combining the estimates \eqref{qc1}, \eqref{qc2}, \eqref{qc3} and
\eqref{3.17x}, we conclude that
\begin{eqnarray*}
\|f\|_{\btlve}^\ast\le\sup_{P\in\cq}{\rm J}_P\ls
\sup_{P\in\cq}({\rm J}_{P,1}+{\rm J}_{P,2})\ls\|f\|_{\btlve},
\end{eqnarray*}
which completes the proof of Theorem \ref{t-pmfc}.
\end{proof}

As applications of Theorem \ref{t-pmfc}, we obtain two equivalent
quasi-norms of the space $\btlve$. To this end, for all $f\in\cs'(\rn)$,
let
$$\lf\|f\lf|\btlve\r.\r\|_1:=\sup_{P\in\cq}
\frac1{\phi(P)}\lf\|\lf\{\sum_{j=0}^\fz\lf[2^{js(\cdot)}
|\vz_j\ast f|\r]^{q(\cdot)}\r\}^{\frac1{q(\cdot)}}\r\|_{L^{p(\cdot)}(P)}$$
and
$$\lf\|f\lf|\btlve\r.\r\|_2:=\sup_{Q\in\cq}\sup_{x\in Q}|Q|^{-\frac{s(x)}n}
[\phi(Q)]^{-1}\|\chi_Q\|_{\vlp}|\vz_{j_Q}\ast f(x)|.$$

\begin{theorem}\label{t-sum}
Let $p,\ q,\ s$, $\phi$ be as in Definition \ref{d-tl}.
\begin{enumerate}
\item[{\rm (i)}] If $\wz c_1\in(0,2^{n/p_+})$, then $f\in\btlve$ if and only if
$f\in\cs'(\rn)$ and $\|f|\btlve\|_1<\fz$; moreover, there exists a positive constant $C$,
independent of $f$, such that
$$C^{-1}\|f\|_{\btlve}\le\lf\|f\lf|\btlve\r.\r\|_1\le C\|f\|_{\btlve}.$$

\item[{\rm (ii)}] If $c_1\in(0,2^{-n/p_-})$, then $f\in\btlve$ if and only if
$f\in\cs'(\rn)$ and $\|f|\btlve\|_2<\fz$; moreover, there exists a positive constant $C$,
independent of $f$, such that
$$C^{-1}\|f\|_{\btlve}\le\lf\|f\lf|\btlve\r.\r\|_2\le C\|f\|_{\btlve}.$$
\end{enumerate}
\end{theorem}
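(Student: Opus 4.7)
The inequality $\|f\|_{\btlve}\le\|f\,|\btlve\|_1$ in (i) is immediate because the summation range in $\|f\,|\btlve\|_1$ dominates that in the definition of $\btlve$ pointwise in $j$. The estimate $\|f\,|\btlve\|_2\ls\|f\|_{\btlve}$ in (ii) is only marginally harder: for any $Q\in\cq$ with $\ell(Q)\le1$ and $x\in Q$, the symmetry of the Peetre maximal function gives $(\vz_{j_Q}^{*}f)_a(x)\ls(\vz_{j_Q}^{*}f)_a(z)$ for every $z\in Q$ (since $2^{j_Q}|x-z|\ls 1$), so coupling $|\vz_{j_Q}\ast f(x)|\le(\vz_{j_Q}^{*}f)_a(x)$ with the elementary inequality $\inf_Q g\cdot\|\chi_Q\|_{\vlp}\le\|g\chi_Q\|_{\vlp}$, the $k=j_Q$ term of Theorem \ref{t-pmfc} on $Q$, and the log-H\"older continuity of $s$ (to replace $2^{j_Q s(\cdot)}$ by $|Q|^{-s(x)/n}$ up to constants on $Q$) produces the desired pointwise bound; neither numerical hypothesis is needed.

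For the hard direction of (i), fix a dyadic cube $P$ with $j_P\ge1$ (the case $j_P\le0$ already agreeing with the definition of $\btlve$) and split $\sum_{j=0}^{\fz}=\sum_{j=0}^{j_P-1}+\sum_{j=j_P}^{\fz}$. The tail $\sum_{j\ge j_P}$ is dominated by $\phi(P)\|f\|_{\btlve}$ by definition. For $0\le j<j_P$, let $P^{(j)}$ be the dyadic ancestor of $P$ of side $2^{-j}$. The Peetre relation gives $(\vz_j^{*}f)_a(x)\ls(\vz_j^{*}f)_a(z)$ for $x\in P$ and $z\in P^{(j)}$, the log-H\"older continuity of $s$ allows one to trade $2^{js(x)}$ for $2^{js(z)}$ up to constants on $P^{(j)}$, and $\inf_{P^{(j)}}g\le\|g\chi_{P^{(j)}}\|_{\vlp}/\|\chi_{P^{(j)}}\|_{\vlp}$ combined with the $k=j$ term of Theorem \ref{t-pmfc} on $P^{(j)}$ yields
\begin{equation*}
2^{js(x)}|\vz_j\ast f(x)|\ls\frac{\phi(P^{(j)})}{\|\chi_{P^{(j)}}\|_{\vlp}}\,\|f\|_{\btlve}\qquad(x\in P,\ 0\le j<j_P).
\end{equation*}
Iterating (\textbf{S1}) gives $\phi(P^{(j)})\ls(\wz c_1)^{j_P-j}\phi(P)$, and Lemma \ref{l-esti-cube-1} together with the log-H\"older continuity of $p$ gives $\|\chi_{P^{(j)}}\|_{\vlp}\gs 2^{(j_P-j)n/p_+}\|\chi_P\|_{\vlp}$, so the right-hand side is at most $(\wz c_1/2^{n/p_+})^{j_P-j}[\phi(P)/\|\chi_P\|_{\vlp}]\|f\|_{\btlve}$. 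The hypothesis $\wz c_1<2^{n/p_+}$ makes the corresponding geometric series converge uniformly in $x$, $j_P$ and $q(x)$; raising to the $q(x)$-power, summing over $j$, taking $L^{p(\cdot)}(P)$ norm, and multiplying by $\|\chi_P\|_{\vlp}/\phi(P)$ gives the desired $\ls\|f\|_{\btlve}$.

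The remaining direction $\|f\|_{\btlve}\ls\|f\,|\btlve\|_2$ in (ii) proceeds analogously but descending in scale. For $P\in\cq$, $x\in P$, and $j\ge j_P\vee0$, let $Q_j(x)$ be the dyadic cube of side $2^{-j}$ containing $x$; the definition of $\|\cdot\,|\btlve\|_2$ rearranges to
\begin{equation*}
2^{js(x)}|\vz_j\ast f(x)|\le\frac{\phi(Q_j(x))}{\|\chi_{Q_j(x)}\|_{\vlp}}\,\|f\,|\btlve\|_2.
\end{equation*}
Iterated halving under (\textbf{S1}) produces $\phi(Q_j(x))\ls c_1^{j-j_P}\phi(P)$, log-H\"older continuity of $p$ gives $\|\chi_{Q_j(x)}\|_{\vlp}\sim 2^{-(j-j_P)n/p(x)}\|\chi_P\|_{\vlp}$, and thus $\phi(Q_j(x))/\|\chi_{Q_j(x)}\|_{\vlp}\ls(c_1\cdot 2^{n/p_-})^{j-j_P}\phi(P)/\|\chi_P\|_{\vlp}$. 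Since $c_1\cdot 2^{n/p_-}<1$ by assumption, summing the resulting geometric series in $j-j_P$, taking $L^{p(\cdot)}(P)$ norm, and dividing by $\phi(P)$ closes the estimate. The principal technical obstacle in both parts is the quantitative transfer between the Peetre-maximal-function norms (which in Theorem \ref{t-pmfc} live only at scales $\ge j_P\vee0$) and genuine pointwise bounds at cubes of different scale; once everything is reduced to the single quantity $\phi(Q)/\|\chi_Q\|_{\vlp}$ along nested dyadic chains, the two numerical thresholds $\wz c_1<2^{n/p_+}$ and $c_1<2^{-n/p_-}$ emerge precisely as the conditions guaranteeing geometric summability in the upward and downward chains of ancestors, respectively.
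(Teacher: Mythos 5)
Your proposal is correct and follows essentially the same route as the paper's proof: both split the sum at $j_P$, use the ancestor cubes $P_j$ (your $P^{(j)}$) together with the Peetre maximal function from Theorem \ref{t-pmfc} and the log-H\"older continuity of $s$ to obtain a constant-on-$P$ upper bound proportional to $\phi(P_j)/\|\chi_{P_j}\|_{\vlp}$, and then convert the resulting sum into a geometric series via (\textbf{S1}) and the $\|\chi_Q\|_{\vlp}$-scaling, with the two thresholds $\wz c_1<2^{n/p_+}$ and $c_1<2^{-n/p_-}$ supplying convergence. The only genuine difference is cosmetic: where the paper invokes an $r$-trick (bounding the $\ell^{q(\cdot)}$ sum by an $\ell^r$ sum inside an $L^{p(\cdot)/r}$ norm, $r\le\min\{1,p_-,q_-\}$), you observe directly that once the $j$-th term is bounded by a constant times $\kappa^{|j-j_P|}$ with $\kappa<1$, the sum $\{\sum_j\kappa^{|j-j_P|q(x)}\}^{1/q(x)}$ is uniformly bounded because $0<q_-\le q_+<\infty$, so no $r$-trick is needed. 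One small imprecision worth flagging: the intermediate claim $\|\chi_{Q_j(x)}\|_{\vlp}\sim 2^{-(j-j_P)n/p(x)}\|\chi_P\|_{\vlp}$ is not literally what \cite[Lemma 2.6]{zyl14} provides (it gives a two-sided bound with exponents $1/p_-$ and $1/p_+$, not $1/p(x)$), but you only use the one-sided inequality with $p_-$ (respectively $p_+$) anyway, so the argument stands.
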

\begin{proof}
We first show (i). To this end, it suffices to show that
$$\|f|\btlve\|_1\ls\|f\|_{\btlve},$$ since the inverse inequality obviously
holds true by definitions.

Let $P\in\cq$ be a given dyadic cube. By Remark \ref{r-vlpp}(i), we see that
\begin{eqnarray*}
{\rm I}_P
:=&&\frac1{\phi(P)}\lf\|\lf\{\sum_{j=0}^\fz\lf[2^{js(\cdot)}
|\vz_j\ast f|\r]^{q(\cdot)}\r\}^{\frac1{q(\cdot)}}\r\|_{L^{p(\cdot)}(P)}\\
\ls&& \frac1{\phi(P)}\lf\|\lf\{\sum_{j=0}^{(j_P\vee0)-1}\lf[2^{js(\cdot)}
|\vz_j\ast f|\r]^{q(\cdot)}\r\}^{\frac1{q(\cdot)}}\r\|_{L^{p(\cdot)}(P)}\\
&&+\frac1{\phi(P)}\lf\|\lf\{\sum_{j=(j_P\vee0)}^{\fz}\lf[2^{js(\cdot)}
|\vz_j\ast f|\r]^{q(\cdot)}\r\}^{\frac1{q(\cdot)}}\r\|_{L^{p(\cdot)}(P)}
=:{\rm I}_{P,1}+{\rm I}_{P,2},
\end{eqnarray*}
where $\sum_{j=0}^{(j_P\vee0-1)}\cdots=0$ if $j_P\le0$.

Obviously, ${\rm I}_{P,2}\le \|f\|_{\btlve}$.

For ${\rm I}_{P,1}$,
we only need to estimate it in the case that $j_P>0$.
For any $j\in\nn$ with $j\le j_P-1$, there exists
a unique dyadic cube $P_j$ such that
$P\subset P_j$ and $\ell(P_j)=2^{-j}$.
Since $s\in C_{\rm loc}^{\log}(\rn)\cap L^{\fz}(\rn)$, it follows that,
for all $a\in(0,\fz)$, $x\in P$ and $y\in P_j$,
\begin{eqnarray*}
2^{js(x)}|\vz_j\ast f(x)|
&\ls& 2^{js(x)}(1+2^j|x-y|)^a(\vz_j^\ast f)_a(y)\\
&\ls&2^{j[s(x)-s(y)]}2^{js(y)}(\vz_j^\ast f)_a(y)\\
&\ls&2^{j\frac{C_{\log(s)}}{\log(e+1/|x-y|)}}2^{js(y)}(\vz_j^\ast f)_a(y)
\ls2^{js(y)}(\vz_j^\ast f)_a(y),
\end{eqnarray*}
which implies that, for all $x\in P$,
\begin{equation}\label{sum2}
2^{js(x)}|\vz_j\ast f(x)|\ls\inf_{y\in P_j}2^{js(y)}(\vz_j^\ast f)_a(y).
\end{equation}
Thus, choosing $r\in(0,\min\{1,p_-,q_-\})$ and $a$ as in Theorem \ref{t-pmfc},
by Theorem \ref{t-pmfc} and Remark \ref{r-vlpp}(i),
 we conclude that
\begin{eqnarray}\label{sum1}
{\rm I}_{P,1}
&\ls& \frac1{\phi(P)}\lf\|\lf\{\sum_{j=0}^{j_P-1}\lf[\inf_{y\in P_j}
2^{js(y)}(\vz_j^\ast f)_a(y)\r]^{q(\cdot)}\r\}
^{\frac1{q(\cdot)}}\r\|_{L^{p(\cdot)}(P)}\noz\\
&\ls&\frac1{\phi(P)}\lf\|\sum_{j=0}^{j_P-1}
\lf\|2^{js(\cdot)}(\vz_j^\ast f)_a\r\|_{L^{p(\cdot)}(P_j)}^{r}
\|\chi_{P_j}\|_{\vlp}^{-r}\r\|_{L^{\frac{p(\cdot)}r}(P)}^\frac1r\noz\\
&\ls&\|f\|_{\btlve}
\lf\{\sum_{j=0}^{j_P-1}\lf[\frac{\phi(P_j)}{\phi(P)}\r]^r
\lf[\frac{\|\chi_{P}\|_{\vlp}}{\|\chi_{P_j}\|_{\vlp}}\r]^r\r\}^\frac1r.
\end{eqnarray}
On the other hand, by \cite[Lemma 2.6]{zyl14}, we find that
$$\|\chi_{P_j}\|_{\vlp}\gs2^{-j\frac n{p_+}}2^{j_P\frac{n}{p_+}}
\|\chi_P\|_{\vlp}$$
and, by the condition (\textbf{S1}) of $\phi$, we see that
$\phi(P)\ge 2^{j\log_2\wz c_1}2^{-j_P\log_2\wz c_1}\phi(c_P,2^{-j}),$
which, together with \eqref{sum1} and the condition (\textbf{S2}) of $\phi$,
implies that
\begin{eqnarray*}
{\rm I}_{P,1}
&\ls&\|f\|_{\btlve}
\lf\{\sum_{j=0}^{j_P-1}2^{j(\frac n{p_+}-\log_2\wz c_1)r}
\lf[\frac{\phi(c_P,2^{-j})}{\phi(P_j)}\r]^r\r\}^{\frac1r}
2^{j_P(\log_2\wz c_1-\frac n{p_+})}\\
&\ls&\|f\|_{\btlve}
\lf\{\sum_{j=0}^{j_P-1}2^{j(\frac n{p_+}-\log_2\wz c_1)r}\r\}^{\frac1r}
2^{j_P(\log_2\wz c_1-\frac n{p_+})}\sim\|f\|_{\btlve},
\end{eqnarray*}
where we used the fact that $\wz c_1\in(0,2^{n/p_+})$ in the last inequality.
Therefore,
$$\lf\|f\lf|\btlve\r.\r\|_1=\sup_{P\in\cq}{\rm I}_P\ls\|f\|_{\btlve},$$
which completes the proof of (i).

Now we prove (ii). For all $Q\in\cq^\ast$, from \eqref{sum2} and
Theorem \ref{t-pmfc}, we deduce that, for all $x\in Q$,
\begin{eqnarray*}
&&[\phi(Q)]^{-1}\|\chi_Q\|_{\vlp}|Q|^{-\frac{s(x)}{n}}|\vz_{j_Q}\ast f(x)|\\
&&\hs\ls\frac{\|\chi_Q\|_{\vlp}}{\phi(Q)}\inf_{y\in Q}
|Q|^{-\frac{s(y)}{n}}(\vz_{j_Q}^\ast f)_a(y)\\
&&\hs\ls\frac{1}{\phi(Q)}
\lf\||Q|^{-\frac{s(\cdot)}{n}}(\vz_{j_Q}^\ast f)_a\r\|_{L^{p(\cdot)}(Q)}
\ls\|f\|_{\btlve},
\end{eqnarray*}
which implies that $\|f|\btlve\|_2\ls\|f\|_{\btlve}$.

Conversely, by choosing $r\in(0,\min\{1,p_-,q_-\})$ and
an argument similar to that used in the proof of (i),
we conclude that, for any $P\in \cq$,
\begin{eqnarray*}
&&\frac1{\phi(P)}\lf\|\lf\{\sum_{j=(j_P\vee 0)}^\fz
\lf[2^{js(\cdot)}|\vz_j\ast f|\r]^{q(\cdot)}\r\}^{\frac1{q(\cdot)}}
\r\|_{L^{p(\cdot)}(P)}\\
&&\hs\ls\lf\|f\lf|\btlve\r.\r\|_2\lf\|\lf\{\sum_{j=(j_P\vee 0)}^\fz
\lf[\sum_{\gfz{\ell(\wz Q)=2^{-j}}{\wz Q\in\cq,\wz Q\subset P}}
\frac{\phi(P)^{-1}\phi(\wz Q)\chi_{\wz Q}}{\|\chi_{\wz Q}\|_{\vlp}}
\r]^{q(\cdot)}\r\}^{\frac1{q(\cdot)}}
\r\|_{L^{p(\cdot)}(P)}\\
&&\hs\ls\lf\|f\lf|\btlve\r.\r\|_2\lf\{\sum_{j=(j_P\vee 0)}^\fz
2^{j(\frac n{p_-}+\log_2c_1)r}\r\}^\frac1r2^{-j_P(\frac n{p_-}+\log_2c_1)}\\
&&\hs\ls\lf\|f\lf|\btlve\r.\r\|_2,
\end{eqnarray*}
where we used the fact that $c_1\in(0,2^{-n/p_-})$ in the last inequality, which
implies that
$$\|f\|_{\btlve}\ls\|f|\btlve\|_2.$$
This finishes the proof of (ii)
and hence Theorem \ref{t-sum}.
\end{proof}

\begin{remark}
In the case that $p,\ q,\ s$ and $\phi$ are as in Remark \ref{r-defi}(ii),
Theorem \ref{t-sum}(i) coincides with \cite[Corollary 3.3(i)]{ysiy}
and Theorem \ref{t-sum}(ii) goes back to
\cite[Theorem 2.2(i)]{yyaa13}.
\end{remark}

We now compare the Triebel-Lizorkin-type space
with variable exponents in this article with the variable
Triebel-Lizorkin-Morrey space
$\mathcal{E}_{p(\cdot),q(\cdot),u}^{s(\cdot)}(\rn)$ introduced by Ho
\cite{ho12} and show that, in general, these two scales of
Triebel-Lizorkin spaces do not cover each other.

To recall the definition of the variable Triebel-Lizorkin-Morrey
space in \cite{ho12},
we need some notions.
A measurable function $u(x,r):\ \rn\times(0,\fz)\to(0,\fz)$ is said to
belong to $\mathcal{W}_q$ with $q\in(0,\fz)$ if there exist $C_1,
\ C_2\in(0,\fz)$ and $\lz\in[0,1/q)$ such that, for all $x\in\rn$,
$u(x,r)>1$ if $r\in[1,\fz)$,
$\frac{u(x,2r)}{u(x,r)}\le 4^{n\lz}$ if $r\in(0,\fz)$,
and
\begin{equation*}
C_2^{-1}\le\frac{u(x,t)}{u(x,r)}\le C_2\quad {\rm if}\quad 0<r\le t\le 2r.
\end{equation*}

\begin{definition}\label{d-tlm}
Let $p,\ q,\ s$, $\{\vz_j\}_{j\in\zz_+}$ be as in Definition \ref{d-tl} and
$u\in \mathcal{W}_{p_+}$.
Then the \emph{variable Triebel-Lizorkin-Morrey space}
$\mathcal{E}_{p(\cdot),q(\cdot),u}^{s(\cdot)}(\rn)$ is defined
to be the set of all
$f\in\cs'(\rn)$ such that
\begin{equation*}
\lf\|f\r\|_{\mathcal{E}_{p(\cdot),q(\cdot),u}^{s(\cdot)}(\rn)}
:=\sup_{\gfz{z\in\rn}{R\in(0,\fz)}}\frac1{u(z,R)}
\lf\|\lf\{\sum_{j=0}^\fz\lf[2^{js(\cdot)}|\vz_j\ast f|
\r]^{q(\cdot)}\r\}^\frac1{q(\cdot)}\r\|_{L^{p(\cdot)}(B(z,R))}<\fz.
\end{equation*}
\end{definition}

\begin{remark}\label{r-compare}
(i) We point out that the Triebel-Lizorkin-type space with variable
exponents in this article
can not be covered by the Triebel-Lizorkin-Morrey space in \cite{ho12} even when
$\wz c_1\in(0,2^{n/p_+})$.
To see this, it suffices to show that there exists a
set function $\phi$ satisfying (\textbf{S1}) and (\textbf{S2})
does not belong to $\mathcal{W}_{q}$ for any $q\in(0,\fz)$.

Indeed, for all $Q\subset\rn$, let
$\phi(Q):=\int_Q|x|^\alpha\,dx$, where $\alpha\in(-n,0)$.
Then, by \cite[p.\,196]{stein93}, we know that
$\phi$ is doubling, which, together with Remark \ref{r-phi}(iv),
further implies that
$\phi$ satisfies the conditions (\textbf{S1}) and (\textbf{S2}). However,
$\phi\notin \mathcal{W}_q$ for any $q\in(0,\fz)$. To see this,
let $x_0\in\rn$ and $r\in(1,2)$ satisfy $|x_0|\ge 2r$. Then
$$\phi(x_0,r):=\phi(Q(x_0,r))=\int_{Q(x_0,r)}|y|^\alpha\,dy
\sim|x_0|^\alpha $$
tends $0$ as $|x_0|\to\fz$ since $\alpha\in(-n,0)$, which implies that
$\phi\notin \mathcal{W}_q$ for any $q\in(0,\fz)$.

(ii) Also, the variable Triebel-Lizorkin-Morrey space investigated in \cite{ho12}
can not be covered by
the Triebel-Lizorkin-type space with variable exponents in this article.
To see this, it suffices to show that there exists a
function $u$ such that $u$ belongs to $\mathcal{W}_1$ but
does not satisfy the condition (\textbf{S2}).

Indeed, let, for all $x\in\rn$ and $r\in(0,\fz)$, $u(x,r):=r^{\lz(x)}$, where
$\lz(x):=n(1-\frac 1{1+|x|})$. Then,
as was pointed out in \cite[p.\,380]{ho12}, $u\in \mathcal{W}_1$. However, $u$
does not satisfy the condition (\textbf{S2}). To see this,
let $x,\ y\in\rn$ satisfy that $\vez<|x|<\frac{(1+\vez)r+\vez}{2+\vez}$ and
$|y|=\frac{1+|x|}{1+\vez}-1$,
where $\vez\in(0,\fz)$ and $r\in(\vez,\fz)$. Then
$$|x-y|\le|x|+|y|=|x|+\frac{1+|x|}{1+\vez}-1<r,$$
but
$$\frac{u(x,r)}{u(y,r)}=r^{n(\frac1{1+|y|}-\frac1{1+|x|})}
=r^{\frac{n\vez}{1+|x|}}
\to\fz,\quad{\rm as}\quad r\to\fz,$$
which implies that $u$
does not satisfy the condition (\textbf{S2}).
\end{remark}

As an application of Theorem \ref{t-sum}, we prove that the space
$F_{p(\cdot),2}^{0,\phi}(\rn)$ coincides with the \emph{Morrey space
with variable exponent}, $\cm_\phi^{p(\cdot)}(\rn)$, which
is defined to be the set of all measurable functions $f$ such that
$$\|f\|_{\cm_{\phi}^{p(\cdot)}(\rn)}
:=\sup_{P\in\cq}\frac1{\phi(P)}\|f\|_{L^{p(\cdot)}(P)}<\fz,$$
where the supremum is taken over all dyadic cubes of $\rn$.
\begin{remark}
(i) We point out that, in \cite{ho12}, Ho studied
the variable Morrey space $\cm_u^{p(\cdot)}(\rn)$, which is
defined in the same way as $\cm_\phi^{p(\cdot)}(\rn)$ above but with $\phi$ replaced
by $u$ as in Definition \ref{d-tlm} and the supremum is taken
over all balls of $\rn$. From Remark \ref{r-compare},
we deduce that the Morrey space with variable exponent $\cm_\phi^{p(\cdot)}(\rn)$
in this article and the variable Morrey space $\cm_u^{p(\cdot)}(\rn)$
 in \cite{ho12}
do not cover each other.

(ii) For $\vz:\ \rn\times(0,\fz)\to(0,\fz)$ and
a variable exponent $p:\ \rn\to[1,\fz)$,
Nakai \cite{nakai14} introduced the \emph{variable Morrey space} $L^{(p,\vz)}(\rn)$,
 which is defined
to be the set of all measurable functions $f$ such that
$$\|f\|_{L^{(p,\vz)}(\rn)}:=\sup_{{\rm balls}\ B\subset\rn}\|f\|_{p,\vz,B}<\fz,$$
where, for all balls $B:=B(x,r)\subset\rn$, $\vz(B):=\vz(x,r)$ and
$$\|f\|_{p,\vz,B}:=\inf\lf\{\lz\in(0,\fz):\ \frac1{\vz(B)|B|}\int_B
\lf[\frac{|f(y)|}{\lz}\r]^{p(y)}\,dy\le1\r\},$$
and the supremum is taken over all balls $B$ of $\rn$.

We claim that, if there exists a positive constant $C$ such that, for all
$x\in \rn$ and $0<r<s<\infty$,
\begin{equation}\label{3.18y}
C^{-1}\phi(x,r)\le \phi(x,s)\le C\phi(x,r)
\end{equation}
and,  for all balls $B\subset\rn$ and all $y\in B$,
\begin{equation}\label{3.18x}
\vz(B)|B|\sim [\phi(B)]^{p(y)},
\end{equation}
then $\cm_\phi^{p(\cdot)}(\rn)$ coincides with $L^{(p,\vz)}(\rn)$.

Indeed, by \eqref{3.18y} and the definition of $\|\cdot\|_{\cm_\phi^{p(\cdot)}(\rn)}$,
we conclude that
\begin{equation}\label{3.19x}
\|f\|_{\cm_\phi^{p(\cdot)}(\rn)}
\sim\sup_{{\rm balls}\ B\subset\rn}\inf\lf\{\lz\in(0,\fz):\ \int_B\lf[
\frac{|f(y)|}{\phi(B)\lz}\r]^{p(y)}\,dy\le1\r\}.
\end{equation}
On the other hand, by \eqref{3.18x}, we find that
\begin{eqnarray*}
&&\inf\lf\{\lz\in(0,\fz):\ \int_B\lf[
\frac{|f(y)|}{\phi(B)\lz}\r]^{p(y)}\,dy\le1\r\}\\
&&\hs\sim
\inf\lf\{\lz\in(0,\fz):\ \frac1{\vz(B)|B|}\int_B
\lf[\frac{|f(y)|}{\lz}\r]^{p(y)}\,dy\le1\r\}
\end{eqnarray*}
which, combined with \eqref{3.19x}, implies that $\cm_\phi^{p(\cdot)}(\rn)$
coincides with
$L^{(p,\vz)}(\rn)$. This proves the above claim.

Obviously, in general, these two scales of Morrey spaces with variable exponents,
$\cm_\phi^{p(\cdot)}(\rn)$ and $L^{(p,\vz)}(\rn)$,
may not cover each other.
\end{remark}

In what follows, for all $p\in\cp(\rn)$,
denote by $L^{p(\cdot)}(\ell^2(\rn))$
the \emph{set} of all sequences $\{g_j\}_{j\in\zz_+}$ of measurable
functions such that
$$\|\{g_j\}_{j\in\zz_+}\|_{L^{p(\cdot)}(\ell^2(\rn))}
:=\lf\|\lf\{\sum_{j\in\zz_+}|g_j|^2\r\}^{\frac12}\r\|
_{\vlp}<\fz.$$
Let $(\vz,\Phi)$ and $(\psi,\Psi)$ be two pairs of admissible functions
satisfying \eqref{cz1}. The operator $\mathcal{G}$ is defined by setting,
for all $f\in L^{p(\cdot)}(\rn)$,
$\mathcal{G}(f):=\{\vz_j\ast f\}_{j\in\zz_+}$, where, when $j=0$,
 $\vz_0$ is replaced by $\Phi$,
 and its conjugate operator $\mathcal{G}^\ast$ is defined by setting, for all
$\{g_j\}_{j\in\zz_+}\in L^{\wz{P}(\cdot)}(\ell^2(\rn))$,
$$\mathcal{G}^\ast(\{g_j\}_{j\in\zz_+})
:=\sum_{j\in\zz_+}\psi_j\ast g_j,$$
where, when $j=0$, $\psi_0$ is replaced by $\Psi$.

\begin{remark}\label{r-lpc}
Let $p(\cdot)\in C^{\log}(\rn)$ satisfy $1<p_-\le p_+<\fz$.
Then, from the fact that $L^{\wz{P}(\cdot)}(\rn)=F_{\wz{P}(\cdot),2}^{0}(\rn)$
(see \cite[Theorem 4.2]{dhr09}),
we deduce that the operator $\mathcal{G}$ is bounded from
$L^{\wz{P}(\cdot)}(\rn)$ to $L^{\wz{P}(\cdot)}(\ell^2(\rn))$.
Furthermore, by an argument similar to that used in the proof of
\cite[Corollary 4.4]{ho12}, we conclude
that the operator $\mathcal{G}^\ast$ is bounded from $L^{p(\cdot)}(\ell^2(\rn))$
to $\vlp$.
\end{remark}

\begin{proposition}\label{p-lpc}
Let $p$ and $\phi$ be as in Definition \ref{d-tl} and $\wz c_1\in(0,2^{n/p_+})$.
If $1<p_-\le p_+<\fz$, then
$$\cm_\phi^{p(\cdot)}(\rn)=F_{p(\cdot),2}^{0,\phi}(\rn)$$
with equivalent norms.
\end{proposition}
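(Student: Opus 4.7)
My approach is to use the Littlewood-Paley characterization $L^{p(\cdot)}(\rn)=F_{p(\cdot),2}^{0}(\rn)$ from \cite[Theorem~4.2]{dhr09} (valid under the hypotheses $1<p_-\le p_+<\fz$ and $1/p\in C^{\log}(\rn)$) together with localization against the dyadic cube $P$ that tests the Morrey/Triebel-Lizorkin-type norm. The starting point is Theorem~\ref{t-sum}(i), whose hypothesis $\wz c_1\in(0,2^{n/p_+})$ is exactly what is assumed here, giving the equivalent quasi-norm
\begin{equation*}
\|f\|_1:=\sup_{P\in\cq}\frac1{\phi(P)}\lf\|\lf\{\sum_{j=0}^{\fz}|\vz_j\ast f|^2\r\}^{1/2}\r\|_{L^{p(\cdot)}(P)}
\end{equation*}
in which the inner sum starts from $j=0$ for every dyadic cube $P$, thereby enabling a term-by-term comparison with the Morrey norm. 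Moreover, the reasoning of Remark~\ref{r-lpc} applied both to $p(\cdot)$ and to its conjugate $\wz P(\cdot)$ shows that $\mathcal{G}$ and $\mathcal{G}^{\ast}$ are bounded between $L^{p(\cdot)}(\rn)$ and $L^{p(\cdot)}(\ell^2(\rn))$.

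For the inclusion $\cm_\phi^{p(\cdot)}(\rn)\hookrightarrow F_{p(\cdot),2}^{0,\phi}(\rn)$, I fix a dyadic cube $P$ and decompose $f=f\chi_{3P}+f\chi_{(3P)^c}=:f_1+f_2$. The local piece satisfies $\|\mathcal{G}(f_1)\|_{L^{p(\cdot)}(\ell^2(\rn))}\ls\|f_1\|_{L^{p(\cdot)}(\rn)}=\|f\|_{L^{p(\cdot)}(3P)}\ls\phi(P)\|f\|_{\cm_\phi^{p(\cdot)}(\rn)}$, where the last step comes from covering $3P$ by $\le 5^n$ dyadic cubes of side $\ell(P)$ and invoking (\textbf{S1}) and (\textbf{S2}) on $\phi$. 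For the nonlocal piece, I write $(3P)^c=\bigcup_{k\ge 0}A_k$ with $A_k:=3\cdot 2^{k+1}P\setminus 3\cdot 2^k P$ and use, for $x\in P$ and $y\in A_k$, the Schwartz tail $|\vz_j(x-y)|\ls 2^{jn}(1+2^{j+k-j_P})^{-M}$ with $M$ as large as desired. Cauchy-Schwarz in $j$ produces the factor $(\sum_{j\ge 0}2^{2jn}(1+2^{j+k-j_P})^{-2M})^{1/2}$, whose elementary evaluation gives a bound by $2^{(j_P-k)n}$ when $k\le j_P$ and by $2^{-(k-j_P)M}$ when $k>j_P$; combined with H\"older's inequality (Remark~\ref{r-vlpp}(iii)) on $\int_{A_k}|f|$, the doubling estimate $\|f\|_{L^{p(\cdot)}(A_k)}\ls(\wz c_1)^k\phi(P)\|f\|_{\cm_\phi^{p(\cdot)}(\rn)}$, and the scaling of $\|\chi_P\|_{L^{p(\cdot)}}$ and $\|\chi_{A_k}\|_{L^{\wz P(\cdot)}}$ (as in \cite[Lemma~2.6]{zyl14}), the resulting geometric series in $k$ converges precisely because $\wz c_1<2^{n/p_+}$.

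For the reverse inclusion $F_{p(\cdot),2}^{0,\phi}(\rn)\hookrightarrow\cm_\phi^{p(\cdot)}(\rn)$, I apply the Calder\'on reproducing formula $f=\sum_{j}\psi_j\ast g_j$ in $\cs'(\rn)$ with $g_j:=\vz_j\ast f$ (and $\psi_0$, $\vz_0$ replaced by $\Psi$, $\Phi$), and split each $g_j=g_j\chi_{3P}+g_j\chi_{(3P)^c}$. For the local part $\mathcal{G}^{\ast}(\{g_j\chi_{3P}\})$, the boundedness of $\mathcal{G}^{\ast}$ (Remark~\ref{r-lpc}) together with the definition of $\|f\|_1$ gives
\begin{equation*}
\|\mathcal{G}^{\ast}(\{g_j\chi_{3P}\})\|_{L^{p(\cdot)}(P)}\ls\lf\|\lf\{\sum_j|g_j|^2\r\}^{1/2}\r\|_{L^{p(\cdot)}(3P)}\ls\phi(P)\|f\|_1.
\end{equation*}
For the nonlocal part, the same annular decomposition applies; using Cauchy-Schwarz in $j$ together with the Minkowski-type bound $(\sum_j(\int_{A_k}|g_j|)^2)^{1/2}\le\int_{A_k}(\sum_j|g_j|^2)^{1/2}$, I obtain the pointwise estimate $|\sum_j\psi_j\ast(g_j\chi_{A_k})(x)|\ls(\sum_ja_j^2)^{1/2}\int_{A_k}G$ with $G:=(\sum_j|g_j|^2)^{1/2}$ for $x\in P$, and the same H\"older/doubling analysis used in the first direction closes the argument.

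The main obstacle is the bookkeeping in the nonlocal estimates: I must choose $M$ large enough that the Schwartz decay of $\vz_j$ (or $\psi_j$) overcomes both the polynomial growth of $\|\chi_{A_k}\|_{L^{\wz P(\cdot)}(\rn)}$ in $k$ and the doubling factor $(\wz c_1)^k$ attached to $\phi$. The hypothesis $\wz c_1<2^{n/p_+}$ provides precisely the critical balance between the Morrey growth (encoded in $\wz c_1$) and the variable integrability (encoded in $p_+$) that makes the geometric series in $k$ converge uniformly in the scale $\ell(P)$, which also explains why this hypothesis is inherited from Theorem~\ref{t-sum}(i).
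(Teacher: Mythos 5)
Your proposal is correct and, up to presentation, follows the same route as the paper's proof: both directions rest on Theorem~\ref{t-sum}(i), the Littlewood--Paley identity $L^{p(\cdot)}(\rn)=F^{0}_{p(\cdot),2}(\rn)$ (equivalently, boundedness of $\mathcal G$ and $\mathcal G^{\ast}$ via Remark~\ref{r-lpc}), the Calder\'on reproducing formula for surjectivity, a local/nonlocal split of $f$ (the paper uses $Q(x_0,2r)$ where you use $3P$, an inessential choice), and an annular decomposition of the far field handled by Schwartz decay, H\"older's inequality, \cite[Lemma~2.6]{zyl14}, and the doubling of $\phi$, with the geometric series in $k$ closing precisely because $\wz c_1<2^{n/p_+}$. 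The one cosmetic difference is that the paper first sums in $j$ pointwise, using $\{\sum_j|\vz_j(x-y)|^2\}^{1/2}\ls|x-y|^{-n}$, and only then decomposes into annuli, whereas you decompose into annuli first and evaluate the $j$-sum per annulus via a sup-bound (what you call Cauchy--Schwarz), arriving at the same two-regime bound $2^{(j_P-k)n}$ (for $k\le j_P$) and $2^{-(k-j_P)M}$ (for $k>j_P$); the paper's ordering slightly streamlines the bookkeeping you flag as the main obstacle, but the two computations are interchangeable.
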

\begin{proof}
We first prove that $\cm_\phi^{p(\cdot)}(\rn)\hookrightarrow
 F_{p(\cdot),2}^{0,\phi}(\rn)$. By Theorem \ref{t-sum}(i),
it suffices to show that, for all $f\in \cm_\phi^{p(\cdot)}(\rn)$,
\begin{equation}\label{morrey3}
\sup_{Q\in\cq}\frac1{\phi(Q)}\lf\|\lf\{\sum_{j=0}^\fz
|\vz_j\ast f|^2\r\}^\frac12\r\|_{L^{p(\cdot)}(Q)}
\ls\|f\|_{\cm_{\phi}^{p(\cdot)}(\rn)},
\end{equation}
 where $\{\vz_j\}_{j=0}^\fz$ are as in Definition \ref{d-tl}.

For all $Q:=Q(x_0,r)\in \cq$, let $f_1:=f\chi_{Q(x_0,2r)}$ and $f_2:=f-f_1$. From
\cite[Theorem 4.2]{dhr09} and the condition (\textbf{S1}) of $\phi$, we deduce that
\begin{eqnarray}\label{morrey1}
{\rm I}_1&:=&
\frac1{\phi(Q)}\lf\|\lf\{\sum_{j=0}^\fz|\vz_j\ast f_1|^2
\r\}^\frac12\r\|_{L^{p(\cdot)}(Q)}\noz\\
&\ls&\frac1{\phi(Q)}\|f_1\|_{L^{p(\cdot)}(\rn)}
\sim\frac1{\phi(Q)}\|f\|_{L^{p(\cdot)}(Q(x_0,2r))}
\ls\|f\|_{\cm_\phi^{p(\cdot)}(\rn)}.
\end{eqnarray}
On the other hand, by the Minkowski inequality, we find that, for all $x\in\rn$,
\begin{eqnarray*}
\lf\{\sum_{j=0}^\fz|\vz_j\ast f_2(x)|^2\r\}^\frac12
&\ls&\int_{\rn\setminus Q(x_0,2r)}
\lf\{\sum_{j=0}^\fz|\vz_j(x-y)|^2\r\}^\frac12|f(y)|\,dy\\
&\ls&\int_{\rn\setminus Q(x_0,2r)}\frac{|f(y)|}{|x-y|^n}\,dy
\sim\sum_{k=1}^\fz\int_{S_k}\frac{|f(y)|}{|x-y|^n}\,dy,
\end{eqnarray*}
where, for $k\in\nn$, $S_k:=Q_{k+1}\setminus Q_k$ and $Q_k:=Q(x_0,2^kr)$.
Observe that, when $x\in Q(x_0,r)$ and $y\in S_k$,
$|x-y|\ge 2^kr$. Setting $(p(\cdot))^\ast:=\frac{p(\cdot)}{p(\cdot)-1}$,
by the H\"older inequality of variable Lebesgue spaces
(see Remark \ref{r-vlpp}(iii)),
 \cite[Lemma 2.6]{zyl14} and \cite[Proposition 2.4]{ho12},
 we see that
\begin{eqnarray*}
&&\lf\|\lf\{\sum_{j=0}^\fz|\vz_j\ast f_2|^2\r\}^\frac12\r\|_{L^{p(\cdot)}(Q)}\\
&&\hs\hs\ls \sum_{k=1}^\fz\frac1{2^{kn}r^n}
\|f\|_{L^{p(\cdot)}(S_k)}\|\chi_{S_k}\|_{L^{p(\cdot)^\ast}(\rn)}
\|\chi_{Q}\|_{\vlp}\\
&&\hs\hs\ls\sum_{k=1}^\fz\frac{2^{-kn/p_+}}{r^n2^{kn}}
\|f\|_{L^{p(\cdot)}(S_k)}\|\chi_{Q_{k+1}}\|_{L^{p(\cdot)^\ast}(\rn)}
\|\chi_{Q_{k+1}}\|_{\vlp}\\
&&\hs\hs\ls\sum_{k=1}^\fz2^{-kn/p_+}
\|f\|_{L^{p(\cdot)}(S_k)},
\end{eqnarray*}
which, together with the condition (\textbf{S1}) of $\phi$,
implies that
\begin{eqnarray}\label{morrey2}
{\rm I}_2
&:=&\frac1{\phi(Q)}
\lf\|\lf\{\sum_{j=0}^\fz|\vz_j\ast f_2|^2\r\}^\frac12\r\|_{L^{p(\cdot)}(Q)}
\ls\sum_{k=1}^\fz2^{-kn/p_+}\frac{\|f\|_{L^{p(\cdot)}(Q_{k+1})}}{\phi(Q_{k+1})}
\frac{\phi(Q_{k+1})}{\phi(Q)}\noz\\
&\ls&\|f\|_{\cm_{\phi}^{p(\cdot)}(\rn)}\sum_{k=1}^\fz
2^{-kn/p_+}2^{k\log_2\wz c_1}\sim\|f\|_{\cm_{\phi}^{p(\cdot)}(\rn)},
\end{eqnarray}
where we used the fact that $\wz c_1\in(0,2^{n/p_+})$ in the last inequality.

Combining \eqref{morrey1} and \eqref{morrey2}, we conclude that
\begin{equation*}
\sup_{Q\in\cq}\frac1{\phi(Q)}\lf\|\lf\{\sum_{j=0}^\fz
|\vz_j\ast f|^2\r\}^\frac12\r\|_{L^{p(\cdot)}(Q)}
\ls\sup_{Q\in\cq}({\rm I}_1+{\rm I}_2)\ls\|f\|_{\cm_{\phi}^{p(\cdot)}(\rn)}
\end{equation*}
and \eqref{morrey3} holds true.

Next, we prove that $F_{p(\cdot),2}^{0,\phi}(\rn)\hookrightarrow
\cm_{\phi}^{p(\cdot)}(\rn)$. Let $f\in F_{p(\cdot),2}^{0,\phi}(\rn)$.
Then, by the Calder\'on reproducing formula (see \cite[Lemma 2.3]{ysiy}),
we find that
\begin{equation}\label{morrey-w}
f=\Psi\ast \Phi\ast f+\sum_{j=1}^\fz\psi_j\ast \vz_j\ast f
=:\sum_{j=0}^\fz \psi_j\ast\vz_j\ast f
\end{equation}
in $\cs'(\rn)$, where $\Psi,\ \Phi$, $\vz$ and $\psi$ are as in
\eqref{cz1}. For all $j\in\zz_+$, we use $f_j$ to denote $\vz_j\ast f$.
For all $Q:=Q(x_0,r)\in\cq$ and $j\in\zz_+$,
let $f_j^1:=f_j\chi_{Q(x_0,2r)}$ and $f_j^2:=f_j-f_j^1$.
Then we know that
\begin{eqnarray*}
&&\frac1{\phi(Q)}
\lf\|\sum_{j=0}^\fz\psi_j\ast\vz_j\ast f\r\|_{L^{p(\cdot)}(Q)}\\
&&\hs\ls\frac1{\phi(Q)}\lf\|\sum_{j=0}^\fz\psi_j\ast f_j^1\r\|_{L^{p(\cdot)}(Q)}
+\frac1{\phi(Q)}\lf\|\sum_{j=0}^\fz\psi_j\ast f_j^2\r\|_{L^{p(\cdot)}(Q)}
=:{\rm J}_1+{\rm J}_2.
\end{eqnarray*}

By Remark \ref{r-lpc}, the condition (\textbf{S1}) of $\phi$ and
Theorem \ref{t-sum}(i), we see that
\begin{eqnarray}\label{morrey-y}
{\rm J}_1
&\sim&\frac1{\phi(Q)}\lf\|\mathcal{G}^\ast
\lf(\lf\{f_j^1\r\}_{j\in\zz_+}\r)\r\|_{\vlp}\noz\\
&\ls&\frac1{\phi(Q)}\lf\|\lf\{\sum_{j=0}^\fz|\vz_j\ast f|^2
\r\}^\frac12\r\|_{L^{p(\cdot)}(Q(x_0,2r))}
\ls\|f\|_{F_{p(\cdot),2}^{0,\phi}(\rn)}.
\end{eqnarray}
On the other hand, for all $x\in Q(x_0,2r)$,
by the H\"older inequality, we find that
\begin{equation*}
\lf|\sum_{j\in\zz_+}\psi_j\ast f_j^2(x)\r|
\ls\int_{\rn\backslash Q(x_0,2r)}
\lf[\sum_{j\in\zz_+}|f_j(y)|^2\r]^\frac12|x-y|^{-n}\,dy.
\end{equation*}
Thus, by an argument similar to that used in the proof of
\eqref{morrey2}, we conclude that
$${\rm J}_2\ls \|f\|_{F_{p(\cdot),2}^{0,\phi}(\rn)},$$
which, combined with \eqref{morrey-y}, implies that
$$\frac1{\phi(Q)}
\lf\|\sum_{j=0}^\fz\psi_j\ast\vz_j\ast f\r\|_{L^{p(\cdot)}(Q)}
\ls{\rm J}_1+{\rm J}_2\ls \|f\|_{F_{p(\cdot),2}^{0,\phi}(\rn)}.$$
Therefore, $\sum_{j=0}^\fz\psi_j\ast\vz_j\ast f\in\cm_\phi^{p(\cdot)}(\rn)$ and
$$\lf\|\sum_{j=0}^\fz\psi_j\ast\vz_j\ast f\r\|_{\cm_\phi^{p(\cdot)}(\rn)}
\ls \|f\|_{F_{p(\cdot),2}^{0,\phi}(\rn)},$$
which, together with \eqref{morrey-w}, implies that
$F_{p(\cdot),2}^{0,\phi}(\rn)\hookrightarrow
\cm_{\phi}^{p(\cdot)}(\rn)$. This finishes the proof of Proposition \ref{p-lpc}.
\end{proof}

\begin{remark}
In the case that $p$ and $\phi$ are as in Remark \ref{r-defi}(ii), the conclusion
of Proposition \ref{p-lpc} is already known; see, for example,
\cite[Theorem 3.9]{st07}.
\end{remark}

We end this section by giving another application of Theorem \ref{t-pmfc}.

\begin{proposition}\label{p-subset}
Let $p,\ q,\ s$ and $\phi$ be as in Definition \ref{d-tl}. Then
$$\cs(\rn)\hookrightarrow\btlve\hookrightarrow\cs'(\rn).$$
\end{proposition}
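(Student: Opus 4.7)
The proof breaks into two continuous embeddings; I plan to treat each separately.

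For the embedding $\btlve\hookrightarrow\cs'(\rn)$, I will reassemble results established in Section \ref{s2}. Given $f\in\btlve$, Theorem \ref{t-transform} furnishes the bound $\|S_\vz f\|_{\tlve}\le C\|f\|_{\btlve}$ together with the identity $f=T_\psi\circ S_\vz f$ in $\cs'(\rn)$. The proof of Lemma \ref{l-estimate3} (not merely its statement) supplies an integer $M\in\nn$ for which $|\la T_\psi t,h\ra|\le C\|t\|_{\tlve}\|h\|_{\cs_M(\rn)}$ holds uniformly in $t\in\tlve$ and $h\in\cs(\rn)$. Specializing to $t=S_\vz f$ will immediately yield $|\la f,h\ra|\le C\|f\|_{\btlve}\|h\|_{\cs_M(\rn)}$, the desired continuous embedding into $\cs'(\rn)$.

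For $\cs(\rn)\hookrightarrow\btlve$, my plan is to invoke the Peetre maximal function characterization (Theorem \ref{t-pmfc}) with an $a$ chosen in the admissible range, and then show that $\|f\|_{\btlve}^\ast<\fz$ is controlled by a Schwartz seminorm of $f$. Since $\wh\vz$ is supported away from the origin, $\vz$ has all vanishing moments, and a standard Taylor-expansion argument gives, for any $f\in\cs(\rn)$ and any $M,N\in\nn$,
$$|\vz_j\ast f(x)|\le C_{f,M,N}\,2^{-jM}(1+|x|)^{-N}\quad(j\in\nn,\,x\in\rn),$$
with $C_{f,M,N}$ dominated by a Schwartz seminorm of $f$, and $|\Phi\ast f(x)|\le C_{f,N}(1+|x|)^{-N}$. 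Splitting the supremum defining $(\vz_j^\ast f)_a(x)$ into the regimes $|x+y|\ge|x|/2$ and $|x+y|<|x|/2$ will then give $(\vz_j^\ast f)_a(x)\le C_{f,M}\,2^{-jM}(1+|x|)^{-a}$ for every $M\in\nn$. Plugging this into $\|f\|_{\btlve}^\ast$, choosing $M>\|s\|_{L^\fz(\rn)}$ so that the series in $j$ is geometric, and taking $L^{p(\cdot)}(P)$-norms will lead to
$$\frac{1}{\phi(P)}\lf\|\lf\{\sum_{j\ge j_P\vee 0}[2^{js(\cdot)}(\vz_j^\ast f)_a]^{q(\cdot)}\r\}^{\frac1{q(\cdot)}}\r\|_{L^{p(\cdot)}(P)}\le C\,2^{-(j_P\vee 0)M'}\,\frac{\|(1+|\cdot|)^{-a}\chi_P\|_{\vlp}}{\phi(P)}$$
for any prescribed $M'\in\nn$. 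I will then invoke Lemma \ref{l-esti-cube-1} to bound $\|\chi_P\|_{\vlp}$ above by a polynomial in $2^{j_P}$ and $1+|k_P|$, and iterate the conditions (\textbf{S1}) and (\textbf{S2}) in the spirit of Lemma \ref{l-esti-cube} to produce a matching polynomial lower bound on $\phi(P)$. Choosing $M'$ and $a$ large enough that the combined exponents of $2^{j_P}$ and $1+|k_P|$ become non-positive will then secure $\sup_{P\in\cq}(\cdots)<\fz$, and Theorem \ref{t-pmfc} will complete the embedding.

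The main technical obstacle will be the exponent bookkeeping in this final step: combining the polynomial upper bound on $\|\chi_P\|_{\vlp}$, the polynomial lower bound on $\phi(P)$, the geometric factor $2^{-j_P M'}$, and the spatial decay $(1+|\cdot|)^{-a}$ so that the cumulative exponents of both $2^{j_P}$ and $1+|k_P|$ are non-positive, uniformly over all dyadic cubes $P\in\cq$. Since $M'$ (from the arbitrary Schwartz decay of $f$) and $a$ (only lower-bounded in Theorem \ref{t-pmfc}) can both be taken as large as required, the balance can always be struck; but it demands careful tracking of the constants $\|s\|_{L^\fz(\rn)}$, $\log_2 c_1$, $\log_2\wz c_1$, $n/p_-$, and $n/p_+$, and may require splitting the supremum into the cases $j_P\ge 0$ and $j_P<0$.
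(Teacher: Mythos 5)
Your proposal is correct in outline, but both halves take routes that differ from the paper's.

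For $\btlve\hookrightarrow\cs'(\rn)$, the paper applies the Calder\'on reproducing formula and the Peetre maximal function characterization (Theorem \ref{t-pmfc}); you instead factor through the $\vz$-transform: $f=T_\psi(S_\vz f)$, $\|S_\vz f\|_{\tlve}\ls\|f\|_{\btlve}$ from Theorem \ref{t-transform}, and $|\la T_\psi t,h\ra|\ls\|t\|_{\tlve}\|h\|_{\cs_M(\rn)}$ from the proof of Lemma \ref{l-estimate3}. This is a more economical argument that reuses already-established infrastructure, and it is sound. For $\cs(\rn)\hookrightarrow\btlve$, the paper estimates $|\vz_j\ast f|$ directly (via \cite[Lemma 2.4]{ysiy}) and plugs this into the defining quasi-norm; you go through $(\vz_j^\ast f)_a$ and Theorem \ref{t-pmfc} instead. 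Since $\|f\|_{\btlve}\le\|f\|^\ast_{\btlve}$ always, bounding the Peetre maximal quasi-norm is strictly harder and is not actually needed, so this is a detour; but your pointwise estimate $(\vz_j^\ast f)_a(x)\ls 2^{-jM}(1+|x|)^{-a}$ is correct (the case split $|x+y|\gtrless |x|/2$ works as you say), and once it is in hand the two arguments run in parallel.

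The one place where your sketch is genuinely incomplete is the case $j_P\le 0$, which you only flag as a possibility. When $P$ is a large dyadic cube containing or adjacent to the origin, $(1+|\cdot|)^{-a}$ is far from constant on $P$, so the single estimate $\|(1+|\cdot|)^{-a}\chi_P\|_{\vlp}/\phi(P)$ is too coarse: the numerator is merely bounded by the constant $\|(1+|\cdot|)^{-a}\|_{\vlp}$, while the factor $2^{-(j_P\vee 0)M'}$ that you rely on to close the exponent bookkeeping degenerates to $1$ there, so increasing $M'$ no longer helps. Moreover, Lemmas \ref{l-esti-cube} and \ref{l-esti-cube-1}, which you plan to invoke, are stated only for $j\ge0$; for $j_P<0$ you would need the analogous estimates for large cubes, together with a lower bound on $\phi(P)$ obtained by iterating (\textbf{S1}) and (\textbf{S2}). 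The paper resolves this case by decomposing $P$ into dyadic annuli $S_i=2^iB(0,1)\setminus 2^{i-1}B(0,1)$, $i=0,\dots,-j_P+n$, estimating $\|(1+|\cdot|)^{-M}\|_{L^{p(\cdot)}(S_i)}$ on each $S_i$, and summing; you would need to carry out this annular decomposition (or an equivalent device) to close the argument.
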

\begin{proof}
We first prove that $\cs(\rn)\hookrightarrow\btlve$. To prove this embedding,
we need to show that there exists an $M\in\nn$ such that, for all $f\in\cs(\rn)$,
$$\|f\|_{\btlve}\ls\|f\|_{\cs_M(\rn)}.$$

Let $f\in\cs(\rn)$ and $(\vz,\Phi)$ be a pair of admissible functions.
 Let $P:=Q_{j_Pk_P}$ be an arbitrary dyadic cube.
If $j_P>0$, choosing $r\in(0,\min\{1,p_-,q_-\})$,
by \cite[Lemma 2.4]{ysiy}, Remark \ref{r-vlpp}(i) and Lemmas \ref{l-esti-cube}
and \ref{l-esti-cube-1}, we obtain
\begin{eqnarray}\label{subset1}
&&\frac1{\phi(P)}\lf\|\lf\{\sum_{j=j_P}^\fz
\lf[2^{js(\cdot)}|\vz_j\ast f|\r]^{q(\cdot)}
\r\}^\frac1{q(\cdot)}\r\|_{L^{p(\cdot)}(P)}\noz\\
&&\hs\ls\|f\|_{\cs_{M+1}(\rn)}\frac1{\phi(P)}
\lf\|\lf\{\sum_{j=j_P}^\fz\lf[2^{js(\cdot)}
\frac{2^{-jM}}{(1+|\cdot|)^{n+M}}\r]^{q(\cdot)}
\r\}^\frac1{q(\cdot)}\r\|_{L^{p(\cdot)}(P)}\noz\\
&&\hs\ls\|f\|_{\cs_{M+1}(\rn)}\frac1{\phi(P)}
\lf\{\sum_{j=j_P}^\fz2^{-j(M-s_+)r}\lf\|\frac1{(1+|\cdot|)^{(M+n)r}}
\r\|_{L^{\frac{p(\cdot)}r}(P)}\r\}^\frac1r\noz\\
&&\hs\ls\|f\|_{\cs_{M+1}(\rn)}\lf\{\sum_{j=j_P}^\infty2^{-j(M-s_+)r}
\frac1{(1+2^{-j_P}|k_P|)^{(M+n)r}}\r\}^\frac1r
\frac{\|\chi_P\|_{L^{p(\cdot)}(P)}}{\phi(P)}\noz\\
&&\hs\ls\|f\|_{\cs_{M+1}(\rn)}2^{-j_P(\frac M2+\frac n{p_+}-s_+-\log_2c_1)}
(1+|k_P|)^{-\frac M2+n(\frac1{p_-}-\frac1p_+)+\log_2(c_1\wz c_1)}\noz\\
&&\hs\ls\|f\|_{\cs_{M+1}(\rn)},
\end{eqnarray}
where $M$ is chosen large enough.

If $j_P\le0$, then we see that
\begin{eqnarray*}
{\rm I}_P&:=&\frac1{\phi(P)}\lf\|\lf\{\sum_{j=0}^\fz
\lf[2^{js(\cdot)}|\vz_j\ast f|\r]^{q(\cdot)}
\r\}^\frac1{q(\cdot)}\r\|_{L^{p(\cdot)}(P)}\\
&\ls&\frac1{\phi(P)}\|\Phi\ast f\|_{L^{p(\cdot)}(P)}+
\frac1{\phi(P)}\lf\{\sum_{j=1}^\fz2^{js_+r}
\|\vz_j\ast f\|_{L^{p(\cdot)}(P)}^r\r\}^\frac1r.
\end{eqnarray*}
When $P$ is away from the origin, by
an argument similar to that used in the proof of \eqref{subset1},
 we conclude that
${\rm I}_P\ls\|f\|_{\cs_{M+1}(\rn)}$ with $M$ being sufficiently large.
When one of the corners of $P$ is the origin, then
$P\subset \cup_{i=0}^{-j_P+n}S_i$, where $S_0:=B(0,1)$
and $S_i:=2^iS_0\backslash(2^{i-1}S_0)$ for all $i\in\{1,\dots,-j_P+1\}$.
From this, Lemmas \ref{l-esti-cube} and \ref{l-esti-cube-1}
 and the fact that $|k_P|\le1$, we deduce that
\begin{eqnarray*}
\frac1{\phi(P)}\|\Phi\ast f\|_{L^{p(\cdot)}(P)}
&\ls&\frac1{\phi(P)}\lf\{\sum_{i=0}^{-j_P+n}
\lf\|\frac1{(1+|\cdot|)^{M}}\r\|_{L^{p(\cdot)}(S_i)}^r\r\}^{\frac1r}
\ls\|f\|_{\cs_{M+1}(\rn)}
\end{eqnarray*}
and, similarly,
\begin{eqnarray*}
\frac1{\phi(P)}\lf\{\sum_{j=1}^\fz2^{js_+r}
\|\vz_j\ast f\|_{L^{p(\cdot)}(P)}^r\r\}^\frac1r\ls\|f\|_{\cs_{M+1}(\rn)},
\end{eqnarray*}
where $M$ is chosen large enough, which implies that
 ${\rm I}_P\ls\|f\|_{\cs_{M+1}(\rn)}$.
Therefore, $\cs(\rn)\hookrightarrow\btlve$ and
 $\|f\|_{\btlve}\ls\|f\|_{\cs_{M+1}(\rn)}$.

Next we show that $\btlve\hookrightarrow\cs'(\rn)$.
To this end, we need to prove that
there exists an $M\in\nn$ such that, for all $f\in\btlve$ and $h\in\cs(\rn)$,
\begin{equation*}
|\la f,h\ra|\ls\|f\|_{\btlve}\|h\|_{\cs_{M+1}(\rn)}.
\end{equation*}

Let $\vz$, $\psi$, $\Phi$ and $\Psi$ be as in Theorem \ref{t-transform}. Then,
by the Calder\'on reproducing formula in \cite[Lemma 2.3]{ysiy}, together with
\cite[Lemma 2.4]{ysiy},
we obtain
\begin{eqnarray}\label{subset2}
|\la f,h\ra|
&\le&\int_\rn|\Phi\ast f(x)||\Psi\ast h(x)|\,dx+\sum_{j=1}^\fz
\int_\rn|\vz_j\ast f(x)||\psi_j\ast h(x)|\,dx\noz\\
&\ls&\|h\|_{\cs_{M+1}(\rn)}\sum_{j=0}^\fz2^{-jM}\int_\rn
|\vz_j\ast f(x)|(1+|x|)^{-(n+M)}\,dx\noz\\
&\sim&\|h\|_{\cs_{M+1}(\rn)}\sum_{j=0}^\fz2^{-jM}
\sum_{k\in\zz^n}\int_{Q_{0k}} |\vz_j\ast f(x)|(1+|x|)^{-(n+M)}\,dx,\quad\quad
\end{eqnarray}
where we used $\Phi$ to replace $\vz_0$.
Notice that, for any $j\in\zz_+$, $k\in\zz^n$, $a\in(0,\fz)$ and $y\in Q_{jk}$,
\begin{eqnarray*}
\int_{Q_{0k}}|\vz_j\ast f(x)|\,dx
&\ls& (\vz_j^\ast f)_a(y)\int_{Q_{0k}}(1+2^j|x|+2^j|y|)^a\,dx\\
&\ls&2^{ja}(\vz_j^\ast f)_a(y)(1+|k|)^a.
\end{eqnarray*}
Then, by the arbitrariness of $y\in Q_{jk}$, we see that
\begin{equation*}
\int_{Q_{0k}}|\vz_j\ast f(x)|\,dx\ls 2^{ja}(1+|k|)^a\inf_{y\in Q_{jk}}
(\vz_j^\ast f)_a(y),
\end{equation*}
which, combined with \eqref{subset2}, Theorem \ref{t-pmfc}
and Lemmas \ref{l-esti-cube} and \ref{l-esti-cube-1}, implies that
\begin{eqnarray*}
|\la f,h\ra|
&\ls&\|h\|_{\cs_{M+1}(\rn)}\sum_{j=0}^\fz2^{-jM}
\sum_{k\in\zz^n}\int_{Q_{0k}}\frac{|\vz_j\ast f(x)|}{(1+|k|)^{n+M}}\,dx\\
&\ls&\|h\|_{\cs_{M+1}(\rn)}\sum_{j=0}^\fz2^{-jM+ja}
\sum_{k\in\zz^n}\frac{\inf_{y\in Q_{jk}}
(\vz_j^\ast f)_a(y)}{(1+|k|)^{n+M-a}}\\
&\ls&\|h\|_{\cs_{M+1}(\rn)}\sum_{j=0}^\fz2^{-jM+ja}
\sum_{k\in\zz^n}(1+|k|)^{-(n+M-a)}
\frac{\|(\vz_j^\ast f)_a\|_{L^{p(\cdot)}(Q_{jk})}}
{\|\chi_{Q_{jk}}\|_{L^{p(\cdot)}(Q_{jk})}}\\
&\ls&\|f\|_{\btlve}\|h\|_{\cs_{M+1}(\rn)}\sum_{j=0}^\fz2^{j(a-M-s_-)}\\
&&\hs\times\sum_{k\in\zz^n}(1+|k|)^{(a-n-M)}
\frac{\phi(Q_{jk})}{\|\chi_{Q_{jk}}\|_{L^{p(\cdot)}(Q_{jk})}}
\ls\|f\|_{\btlve}\|h\|_{\cs_{M+1}(\rn)},
\end{eqnarray*}
where $a$ is chosen as in \eqref{peetre1}.
This finishes the proof of Proposition \ref{p-subset}.
\end{proof}

\section{A trace theorem}\label{s4}
In this section, we mainly establish a trace theorem for Triebel-Lizorkin-type
spaces with variable exponents
by applying the atomic characterization of these spaces obtained in Theorem \ref{t-md}.

To state our main result of this section, we first give some notation.
For measurable functions $p$, $q$, $s$ and a set function $\phi$ being
 as in Definition
\ref{d-tl}, let
$F_{p(\wz{\cdot},0),q(\wz{\cdot},0)}^{s(\wz{\cdot},0),\wz \phi}(\rr^{n-1})$
denote the Triebel-Lizorkin-type spaces with variable exponents
$p(\wz{\cdot},0)$, $q(\wz{\cdot},0)$ and $s(\wz{\cdot},0)$ on $\rr^{n-1}\times\{0\}$,
where $\wz \phi$ is defined by setting,
for all cubes $\wz{Q}$ of $\rr^{n-1}$, $\wz \phi(\wz{Q}):=\phi(\wz{Q}\times[0,\ell(\wz{Q}))$.
In what follows, let $\rr_+^{n}:=\rr^{n-1}\times[0,\fz)$ and
$\rr_-^n:=\rr^{n-1}\times(-\fz,0]$.

Let $f\in \btlve$. Then, by Theorem \ref{t-md}, we have
$f=\sum_{Q\in\cq^\ast}t_Qa_Q$ in $\cs'(\rn)$ and
$$\lf\|\{t_Q\}_{Q\in\cq^\ast}\r\|_{\tlve}
\le C\|f\|_{\btlve},$$
where $C$ is a positive constant independent of $f$ and, for each $Q\in\cq^\ast$,
$a_Q$ is a smooth atom of
$\btlve$. Define the \emph{trace} of $f$ by setting, for all $\wz{x}\in\rr^{n-1}$,
\begin{equation}\label{d-trace}
\mathop\mathrm{Tr}(f)(\wz{x}):=\sum_{Q\in\cq^\ast}t_Qa_Q(\wz{x},0).
\end{equation}
This definition of $\mathop\mathrm{Tr}(f)$ is determined canonical for all
$f\in \btlve$, since the actual construction of $a_Q$ in the proof of
Theorem \ref{t-md} implies that $t_Qa_Q$ is obtained canonically.
Moreover, in Lemma \ref{l-welld} below, we show that the summation in
\eqref{d-trace} converges in $\cs'(\rr^{n-1})$.
Thus, the trace operator is well defined.

The main result of this section is the following trace theorem.

\begin{theorem}\label{t-trace1}
Let $n\ge2$, $p$, $q\in \cp(\rn)$ satisfy
$$0<p_-\le p_+<\fz, \quad 0<q_-\le q_+<\fz$$
and $\frac1p$, $\frac1q\in C^{\log}(\rn)$,
$s\in C_{\loc}^{\log}(\rn)\cap L^\fz(\rn)$ and
$\phi$ be a set function satisfying the conditions
(\textbf{S1}) and (\textbf{S2}).
If
\begin{equation}\label{trace1-x}
s_--\frac1{p_-}-(n-1)\lf[\frac1{\min\{1,p_-\}}-1\r]>0,
\end{equation}
then
\begin{equation*}
\mathop\mathrm{Tr}\btlve=
F_{p(\wz{\cdot},0),p(\wz{\cdot},0)}^{s(\wz{\cdot},0)
-\frac1{p(\wz{\cdot},0)},\wz \phi}(\rr^{n-1}).
\end{equation*}
\end{theorem}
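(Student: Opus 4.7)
The plan is to prove both inclusions of Theorem \ref{t-trace1} via the smooth atomic characterization from Theorem \ref{t-md}, combined with the well-definedness of $\mathop\mathrm{Tr}$ on $\btlve$ granted by Lemma \ref{l-welld}. I would follow the strategy sketched in the introduction: first reduce to the case where $p(\cdot)$ and $s(\cdot)$ depend only on $\wz x$ in a neighborhood of the hyperplane $\{x_n=0\}$, using an argument parallel to \cite[Theorem 3.13]{dhr09}, and then carry out the coefficient-level computation exactly as in \cite[Theorem 6.8]{ysiy}.

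For the direct inclusion $\mathop\mathrm{Tr}\btlve\subset F_{p(\wz\cdot,0),p(\wz\cdot,0)}^{s(\wz\cdot,0)-\frac1{p(\wz\cdot,0)},\wz\phi}(\nrr)$, I would decompose $f=\sum_{Q\in\cq^\ast}t_Qa_Q$ into $(K,L)$-smooth atoms with $K,L$ sufficiently large. Since $a_Q\subset 3Q$ with $Q=Q_{j(\wz k,k_n)}$, the restriction $a_Q(\cdot,0)$ vanishes unless $|k_n|\le 2$, and for those contributing cubes $2^{-j/2}a_Q(\cdot,0)$ is, up to a uniform multiplicative constant, a smooth atom on $\nrr$ associated with the $(n-1)$-dimensional dyadic cube $\wz Q_{j\wz k}$, inheriting the required vanishing moment and smoothness conditions from $a_Q$ by direct computation. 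Setting
$$\wz t_{\wz Q_{j\wz k}}:=2^{-j/2}\sum_{|k_n|\le 2}t_{Q_{j(\wz k,k_n)}},$$
the trace acquires the atomic decomposition $\mathop\mathrm{Tr}(f)=\sum_{\wz Q}\wz t_{\wz Q}\wz a_{\wz Q}$ on $\nrr$, so that invoking the $(n-1)$-dimensional analog of Theorem \ref{t-md}(i) reduces the task to a purely sequence-level estimate controlling the target sequence quasi-norm on $\nrr$ by $\|t\|_{\tlve}$.

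For the reverse inclusion, given $g$ in the target trace space with atomic decomposition $g=\sum_{\wz Q}\wz t_{\wz Q}\wz a_{\wz Q}$, I would construct an extension $f$ by tensor products. Fix $\eta\in\cs(\rr)$ supported in $(-1,1)$ with $\eta(0)=1$ and enough vanishing moments; for each contributing $\wz Q=\wz Q_{j\wz k}$, set
$$a_{Q_{j(\wz k,0)}}(\wz x,x_n):=C\,2^{j/2}\wz a_{\wz Q_{j\wz k}}(\wz x)\,\eta(2^jx_n),\qquad t_{Q_{j(\wz k,0)}}:=2^{-j/2}\wz t_{\wz Q_{j\wz k}},$$
with all other $t_Q$ set to zero. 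One checks, after adjusting $C$, that $\{a_Q\}$ are $(K,L)$-smooth atoms and that $\mathop\mathrm{Tr}(f)=g$. The bound $\|f\|_{\btlve}\ls\|g\|$ then follows from Theorem \ref{t-md}(i): because only one vertical slice of cubes carries mass, at each $\wz x$ the $\ell^{q(\cdot)}$-sum inside the $\tlve$-quasi-norm collapses to a single term in the $k_n$ variable, so one is left with an $\ell^{p(\wz\cdot,0)}$-sum coinciding, up to the factor $2^{-j/2}$ and the smoothness shift, with the trace-space quasi-norm of $\wz t$.

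The main technical obstacle is the sequence-norm comparison between $\tlve$ on $\rn$ and the target sequence space on $\nrr$: the $\ell^{q(\cdot)}$-summation in the former must be matched with an $\ell^{p(\wz\cdot,0)}$-summation in the latter, and the factor $2^{-j/2}$ together with the smoothness shift $s(\wz\cdot,0)-\frac1{p(\wz\cdot,0)}$ must balance precisely. This balancing is a Fubini-type computation once $p$ and $s$ are, by the reduction step, made independent of $x_n$ near the hyperplane; the log-H\"older continuity in the $x_n$-variable, together with the decay provided by \eqref{trace1-x}, is what permits this freezing without deteriorating the quasi-norm. The role of hypothesis \eqref{trace1-x} is precisely to ensure that one can choose $L\in(\frac n{\min\{1,p_-\}}-n-s_-,\fz)$ simultaneously with the analogous vanishing-moment condition needed for atoms on $\nrr$ (where the smoothness exponent is $s_--1/p_-$), so that Theorem \ref{t-md} may legitimately be invoked on both sides.
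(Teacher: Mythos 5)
Your overall strategy matches the paper's: atomic decomposition via Theorem \ref{t-md} in both directions, a preliminary reduction to $q=p$ with $p,s$ independent of $x_n$ near the hyperplane (the paper's Lemmas \ref{l-welld}--\ref{l-trac-ind}, relying on Proposition \ref{p-ec1}), the observation that only the cubes with $|k_n|\le 2$ contribute, and a tensor-product extension $a_{\wz Q}\otimes\eta_{\wz Q}$ for surjectivity. That is exactly how the paper proceeds. However, two steps in your sketch as written would fail.

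First, the direct-inclusion coefficient scaling is reversed. If $a_Q$ satisfies $|D^\alpha a_Q|\le 2^{(|\alpha|+n/2)j}$ on $\rn$, then $2^{-j/2}a_Q(\cdot,0)$ satisfies the $(n-1)$-dimensional normalization $|D^{\wz\alpha}\cdot|\le 2^{(|\wz\alpha|+(n-1)/2)j}$, so the atom is $\wz a_{\wz Q}=2^{-j/2}a_Q(\cdot,0)$ and consequently the coefficient must be $\lz_{\wz Q}=2^{+j/2}t_Q=[\ell(\wz Q)]^{-1/2}t_Q$, not $2^{-j/2}t_Q$. This is not a cosmetic slip: the factor $2^{j/2}$ combines with $|\wz Q|^{-1/2}=2^{(n-1)j/2}$ and the smoothness shift $-\frac1{p(\wz\cdot,0)}$ to produce precisely the factor $2^{-j}$ that the Fubini integration in $x_n$ supplies on the other side; with $2^{-j/2}$ the modular identity is off by $2^{-jp(\cdot)}$ and the estimate does not close. (You in fact use the correct inverse scaling $t_Q=2^{-j/2}\wz t_{\wz Q}$ in the extension step, so the two halves of the sketch are inconsistent with each other.)

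Second, the definition $\wz t_{\wz Q}:=2^{-j/2}\sum_{|k_n|\le 2}t_{Q_{j(\wz k,k_n)}}$ followed by ``$\mathop\mathrm{Tr}(f)=\sum_{\wz Q}\wz t_{\wz Q}\wz a_{\wz Q}$'' has no valid candidate for $\wz a_{\wz Q}$: the restrictions $a_{\wz Q\times[(i-1)\ell(\wz Q),i\ell(\wz Q))}(\cdot,0)$ are genuinely different functions for different $i$, so they cannot be pooled into a single coefficient times a single atom (a coefficient-dependent ``average'' would not satisfy the uniform atom bounds). The paper avoids this by keeping the finitely many families indexed by $i\in\{0,1,2\}$ separate, estimating each $\|\lz^{(i)}\|_{f_{p(\wz\cdot,0)}^{\beta(\wz\cdot,0),\wz\phi}(\nrr)}$ individually and then summing; you should do the same. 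With these two repairs your proposal would line up with the paper's proof.
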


\begin{remark}\label{r-trace-x}
(i) When $p,\ q,\ s$ and $\phi$ are as in Remark \ref{r-defi}(ii),
Theorem \ref{t-trace1} goes back to \cite[Theorem 6.8]{ysiy}.
Moreover, Theorem \ref{t-trace1} coincides with
the trace theorem for the classical Triebel-Lizorkin space $F_{p,q}^s(\rn)$
with constant
exponents (see \cite[Theorem 11.1 and p.\,134]{fj90}) and,
in this case, the condition \eqref{trace1-x} is optimal.

(ii) In the case that $\phi$ is as in Remark \ref{r-defi}(i),
it was proved in \cite[Theorem 3.13]{dhr09}
(see also \cite[Theorem 5.1(1)]{noi14}) that the conclusion of
Theorem \ref{t-trace1} is true if $s$ and $p$ satisfy that, for all $x\in\rn$,
\begin{equation}\label{condition}
s(x)-\frac1{p(x)}-(n-1)\lf[\frac1 {\min\{1,p(x)\}}-1\r]>\delta
\end{equation}
for some $\delta\in(0,\fz)$, which is a little weaker than \eqref{trace1-x}.
The reason that
the assumption \eqref{trace1-x}, in this case, is a little stronger than that
in \cite[Theorem 3.13]{dhr09} (see also \eqref{condition})
comes from an application
of Theorem \ref{t-md}, which, in this case, can be further refined; see Remark
\ref{r-mad}(ii).
\end{remark}

To prove Theorem \ref{t-trace1}, we first need to show that \eqref{d-trace}
converges in $\cs'(\rr^{n-1})$.

\begin{lemma}\label{l-welld}
Let $p,\ q,\ s$ and $\phi$ be as in Theorem \ref{t-trace1} satisfying
\eqref{trace1-x}. Then, for all
$f\in F_{p(\cdot),q(\cdot)}^{s(\cdot),\phi}(\rn)$,
$\mathop\mathrm{Tr}(f)\in \cs'(\rr^{n-1})$.
\end{lemma}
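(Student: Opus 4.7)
The strategy is to combine the atomic decomposition of Theorem~\ref{t-md}(ii) for $f$ on $\rn$ with a second application of Theorem~\ref{t-md}(i) on the lower-dimensional space $\rr^{n-1}$. First, I would apply Theorem~\ref{t-md}(ii) to write $f=\sum_{Q\in\cq^\ast}t_Q a_Q$ in $\cs'(\rn)$ with $(K,L)$-smooth atoms for some large $K,L\in\zz_+$ and with $\|\{t_Q\}_{Q\in\cq^\ast}\|_{\tlve}\le C\|f\|_{\btlve}$. For each $Q=Q_{jk}\in\cq^\ast$ with $k=(\wz k,k_n)\in\zz^{n-1}\times\zz$, the support condition~{\rm (A1)} forces $a_Q(\wz\cdot,0)\equiv 0$ unless $k_n\in\{-1,0,1\}$; hence $\mathop\mathrm{Tr}(f)$ splits into at most three subsums, each indexed by $(j,\wz k)\in\zz_+\times\zz^{n-1}$.

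For each fixed such $k_n$, set
$$\wz a^{(k_n)}_{\wz Q_{j\wz k}}(\wz y):=2^{-j/2}a_{Q_{j(\wz k,k_n)}}(\wz y,0),\qquad \wz t^{(k_n)}_{\wz Q_{j\wz k}}:=2^{j/2}\,t_{Q_{j(\wz k,k_n)}}.$$
Inspection of {\rm (A1)}, {\rm (A2)} and {\rm (A3)} shows that each $\wz a^{(k_n)}_{\wz Q}$ is, up to a harmless multiplicative constant, a $(K,L')$-smooth atom on $\rr^{n-1}$ supported near $\wz Q$ for any integer $L'$. By Remark~\ref{r-atom}(i) we may freely take $L'<0$, so that the vanishing-moment condition is vacuous; a direct algebraic check then shows that the hypothesis~\eqref{trace1-x} is precisely what is needed so that the atom-parameter constraint~\eqref{md7}, applied in dimension~$n-1$ to the target space $F_{p(\wz\cdot,0),p(\wz\cdot,0)}^{s(\wz\cdot,0)-1/p(\wz\cdot,0),\wz\phi}(\rr^{n-1})$, can be satisfied with some negative integer~$L'$. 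This is where~\eqref{trace1-x} enters.

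The crux of the argument is then to verify that the renormalized coefficient sequence $\{\wz t^{(k_n)}_{\wz Q}\}_{\wz Q\in\wz{\cq}^\ast}$ belongs to $f_{p(\wz\cdot,0),p(\wz\cdot,0)}^{s(\wz\cdot,0)-1/p(\wz\cdot,0),\wz\phi}(\rr^{n-1})$ with norm $\lesssim\|\{t_Q\}\|_{\tlve}$. For any dyadic cube $\wz P\subset\rr^{n-1}$ one pairs it with a companion $\rn$-dyadic cube $P$ of the same side-length lying within $O(1)$ dyadic translates of $\wz P\times\{0\}$; then $\wz\phi(\wz P)\sim\phi(P)$ by the compatibility condition~$(\textbf{S2})$, and for any function $F=F(\wz y)$, trivially extended to $\rn$ in the last variable, one has
$$\|F\|_{L^{p(\wz\cdot,0)}(\wz P)}\sim 2^{j_{\wz P}/p(\wz\cdot,0)}\,\|F\|_{L^{p(\cdot)}(P)},$$
an identity whose proof rests on the log-H\"older continuity of $1/p$ across the thin slab $\{|y_n|\le 2^{-j_{\wz P}}\}$. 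The factor $2^{j/p(\wz\cdot,0)}$ so produced absorbs precisely the trace-induced smoothness shift $-1/p(\wz\cdot,0)$, while a Sobolev-type device (for instance, Proposition~\ref{p-se1} together with the embedding $\ell^{q(\cdot)}\hookrightarrow\ell^{p(\wz\cdot,0)_-}$) bridges the inner exponent $q(\cdot)$ of $\tlve$ with the inner exponent $p(\wz\cdot,0)$ of the target sequence space. With this estimate in hand, Theorem~\ref{t-md}(i) applied in dimension~$n-1$ yields convergence of $\sum_{\wz Q}\wz t^{(k_n)}_{\wz Q}\wz a^{(k_n)}_{\wz Q}$ in $\cs'(\rr^{n-1})$, and summing the (at most) three subsums over $k_n$ completes the proof. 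The main obstacle will be the sequence-space bound of the above display: its delicate point is the consistent handling of the variable-exponent norms across the two dimensions and of the two set functions $\phi$ and $\wz\phi$, relying only on the log-H\"older continuity hypotheses and on the condition~\eqref{trace1-x}.
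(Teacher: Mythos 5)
Your route is genuinely different from the paper's and, as formulated, contains both a circularity and an unjustified key estimate. The paper's proof of Lemma~\ref{l-welld} is a soft, qualitative convergence argument: it pairs $\sum_{j,k} t_{Q_{jk}} a_{Q_{jk}}(\wz\cdot,0)$ directly against a test function $h\in\cs(\nrr)$, fattens the resulting $(n-1)$-dimensional integral to an $n$-dimensional one over the slab $\Delta_{k,j}=\nrr\times[k_n2^{-j},(k_n+1)2^{-j})$, and then applies the Sobolev-shift device ($\wz p:=p/r$ and $\wz s$ with $\wz s - n/\wz p = s - n/p$) together with H\"older's inequality to get a bound of order $2^{-j(\wz s_--r/p_-)}\|t\|_{\tlve}$, which sums geometrically because \eqref{trace1-x} guarantees that $r\in(0,\min\{1,p_-\})$ can be chosen with $\wz s_--r/p_->0$. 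At no point does it need to place the restricted coefficient sequence into a sequence space on $\rr^{n-1}$.

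You instead aim for the much stronger estimate $\bigl\|\{\wz t^{(k_n)}_{\wz Q}\}\bigr\|_{f_{p(\wz\cdot,0),p(\wz\cdot,0)}^{\beta(\wz\cdot,0),\wz\phi}(\nrr)}\ls\|t\|_{\tlve}$ and then invoke Theorem~\ref{t-md}(i) in dimension $n-1$. That estimate is precisely Step~1 in the proof of Theorem~\ref{t-trace1}, and the paper establishes it only \emph{after} normalizing so that $q=p$ and $p,s$ are independent of $x_n$ for $|x_n|\le 2$ --- a reduction carried out through Lemmas~\ref{l-trac1} and~\ref{l-trac-ind}, both of which rest on Lemma~\ref{l-welld}. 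So your argument is circular. Moreover, without that reduction the display $\|F\|_{L^{p(\wz\cdot,0)}(\wz P)}\sim 2^{j_{\wz P}/p(\wz\cdot,0)}\|F\|_{L^{p(\cdot)}(P)}$ is not a usable identity: the prefactor is not even a scalar, since $p(\wz\cdot,0)$ is a function, and the modular correction $\bigl[|F|/\lz\bigr]^{p(\wz y,y_n)-p(\wz y,0)}$ produced by the $y_n$-variation of $p$ across the slab is controllable only under a priori two-sided bounds on $|F|/\lz$ which you do not have. The embedding $\ell^{q(\cdot)}\hookrightarrow\ell^{(p(\wz\cdot,0))_-}$ you invoke to swap the inner exponent is also false in general (it would need $q(\cdot)\le (p(\wz\cdot,0))_-$ pointwise), and Proposition~\ref{p-se1} trades only the pair $(s,p)$ and never touches the inner exponent $q$. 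This is exactly why the paper first proves Lemma~\ref{l-welld} by the cruder pairing argument and establishes the exponent reductions and the sequence-space estimate only afterwards.
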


\begin{proof}
Let $f\in F_{p(\cdot),q(\cdot)}^{s(\cdot),\phi}(\rn)$.
Then, by Theorem \ref{t-md}, we can write
$$f=\sum_{Q\in\cq^\ast}t_Qa_Q$$ in $\cs'(\rn)$ and
$$\lf\|\{t_Q\}_{Q\in\cq^\ast}\r\|_{f_{p(\cdot),q(\cdot)}^{s(\cdot),\phi}(\rn)}
\ls\|f\|_{F_{p(\cdot),q(\cdot)}^{s(\cdot),\phi}(\rn)},$$
where, for each $Q\in\cq^\ast$, $a_Q$ is a ($K,\,L$)-smooth atom
supported near $Q$ of $F_{p(\cdot),q(\cdot)}^{s(\cdot),\phi}(\rn)$
with $K\in (s_++\log_2\wz c_1,\fz)$ and $L$ is as in \eqref{md7}.
Let
$$A:=\lf\{Q\in\cq^\ast:\
3\overline{Q}\cap\{(\wz{x},x_n)\in\rr^{n-1}\times \rr:\ x_n=0\}\neq\emptyset\r\},$$
where $\ov{Q}$ denotes the closure of $Q$ in $\rn$.
Since ${\rm supp}\,a_Q\subset 3Q$
for all $Q\in\cq^\ast$,
it follows that $a_Q(\wz{\cdot},0)=0$ if $Q\notin A$.
Observe that, if $Q_{jk}\in A$ with $j\in\zz_+$ and $k:=(k_1,\dots,k_n)\in\zz^n$,
then $|k_n|\le2$.
Therefore,
$$\sum_{j=0}^\fz\sum_{k\in\zz^n}t_{Q_{jk}}a_{Q_{jk}}(\wz{\cdot},0)
=\sum_{j=0}^\fz\sum_{{k\in\zz^n,\,|k_n|\le2}}t_{Q_{jk}}a_{Q_{jk}}(\wz{\cdot},0).$$
Thus, to complete the proof of Lemma \ref{l-welld},
it suffices to show that
\begin{equation}\label{welld-z}
\lim_{{N\to\fz,\,\Lambda\to\fz}}\sum_{j=0}^N
\sum_{\gfz{k\in\zz^n,\,|k_n|\le2}{|k|\le\Lambda}}t_{Q_{jk}}a_{Q_{jk}}(\wz{\cdot},0)
\end{equation}
exists in $\cs'(\rr^{n-1})$.
By \eqref{trace1-x}, we see that
\begin{eqnarray*}
&&s_--\frac n{p_-}+\frac{\min\{1,p_-\}}{p_-}(n-1)\\
&&\hs=s_--\frac 1{p_-}-\frac{n-1}{p_-}\lf(1-\min\{1,p_-\}\r)\\
&&\hs\ge s_--\frac 1{p_-}-\frac{n-1}{\min\{1,p_-\}}\lf(1-\min\{1,p_-\}\r)\\
&&\hs=s_--\frac 1{p_-}-(n-1)\lf[\frac 1{\min\{1,p_-\}}-1\r]>0,
\end{eqnarray*}
which implies that there exists $r\in(0,\min\{1,p_-\})$ such that
$$s_--\frac n{p_-}+\frac r{p_-}(n-1)>0.$$
Let $\wz p(\cdot)$ and $\wz s(\cdot)$ be as in the proof of Theorem \ref{t-md}.
Then $\wz s_--\frac r{p_-}>0$.
For all $j\in\zz_+$ and $k\in\zz^n$, let
$$\Delta_{k,j}:=\nrr\times[k_n2^{-j},(k_n+1)2^{-j}).$$
Then, by the smoothness condition (A3),
we know that, for all $h\in\cs(\nrr)$ and $j\in\zz_+$,
\begin{eqnarray*}
{\rm I}&:=&\lf|\lf\la\sum_{\gfz{k\in\zz^n,\,|k_n|\le2}{|k|\le\Lambda}}
t_{Q_{jk}}a_{Q_{jk}}(\wz{\cdot},0),h(\wz{\cdot})\r\ra\r|\\
&\ls&2^{jn/2}\sum_{\gfz{k\in\zz^n,\,|k_n|\le2}{|k|\le\Lambda}}
|t_{Q_{jk}}|\int_{\nrr}\frac{(1+|\wz{y}|)^{-\delta}}
{(1+2^j|(\wz{y},0)-x_{Q_{jk}}|)^R}\,d\wz{y}\\
&\sim&2^{jn/2+j}\sum_{\gfz{k\in\zz^n,\,|k_n|\le2}{|k|\le\Lambda}}
|t_{Q_{jk}}|\int_{\Delta_{k,j}}\frac{(1+|\wz{y}|)^{-\delta}}
{(1+2^j|(\wz{y},0)-x_{Q_{jk}}|)^R}\,d\wz{y}dy_n\\
&\ls&2^{jn/2+j}\sum_{\gfz{k\in\zz^n,\,|k_n|\le2}{|k|\le\Lambda}}
|t_{Q_{jk}}|\int_{\rn}\frac{(1+|y|)^{-\delta}\chi_{\Delta_{k,j}}(y)}
{(1+2^j|y-x_{Q_{jk}}|)^R}\,dy,
\end{eqnarray*}
where $R\in(0,\fz)$ is chosen large enough,
and $\delta\in(0,\fz)$ will be determined later.
By an argument similar to that used in the proof of Theorem \ref{t-md},
we find that
\begin{eqnarray}\label{welld-y}
{\rm I}
\ls2^{-j(\wz s_--1)}\|t\|_{f_{p(\cdot),q(\cdot)}^{s(\cdot),\phi}(\rn)}
\lf\|\frac{\chi_{\Delta_{j}}}{(1+|\cdot|)^{\delta-\delta_0}}
\r\|_{L^{(\wz p(\cdot))^\ast}(\rn)},
\end{eqnarray}
where $\Delta_j:=\nrr\times[-2^{-j+1},2^{-j+1}]$.
On the other hand, by choosing $\delta$ large enough, we see that
\begin{eqnarray*}
&&\int_{\rn}\lf[\frac{\chi_{\Delta_j}(y)/(1+|y|)^{-\delta+\delta_0}}
{2^{-j/(\wz p(\cdot)^\ast)_+}}\r]^{(\wz p(y))^\ast}\,dy\\
&&\hs\ls 2^j\int_{\nrr\times[-2^{-j+1},2^{-j+1}]}
\lf[\frac1{(1+|\wz{y}|)^{\delta-\delta_0}}\r]^{(\wz p(\cdot)^\ast)_-}\,d\wz{y}dy_n\\
&&\hs\ls\int_{\nrr}
\lf[\frac1{(1+|\wz{y}|)^{\delta-\delta_0}}\r]^{(\wz p(\cdot)^\ast)_-}\,d\wz{y}\ls1,
\end{eqnarray*}
which, together with Remark \ref{r-vlpp}(ii), implies that
\begin{eqnarray*}
\lf\|\frac{\chi_{\Delta_{j}}}{(1+|\cdot|)^{\delta-\delta_0}}
\r\|_{L^{(\wz p(\cdot))^\ast}(\rn)}
\ls2^{-j/(\wz p(\cdot)^\ast)_+}\sim 2^{-j(1-\frac r{p_-})}.
\end{eqnarray*}
From this and \eqref{welld-y}, we deduce that
${\rm I}\ls 2^{-j(\wz s_--\frac r{p_-})}
\|t\|_{f_{p(\cdot),q(\cdot)}^{s(\cdot),\phi}(\rn)}$,
which, combined with the fact that $\wz s_--\frac r{p_-}>0$,
implies that \eqref{welld-z}
converges in $\cs'(\rr^{n-1})$.
This finishes the proof of Lemma \ref{l-welld}.
\end{proof}

Next we prove Theorem \ref{t-trace1} by beginning with
several technical lemmas.

\begin{lemma}\label{l-emdedding}
Let $p_i$, $q_i$, $s_i$ and $\phi$ be as in Definition \ref{d-tl}
with $p,\ q,\ s$ replaced by $p_i,\ q_i,\ s_i$, respectively,
where $i\in\{1,\,2\}$.
If $s_1\le s_2$, $p_1\le p_2$, $(p_1)_\fz=(p_2)_\fz$ and
$q_1\ge q_2$, then
$$F_{p_2(\cdot), q_2(\cdot)}^{s_2(\cdot),\phi}(\rn)
\hookrightarrow F_{p_1(\cdot), q_1(\cdot)}^{s_1(\cdot),\phi}(\rn).$$
\end{lemma}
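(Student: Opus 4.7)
The plan is to split the argument into two parts: a pointwise comparison of the inner quasi-norm (using $s_1\le s_2$ and $q_1\ge q_2$), followed by a uniform-in-$P$ embedding between the outer variable Lebesgue spaces (using $p_1\le p_2$ together with the asymptotic coincidence $(p_1)_\infty=(p_2)_\infty$).

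Fix $f\in F_{p_2(\cdot),q_2(\cdot)}^{s_2(\cdot),\phi}(\rn)$, an admissible pair $(\vz,\Phi)$, and a dyadic cube $P\in\cq$. For $i\in\{1,2\}$ set
$$G_i^P(x):=\Big\{\sum_{j=(j_P\vee 0)}^{\infty}\bigl[2^{js_i(x)}|\vz_j\ast f(x)|\bigr]^{q_i(x)}\Big\}^{1/q_i(x)},\quad x\in P.$$
Since $j\ge 0$ and $s_1\le s_2$, we have $2^{js_1(x)}\le 2^{js_2(x)}$ at every $x$. Combining this with the elementary monotonicity $\|\cdot\|_{\ell^{q_1(x)}}\le\|\cdot\|_{\ell^{q_2(x)}}$ for non-negative sequences when $q_1(x)\ge q_2(x)$ (seen by normalizing the $\ell^{q_2(x)}$ norm to $1$, which forces each entry into $[0,1]$), one obtains the pointwise inequality $G_1^P(x)\le G_2^P(x)$ on $P$.

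The analytical heart of the proof is then the uniform-in-$P$ variable Lebesgue embedding
$$\|g\|_{L^{p_1(\cdot)}(P)}\le C_0\,\|g\|_{L^{p_2(\cdot)}(P)},$$
valid for every measurable $g$ and every cube $P$, with $C_0$ depending only on $p_1,p_2$. Defining $r(\cdot)\in(0,\fz]$ by $\frac{1}{r(x)}=\frac{1}{p_1(x)}-\frac{1}{p_2(x)}\ge 0$, a variable-exponent H\"older inequality reduces this to the uniform bound $\|\chi_\rn\|_{L^{r(\cdot)}(\rn)}<\fz$. Here all three hypotheses on $p_1,p_2$ work together: the condition $\frac{1}{p_i}\in C^{\log}(\rn)$ makes $\frac{1}{r}$ log-H\"older continuous, while $(p_1)_\infty=(p_2)_\infty$ forces $\frac{1}{r(x)}\to 0$, equivalently $r(x)\gs\log(e+|x|)$ at infinity; the latter decay is exactly enough to make $\int_\rn \lambda^{-r(x)}\,dx$ finite for $\lambda$ sufficiently large.

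Combining the pointwise comparison with this uniform embedding and the lattice property of $L^{p_1(\cdot)}$ (Remark \ref{r-vlpp}(iv)), for every dyadic cube $P$,
$$\frac{1}{\phi(P)}\|G_1^P\|_{L^{p_1(\cdot)}(P)}\le\frac{1}{\phi(P)}\|G_2^P\|_{L^{p_1(\cdot)}(P)}\le\frac{C_0}{\phi(P)}\|G_2^P\|_{L^{p_2(\cdot)}(P)}\le C_0\,\|f\|_{F_{p_2(\cdot),q_2(\cdot)}^{s_2(\cdot),\phi}(\rn)};$$
taking the supremum over dyadic cubes yields the desired embedding. The main obstacle is the uniform Lebesgue embedding; once that is handled by the H\"older/log-decay argument above (and, in the regime $(p_1)_-<1$, by first raising to a suitable power so as to reduce to the case $p_i\ge 1$), the rest of the proof is purely structural.
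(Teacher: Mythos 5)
Your proof is correct and follows essentially the same route as the paper's. The pointwise comparison of the inner $\ell^{q(\cdot)}$-sums corresponds to the paper's invocation of $s_1\le s_2$ together with the elementary inequality \eqref{simple-ineq}, and the uniform Lebesgue bound $\|g\|_{L^{p_1(\cdot)}(P)}\ls\|g\|_{L^{p_2(\cdot)}(P)}$ is exactly the content of \cite[Proposition 6.5]{dhr09}, which the paper cites as a black box while you re-derive it from the variable H\"older inequality and the log-decay of $1/p_1-1/p_2$.
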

\begin{proof}
By \cite[Proposition 6.5]{dhr09}, we find that
$$L^{p_2(\cdot)}(\rn)\hookrightarrow L^{p_1(\cdot)}(\rn).$$
From this, $s_1\le s_2$ and \eqref{simple-ineq}, we can easily deduce the
desired conclusion,
the details being
omitted. This finishes the proof of Lemma \ref{l-emdedding}.
\end{proof}

\begin{lemma}\label{l-trac1}
Let $p_i$, $q_i$, $s_i$ and $\phi$ be as in Theorem \ref{t-trace1}
with $p,\ q,\ s$ replaced by $p_i,\ q_i,\ s_i$, where $i\in\{1,\,2\}$.
Assume that $s_1=s_2$ and $p_1=p_2$ on $\rr_+^n$ or $\rr_-^n$, and
that $s_1\le s_2$ and $p_1\le p_2$. If
\begin{equation}\label{trac1-x}
(s_2)_--\frac1{(p_2)_-}-(n-1)\lf[\frac{1}{\min\{1,(p_2)_-\}-1}\r]>0,
\end{equation}
then
$$\mathop\mathrm{Tr} F_{p_1(\cdot),q_1(\cdot)}^{s_1(\cdot),\phi}(\rn)
= \mathop\mathrm{Tr} F_{p_2(\cdot),q_2(\cdot)}^{s_2(\cdot),\phi}(\rn);$$
moreover, if $q(\cdot)$ is as in Theorem \ref{t-trace1}, then
$$\mathop\mathrm{Tr} F_{p_1(\cdot),q_1(\cdot)}^{s_1(\cdot),\phi}(\rn)
= \mathop\mathrm{Tr} F_{p_1(\cdot),q(\cdot)}^{s_1(\cdot),\phi}(\rn).$$
\end{lemma}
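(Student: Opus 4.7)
First I will observe that the second conclusion of the lemma follows immediately from the first by the specialization $p_2:=p_1$, $s_2:=s_1$, and $q_2:=q$: under these choices the structural hypotheses ($s_1=s_2$ and $p_1=p_2$ on a half-space, $s_1\le s_2$, $p_1\le p_2$) are automatic, and \eqref{trac1-x} coincides with \eqref{trace1-x}. So it is enough to prove the first conclusion, and by the symmetry of the method I will describe the inclusion $\mathop\mathrm{Tr} F_{p_1,q_1}^{s_1,\phi}(\rn)\subset \mathop\mathrm{Tr} F_{p_2,q_2}^{s_2,\phi}(\rn)$ in detail, noting that the reverse inclusion is obtained by the same argument with the roles of the two spaces interchanged. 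Without loss of generality I assume that $p_1=p_2$ and $s_1=s_2$ on $\rr_+^n$.

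Given $f\in F_{p_1,q_1}^{s_1,\phi}(\rn)$, I will invoke Theorem \ref{t-md}(ii) with $K,L\in\nn$ chosen large enough that the resulting $(K,L)$-smooth atoms are admissible for both spaces $F_{p_i,q_i}^{s_i,\phi}(\rn)$, obtaining $f=\sum_{Q\in\cq^\ast}t_Q a_Q$ with $\|\{t_Q\}\|_{f_{p_1,q_1}^{s_1,\phi}(\rn)}\ls\|f\|_{F_{p_1,q_1}^{s_1,\phi}(\rn)}$. As observed in the proof of Lemma \ref{l-welld}, only the boundary-touching atoms, namely those with $Q=Q_{jk}$ and $k_n\in\{-1,0,1\}$, contribute to $\mathop\mathrm{Tr}(f)$. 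For such a $Q$ with $k_n=-1$ I introduce the reflected function $\wz a_Q(\wz x,x_n):=a_Q(\wz x,-x_n)$; a direct check against (A1)--(A3) of Definition \ref{d-atom} shows that $\wz a_Q$ is again a $(K,L)$-smooth atom, associated now to the reflected cube $\wz Q:=Q_{j(\wz k_1,\dots,\wz k_{n-1},0)}\subset\rr_+^n$, and clearly $\wz a_Q(\wz{\cdot},0)=a_Q(\wz{\cdot},0)$. Discarding the non-boundary atoms (which vanish at $\{x_n=0\}$) and replacing each $k_n=-1$ atom by its reflection produces
\[
\wz f:=\sum_{Q=Q_{jk},\,k_n\in\{0,1\}}t_Q\, a_Q+\sum_{Q=Q_{jk},\,k_n=-1}t_Q\, \wz a_Q,
\]
with $\mathop\mathrm{Tr}(\wz f)=\mathop\mathrm{Tr}(f)$ in $\cs'(\rr^{n-1})$ (the convergences in $\cs'(\rn)$ and $\cs'(\rr^{n-1})$ are verified by repeating the estimate of Lemma \ref{l-welld}), and in which every nontrivial atom is associated to a cube lying in $\rr_+^n$.

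To conclude $\wz f\in F_{p_2,q_2}^{s_2,\phi}(\rn)$ I will apply Theorem \ref{t-md}(i), reducing to estimating the new coefficient sequence $\{\wz t_{\wz Q}\}$ in $f_{p_2,q_2}^{s_2,\phi}(\rn)$. Since this sequence is supported on cubes inside $\rr_+^n$ where $p_1\equiv p_2$ and $s_1\equiv s_2$, the indicators $\chi_{\wz Q}$ and prefactors $|\wz Q|^{-s_2(\cdot)/n-1/2}$ in the sequence norm restrict everything to $\rr_+^n$, on which the $L^{p_2(\cdot)}$ and $L^{p_1(\cdot)}$ norms agree; hence
\[
\|\{\wz t_{\wz Q}\}\|_{f_{p_2,q_2}^{s_2,\phi}(\rn)}=\|\{\wz t_{\wz Q}\}\|_{f_{p_1,q_2}^{s_1,\phi}(\rn)}.
\]
The technical heart of the proof is to bound this last norm by $\|\{t_Q\}\|_{f_{p_1,q_1}^{s_1,\phi}(\rn)}$, thereby exchanging $q_2(\cdot)$ for $q_1(\cdot)$ on sequences vertically localized to $\wz k_n\in\{0,1\}$. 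The key point is that at each $(\wz x,x_n)\in\rr_+^n$ only scales $j$ with $2^jx_n<2$ enter the inner $\ell^{q_2(\cdot)}$-sum; decomposing the $x_n$-integration into dyadic annuli $\{2^{-j-1}\le x_n<2^{-j}\}$ and applying Fubini will absorb this truncated sum into the $L^{p_1(\cdot)}$ integration in the transverse variable, yielding a bound independent of $q_2$, which in the constant-exponent case reproduces the characteristic $1/p$ loss of smoothness underlying Theorem \ref{t-trace1}.

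I expect this dyadic-shell comparison to be the main obstacle, since it must be carried out in the variable-exponent framework. I plan to model it on the trace computations of \cite[Theorem 6.8]{ysiy} and \cite[Theorem 3.13]{dhr09}, invoking the vector-valued convolution inequality Lemma \ref{l-estimate2} together with the log-H\"older regularity \eqref{ve1} of $p$, $q$ and $s$ to manage the variability of the exponents.
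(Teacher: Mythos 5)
Your overall strategy --- reflect boundary-adjacent atoms into one half-space where the exponents agree, then exchange the $q$-exponent on vertically localized sequences --- is genuinely different from the paper's. The paper instead sandwiches each $F_{p_i(\cdot),q_i(\cdot)}^{s_i(\cdot),\phi}(\rn)$ between spaces with \emph{constant} $q$-exponents $r_0:=\min\{(q_1)_-,(q_2)_-\}$ and $r_1:=\max\{(q_1)_+,(q_2)_+\}$ using Lemma \ref{l-emdedding}, and reduces the entire proof to a single one-sided trace inclusion $\mathop\mathrm{Tr} F_{p_1(\cdot),r_1}^{s_1(\cdot),\phi}(\rn)\subset\mathop\mathrm{Tr} F_{p_2(\cdot),r_0}^{s_2(\cdot),\phi}(\rn)$, which is established by combining Theorem \ref{t-md} with Proposition \ref{p-ec1}, following the template of \cite[Lemma 7.2]{dhr09}. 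That reduction sidesteps exactly the variable-$q$ exchange you are attacking head-on.

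There is a real gap in the step you call the technical heart. After reflection, the nonzero coefficients sit on cubes $Q_{j\widetilde k}\subset\rr_+^n$ with $\widetilde k_n\in\{0,1\}$. At a point $(\wz{x},x_n)$ with $x_n\in[2^{-m-1},2^{-m})$, the cube $Q_{j\widetilde k}$ with $\widetilde k_n=0$ contains that point for \emph{every} $j\le m+1$; only the $\widetilde k_n=1$ cubes pin down a single scale. So the inner $\ell^{q_2(\cdot)}$-sum over $j$ at a fixed point has a nontrivial tail, and ``decomposing into dyadic annuli and applying Fubini'' does not by itself explain why the value of $q$ drops out. The tool the paper uses for precisely this is Proposition \ref{p-ec1}: replace each $\chi_Q$ by $\chi_{E_Q}$ with $E_Q\subset 3Q$, $|E_Q|\gs|Q|$, where the $E_Q$ can be chosen both to lie in the half-space on which $p_1\equiv p_2$ and $s_1\equiv s_2$ \emph{and} to be pairwise disjoint across scales; then at each point at most one term survives in the $\ell^q$-sum, so the sequence norm of a boundary-localized sequence is literally $q$-independent (and no reflection of atoms is needed). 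Without Proposition \ref{p-ec1} or an equivalent disjointification, your argument does not close.

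Two smaller issues. The ``reverse inclusion by symmetry'' claim is not free: the hypotheses $s_1\le s_2$, $p_1\le p_2$, and \eqref{trac1-x} imposed only on $(s_2,p_2)$ are not symmetric in $i$, so you must say what takes their place when the roles are swapped (in the paper this asymmetry is absorbed into the embedding chain through $r_0$ and $r_1$). Also, deriving the ``moreover'' statement by the specialization $p_2:=p_1$, $s_2:=s_1$, $q_2:=q$ requires \eqref{trac1-x} to hold for $(s_1,p_1)$, which is \emph{not} implied by \eqref{trac1-x} for $(s_2,p_2)$ since $s_1\le s_2$ and $p_1\le p_2$ push the left-hand side downward. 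The safe route is to apply the first conclusion twice --- once with $q_1$ and once with $q$ in place of $q_1$, both against the same $(p_2,q_2,s_2)$ --- and compare the two resulting identities.
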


To prove Lemma \ref{l-trac1}, we need the following conclusion.

\begin{proposition}\label{p-ec1}
Let $p$, $q$, $s$, $\phi$ be as in Definition \ref{d-tl} and $\delta\in(0,1)$.
 Suppose
that, for each $Q\in\cq^\ast$,
$E_Q\subset 3Q$ is a measurable set with $|E_Q|\ge \delta |Q|$.
Then, for all $t:=\{t_Q\}_{Q\in\cq^\ast}\subset\cc$, $t\in \tlve$ if and
only if $\|t\|_{\wz\tlve}<\fz$,
where
\begin{eqnarray*}
&&\|t\|_{\wz\tlve}\\
&&\hs:=\sup_{P\in\cq}\frac1{\phi(P)}
\lf\|\lf\{\sum_{j=(j_P\vee 0)}^\fz\sum_{\ell(Q)=2^{-j}}\lf[2^{js(\cdot)}|t_Q||Q|
^{-\frac12}\chi_{E_Q}\r]^{q(\cdot)}\r\}^{\frac1{q(\cdot)}}\r\|_{L^{p(\cdot)}(P)}.
\end{eqnarray*}
\end{proposition}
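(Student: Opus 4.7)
My plan is to prove the equivalence $\|t\|_\tlve\sim \|t\|_{\wz\tlve}$ by establishing both inequalities, using as the cornerstone the following pair of pointwise bounds. For every $Q\in\cq^\ast$ with $\ell(Q)=2^{-j_Q}$ and all $m,L\in(0,\fz)$,
\[
\chi_Q(x)\ls \eta_{j_Q,m+L}\ast\chi_{E_Q}(x) \quad\text{and}\quad \chi_{3Q}(x)\ls \eta_{j_Q,m+L}\ast\chi_Q(x),\qquad x\in\rn,
\]
with implicit constants depending only on $m$, $L$, $\delta$ and $n$. The first uses $|E_Q|\ge \delta|Q|$ together with the fact that, for $x\in Q$ and $y\in E_Q\subset 3Q$, $2^{j_Q}|x-y|\le 4$, so the integrand in $\eta_{j_Q,m+L}\ast\chi_{E_Q}(x)$ is bounded below by $(1+4)^{-(m+L)}$; the second follows by the same kind of computation since $3Q\subset B(c_Q,2\ell(Q))$.

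For $\|t\|_\tlve\ls \|t\|_{\wz\tlve}$, fix $P\in\cq$ and choose $r\in(0,\min\{1,p_-,q_-\})$ so that $p(\cdot)/r,\,q(\cdot)/r\in(1,\fz)$. Writing $c_Q:=|t_Q||Q|^{-1/2}2^{js(\cdot)}$ for $\ell(Q)=2^{-j}$, set
\[
a_j:=\sum_{\ell(Q)=2^{-j},\,Q\subset P}c_Q\chi_Q, \qquad b_j:=\sum_{\ell(Q)=2^{-j},\,Q\subset P}c_Q\chi_{E_Q}.
\]
Using the first pointwise bound, the disjointness of level-$j$ dyadic cubes (so that $a_j^r=\sum_Q[|t_Q||Q|^{-1/2}]^r2^{rjs(\cdot)}\chi_Q$), and Lemma \ref{l-eta} applied with the exponent $rs(\cdot)$ (for which $C_{\log}(rs)\le C_{\log}(s)$, so the same $L$ works), one deduces $a_j^r(x)\ls \eta_{j,m}\ast b_j^r(x)$; at the final step, the bounded overlap of $\{E_Q\}_{\ell(Q)=2^{-j}}$ (at most $3^n$ sets meet at each point) provides the equivalence $\sum_Q X_Q^r\sim (\sum_Q X_Q)^r$. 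Invoking Lemma \ref{l-estimate2} on $L^{p(\cdot)/r}(\ell^{q(\cdot)/r}(\rn))$ yields $\|\{a_j\}\|_{L^{p(\cdot)}(\ell^{q(\cdot)})(\rn)}\ls \|\{b_j\}\|_{L^{p(\cdot)}(\ell^{q(\cdot)})(\rn)}$, and since $\operatorname{supp}b_j\subset 3P$ can be covered by $O(1)$ dyadic cubes at level $j_P\vee 0$ each of $\phi$-value $\ls\phi(P)$ by (\textbf{S1}) and (\textbf{S2}), the right-hand side is bounded by $\phi(P)\|t\|_{\wz\tlve}$.

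The reverse direction $\|t\|_{\wz\tlve}\ls \|t\|_\tlve$ is symmetric: starting from $\chi_{E_Q}\le\chi_{3Q}$ and the second pointwise bound, together with the same $r$-trick, Lemma \ref{l-eta} with $rs(\cdot)$, and Lemma \ref{l-estimate2}, one obtains the reverse inequality. The sum over $Q$ appearing in the $\wz\tlve$-expression localized to $P$ restricts automatically to cubes near $P$ (those $Q$ with $3Q\cap P\neq\emptyset$), which lie in a fixed dilate $CP$; decomposing $CP$ into $O(1)$ dyadic cubes at level $j_P\vee 0$ and invoking (\textbf{S1})/(\textbf{S2}) once more delivers the bound $\ls \phi(P)\|t\|_\tlve$.

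The main obstacle will be the careful execution of the $r$-trick: one must verify that the passage between $(\sum_Q X_Q)^r$ and $\sum_Q X_Q^r$ is valid with $n$-dependent constants only (disjointness of the $Q$'s on one side, bounded overlap of the $E_Q$'s on the other), that Lemma \ref{l-eta} can be applied with exponent $rs(\cdot)\in C_{\loc}^{\log}(\rn)\cap L^\fz(\rn)$, and that the hypotheses $1<(p/r)_-\le(p/r)_+<\fz$ and $1<(q/r)_-\le(q/r)_+<\fz$ required by Lemma \ref{l-estimate2} are secured by the choice of $r$. Handling the edge case $j_P\le 0$ (so that the outer sum starts at $j=0$ and the relevant enlargement of $P$ is interpreted at the level of side length $1$) requires some additional bookkeeping but no new ideas.
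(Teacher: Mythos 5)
Your proposal is correct and follows essentially the same route as the paper's proof: both rest on the pointwise bounds $\chi_Q\ls\eta_{j_Q,m+L}\ast\chi_{E_Q}$ and $\chi_{E_Q}\ls\eta_{j_Q,m+L}\ast\chi_Q$, the $r$-trick with disjointness of the $Q$'s on one side and bounded overlap of the $E_Q$'s on the other, Lemma \ref{l-eta} to absorb $2^{js(\cdot)}$ through the convolution, and Lemma \ref{l-estimate2} on $L^{p(\cdot)/r}(\ell^{q(\cdot)/r})$. Your write-up is somewhat more explicit than the paper's (which is quite terse) about the $\sum_Q X_Q^r\sim(\sum_Q X_Q)^r$ step, the verification $C_{\log}(rs(\cdot))\le C_{\log}(s)$, the conditions on $p/r$, $q/r$, and the covering of $3P$ (resp.\ a fixed dilate $CP$) by $O(1)$ dyadic cubes controlled via (\textbf{S1})/(\textbf{S2}); these are all genuinely needed and correctly supplied.
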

\begin{proof}
We first suppose that $\|t\|_{\wz\tlve}<\fz$ and show that $t\in\tlve$.
Notice that, for all $m\in(n,\fz)$, $Q\in\cq^\ast$ and $x\in Q$,
$$\chi_Q(x)\ls\eta_{j_Q,m+C_{\log}(s)}\ast \chi_{E_Q}(x).$$ From this, Lemmas
\ref{l-estimate2} and \ref{l-eta}, we deduce that
\begin{eqnarray*}
&&\|t\|_{\tlve}\\
&&\hs=\sup_{P\in\cq}\frac1{\phi(P)}
\lf\|\lf\{\sum_{j=(j_P\vee 0)}^\fz\sum_{\ell(Q)=2^{-j}}\lf[2^{jrs(\cdot)}
|t_Q|^r|Q|
^{-\frac r2}\chi_{Q}\r]^{\frac{q(\cdot)}r}
\r\}^{\frac1{q(\cdot)}}\r\|_{L^{p(\cdot)}(P)}\\
&&\hs\ls\sup_{P\in\cq}\frac1{\phi(P)}
\lf\|\lf\{\sum_{j=(j_P\vee 0)}^\fz\lf[\eta_{j,m}\ast\lf(
2^{jrs(\cdot)}\sum_{\gfz{Q\subset P}{\ell(Q)=2^{-j}}}|t_Q|^r
\frac{\chi_{E_{Q}}}{|Q|^{\frac r2}}\r)\r]^\frac{q(\cdot)}{r}
\r\}^{\frac1{q(\cdot)}}
\r\|_{L^{p(\cdot)}(P)}\\
&&\hs\ls\sup_{P\in\cq}\frac1{\phi(P)}
\lf\|\lf\{\sum_{j=(j_P\vee 0)}^\fz\lf[2^{js(\cdot)}
\sum_{\gfz{Q\subset P}{\ell(Q)=2^{-j}}}|t_Q||Q|
^{-\frac12}\chi_{E_{Q}}\r]^{q(\cdot)}\r\}^{\frac1{q(\cdot)}}
\r\|_{L^{p(\cdot)}(P)}\\
&&\hs\sim\|t\|_{\wz\tlve},
\end{eqnarray*}
which implies that $t\in\tlve$.

Conversely, by an argument similar to the above
and the fact that, for all $m\in(n,\fz)$, $Q\in\cq^\ast$ and $x\in E_Q$,
$\chi_{E_Q}(x)\ls \eta_{j_Q,m+C_{\log(s)}}\ast \chi_{Q}(x)$, we conclude that,
for all $t\in\tlve$,
$\|t\|_{\wz \tlve}\ls \|t\|_{\tlve}$.
This finishes the proof of Proposition \ref{p-ec1}.
\end{proof}

\begin{proof}[Proof of Lemma \ref{l-trac1}]
From Remark \ref{r-log}(i) and the condition that $p_1=p_2$ on
$\rr_+^n$ or $\rr_-^n$,
we deduce that $(p_1)_\fz=(p_2)_\fz$.
Let $r_0:=\min\{(q_2)_-,(q_1)_-\}$ and
$r_1:=\max\{(q_2)_+,(q_1)_+\}$.
Then, by Lemma \ref{l-emdedding} and \eqref{simple-ineq},
we see that
\begin{equation}\label{trac1-y}
F_{p_2(\cdot),r_0}^{s_2(\cdot),\phi}(\rn)
\hookrightarrow F_{p_1(\cdot),q_1(\cdot)}^{s_1(\cdot),\phi}(\rn)
\hookrightarrow F_{p_1(\cdot),r_1}^{s_1(\cdot),\phi}(\rn)
\end{equation}
and
\begin{equation}\label{trac1-z}
F_{p_2(\cdot),r_0}^{s_2(\cdot),\phi}(\rn)
\hookrightarrow F_{p_2(\cdot),q_2(\cdot)}^{s_2(\cdot),\phi}(\rn)
\hookrightarrow F_{p_1(\cdot),r_1}^{s_1(\cdot),\phi}(\rn).
\end{equation}
By Lemma \ref{l-welld} and an argument similar to that used in the proof
of \cite[Lemma 7.2]{dhr09}, with \cite[Theorem 3.8 and Lemma 7.1]{dhr09}
replaced by Theorem \ref{t-md} and  Proposition \ref{p-ec1},
we conclude that, for all $f\in F_{p_1(\cdot),r_1}^{s_1(\cdot),\phi}(\rn)$,
$\mathop\mathrm{Tr}(f)$ exists in $\cs'(\nrr)$ and
$
\mathop\mathrm{Tr} F_{p_1(\cdot),r_1}^{s_1(\cdot),\phi}(\rn)
\subset \mathop\mathrm{Tr} F_{p_2(\cdot),r_0}^{s_2(\cdot),\phi}(\rn).
$
From this, \eqref{trac1-y} and \eqref{trac1-z}, we deduce that
\begin{eqnarray*}
\mathop\mathrm{Tr} F_{p_1(\cdot),q_1(\cdot)}^{s_1(\cdot),\phi}(\rn)
&&\subset \mathop\mathrm{Tr} F_{p_1(\cdot),r_1}^{s_1(\cdot),\phi}(\rn)
\subset\mathop\mathrm{Tr} F_{p_2(\cdot),r_0}^{s_2(\cdot),\phi}(\rn)
\subset \mathop\mathrm{Tr} F_{p_2(\cdot),q_2(\cdot)}^{s_2(\cdot),\phi}(\rn)\\
&&\subset \mathop\mathrm{Tr} F_{p_1(\cdot),r_1}^{s_1(\cdot),\phi}(\rn)
\subset\mathop\mathrm{Tr} F_{p_2(\cdot),r_0}^{s_2(\cdot),\phi}(\rn)
\subset\mathop\mathrm{Tr} F_{p_1(\cdot),q_1(\cdot)}^{s_1(\cdot),\phi}(\rn),
\end{eqnarray*}
which completes the proof of Lemma \ref{l-trac1}.
\end{proof}

\begin{remark}\label{r-trace}
By the proof of Lemma \ref{l-trac1}, we see that
the condition \eqref{trac1-x} is only used to
ensure that $\mathop\mathrm{Tr}F_{p_2(\cdot),q_2(\cdot)}^{s_2(\cdot),\phi}(\rn)$
exists in $\cs'(\nrr)$. Thus, by an argument similar to that used in the proof of
Lemma \ref{l-trac1}, we have the following conclusion, the details being
omitted. Under the same assumption as in Lemma \ref{l-trac1},
if, for all $f\in F_{p_2(\cdot),q_2(\cdot)}^{s_2(\cdot),\phi}(\rn)$,
the trace of $f$ defined as in \eqref{d-trace} exists in $\cs'(\nrr)$,
then, for all $g\in F_{p_1(\cdot),q_1(\cdot)}^{s_1(\cdot),\phi}(\rn)$,
the trace of $g$ defined as in \eqref{d-trace} also exists in $\cs'(\nrr)$;
moreover,
$$\mathop\mathrm{Tr}F_{p_1(\cdot),q_1(\cdot)}^{s_1(\cdot),\phi}(\rn)
= \mathop\mathrm{Tr}F_{p_2(\cdot),q_2(\cdot)}^{s_2(\cdot),\phi}(\rn).$$
\end{remark}

\begin{lemma}\label{l-trac-ind}
Let $p_i$, $q$, $s_i$ be as in Theorem \ref{t-trace1} with $p$ and $s$ replaced
by $p_i$ and $s_i$, $i\in\{1,2\}$.
Assume that $s_1(x)=s_2(x)$ and $p_1(x)=p_2(x)$ for all $x\in\rr^{n-1}\times\{0\}$.
If \eqref{trace1-x} is satisfied with $(s,p)$ replaced, respectively,
by $(s_1,p_1)$ and $(s_2,p_2)$, then
$$\mathop\mathrm{Tr} F_{p_1(\cdot),q(\cdot)}^{s_1(\cdot),\phi}(\rn)
=\mathop\mathrm{Tr} F_{p_2(\cdot),p_2(\cdot)}^{s_2(\cdot),\phi}(\rn).$$
\end{lemma}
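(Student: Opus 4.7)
The plan is to chain four applications of Lemma~\ref{l-trac1} via three auxiliary exponent systems built by a max-construction. First, observe that $(p_1)_\fz = (p_2)_\fz$: by Remark~\ref{r-log}(i), $(p_i)_\fz = \lim_{|x|\to\fz} p_i(x)$, and restricting to $x = (x', 0) \in \nrr \times \{0\}$ with $|x'| \to \fz$, on which $p_1 = p_2$, forces these two limits to coincide. Now define three auxiliary exponent pairs: set $(s_5, p_5) := (\max\{s_1, s_2\}, \max\{p_1, p_2\})$ on all of $\rn$; set $(s_3, p_3) := (s_1, p_1)$ on $\rr_+^n$ and $(s_3, p_3) := (s_5, p_5)$ on $\rr_-^n$; and set $(s_4, p_4) := (s_2, p_2)$ on $\rr_-^n$ and $(s_4, p_4) := (s_5, p_5)$ on $\rr_+^n$. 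Since $s_1 = s_2$ and $p_1 = p_2$ on $\nrr \times \{0\}$, each spliced pair is continuous across the interface $\{x_n = 0\}$.

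Second, I verify the admissibility of $(s_j, p_j)$ for $j \in \{3, 4, 5\}$. The inequality $|\max\{a_1, b_1\} - \max\{a_2, b_2\}| \le \max\{|a_1 - a_2|, |b_1 - b_2|\}$ immediately yields $s_5 \in C_{\loc}^{\log}(\rn) \cap L^\fz(\rn)$ and $1/p_5 \in C^{\log}(\rn)$, with $(p_5)_\fz = \max\{(p_1)_\fz, (p_2)_\fz\} = (p_1)_\fz$. For $s_3, s_4$ and $1/p_3, 1/p_4$, the log-Hölder estimate at points $x, y$ on opposite sides of $\{x_n = 0\}$ is obtained by inserting the projection $z := (y', 0)$: on each half-space the spliced exponent is log-Hölder, and $|x - z|, |y - z| \le |x - y|$ combined with $s_3(z) = s_5(z)$, $1/p_3(z) = 1/p_5(z)$ (since $s_1 = s_2$, $p_1 = p_2$ at $z$) yields the required estimate across the interface. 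Moreover, $(s_j)_- \ge \min\{(s_1)_-, (s_2)_-\}$ and $(p_j)_- \ge \min\{(p_1)_-, (p_2)_-\}$, and since the left-hand side of \eqref{trace1-x} is nondecreasing in $s_-$ and in $p_-$, condition \eqref{trace1-x} passes to each $(s_j, p_j)$; by Lemma~\ref{l-welld} the trace is then well defined in $\cs'(\nrr)$ for each of the five function spaces involved.

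Third, I chain Lemma~\ref{l-trac1} along the successive pairs. By construction, $(s_1, p_1)$ and $(s_3, p_3)$ coincide on $\rr_+^n$, with $s_1 \le s_3$ and $p_1 \le p_3$ globally; $(s_3, p_3)$ and $(s_5, p_5)$ coincide on $\rr_-^n$, with $s_3 \le s_5$ and $p_3 \le p_5$; $(s_4, p_4)$ and $(s_5, p_5)$ coincide on $\rr_+^n$, with $s_4 \le s_5$ and $p_4 \le p_5$; and $(s_2, p_2)$ and $(s_4, p_4)$ coincide on $\rr_-^n$, with $s_2 \le s_4$ and $p_2 \le p_4$. Fixing a constant $q^* \in (0, \fz)$, Lemma~\ref{l-trac1} then yields
\begin{align*}
\mathop\mathrm{Tr} F_{p_1(\cdot), q(\cdot)}^{s_1(\cdot), \phi}(\rn)
&= \mathop\mathrm{Tr} F_{p_3(\cdot), q^*}^{s_3(\cdot), \phi}(\rn)
= \mathop\mathrm{Tr} F_{p_5(\cdot), q^*}^{s_5(\cdot), \phi}(\rn) \\
&= \mathop\mathrm{Tr} F_{p_4(\cdot), q^*}^{s_4(\cdot), \phi}(\rn)
= \mathop\mathrm{Tr} F_{p_2(\cdot), p_2(\cdot)}^{s_2(\cdot), \phi}(\rn),
\end{align*}
where the second conclusion of Lemma~\ref{l-trac1} is used at the first and last step to accommodate the endpoint $q$-indices $q(\cdot)$ and $p_2(\cdot)$ (both admissible by hypothesis).

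The main obstacle is verifying the log-Hölder continuity of the spliced exponents $s_3, s_4, 1/p_3, 1/p_4$ across the interface $\{x_n = 0\}$. This is precisely where the hypothesis $s_1 = s_2, p_1 = p_2$ on $\nrr \times \{0\}$ is essential: the projection trick $z = (y', 0)$ together with $|x - z|, |y - z| \le |x - y|$ reduces the cross-interface estimate to the log-Hölder moduli of $s_1, s_2$ (respectively $1/p_1, 1/p_2$) within each half-space, where they are already controlled by hypothesis.
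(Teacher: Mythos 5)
Your route is genuinely different from the paper's. The paper splices with pointwise \emph{minima}: it sets $\wz s(x):=\min\{s_1(x),s_2(x)\}$, $\wz p(x):=\min\{p_1(x),p_2(x)\}$, introduces half-space splices $\wz s_i,\wz p_i$ that agree with $s_i,p_i$ on $\rr_-^n$ and with $\wz s,\wz p$ on $\rr_+^n$, and chains through
$F_{p_1,q}^{s_1,\phi}\to F_{p_1,p_1}^{s_1,\phi}\to F_{\wz p_1,\wz p_1}^{\wz s_1,\phi}\to F_{\wz p,\wz p}^{\wz s,\phi}\to F_{\wz p_2,\wz p_2}^{\wz s_2,\phi}\to F_{p_2,p_2}^{s_2,\phi}$.
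Because the $\min$-exponents may violate \eqref{trace1-x}, the paper must invoke Remark \ref{r-trace} at each link to propagate trace-existence from the larger space along the chain. Your $\max$-construction ($s_5,p_5,s_3,p_3,s_4,p_4$) places the auxiliary exponents \emph{above} the given ones pointwise, so each $(s_j,p_j)$ inherits \eqref{trace1-x} outright and Lemma \ref{l-trac1} applies directly, without Remark \ref{r-trace}. Your chain is also one step shorter because you absorb the $q$-change into the endpoint applications. In addition, you explicitly verify, via the projection $z=(y',0)$ and $|x-z|\vee|y-z|\le|x-y|$, that the spliced exponents are log-H\"older across $\{x_n=0\}$ — a check the paper leaves implicit for its $\wz s_i,\wz p_i$. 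On balance your route is cleaner and more self-contained.

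However, there is one incorrect justification. You write that $(s_j)_-\ge\min\{(s_1)_-,(s_2)_-\}$ and $(p_j)_-\ge\min\{(p_1)_-,(p_2)_-\}$, and conclude by monotonicity of the left-hand side of \eqref{trace1-x} that \eqref{trace1-x} passes to $(s_j,p_j)$. This does not follow: the hypothesis gives positivity of $\Phi(s_-,p_-):=s_--\frac1{p_-}-(n-1)[\frac1{\min\{1,p_-\}}-1]$ at the two paired points $((s_1)_-,(p_1)_-)$ and $((s_2)_-,(p_2)_-)$, not at the mixed point $(\min\{(s_1)_-,(s_2)_-\},\min\{(p_1)_-,(p_2)_-\})$; with $n=2$ and, say, $(s_1)_-=1$, $(p_1)_-=2$, $(s_2)_-=10$, $(p_2)_-=0.2$, both $\Phi((s_i)_-,(p_i)_-)>0$ yet $\Phi(1,0.2)=-8<0$. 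The fix is immediate and actually uses a stronger fact your construction delivers for free: since $s_3\ge s_1$ and $p_3\ge p_1$ pointwise, one has $(s_3)_-\ge(s_1)_-$ and $(p_3)_-\ge(p_1)_-$, hence $\Phi((s_3)_-,(p_3)_-)\ge\Phi((s_1)_-,(p_1)_-)>0$ by monotonicity; analogously $(s_4,p_4)$ dominates $(s_2,p_2)$, and $(s_5,p_5)$ dominates either pair. Replace the $\min$-based lower bound with this paired, pointwise comparison and the step is sound.
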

\begin{proof}
For all $x\in\rn$ and $i\in\{1,2\}$, let $\wz s_i(x):=s_i(x)$ if $x\in\rr_-^n$
and $\wz s_i(x):=\min\{s_1(x),s_2(x)\}$ otherwise, and, for all $x\in\rn$,
$\wz s(x):=\min\{s_1(x),s_2(x)\}$.
Similarly, for $i\in\{1,2\}$,
let $\wz p_i(x):=p_i(x)$ if $x\in\rr_-^n$
and $$\wz p_i(x):=\min\{p_1(x),p_2(x)\}$$ otherwise, and, for all $x\in\rn$,
$$\wz p(x):=\min\{p_1(x),p_2(x)\}.$$ Then, by applying Lemma
\ref{l-trac1} and Remark \ref{r-trace},
we conclude that
\begin{eqnarray*}
\mathop\mathrm{Tr}F_{p_1(\cdot),q(\cdot)}^{s_1(\cdot),\phi}(\rn)
&=&\mathop\mathrm{Tr}F_{p_1(\cdot),p_1(\cdot)}^{s_1(\cdot),\phi}(\rn)
=\mathop\mathrm{Tr}F_{\wz p_1(\cdot),\wz p_1(\cdot)}^{\wz s_1(\cdot),\phi}(\rn)\\
&=&\mathop\mathrm{Tr}F_{\wz p(\cdot),\wz p(\cdot)}^{\wz s(\cdot),\phi}(\rn)
=\mathop\mathrm{Tr}F_{\wz p_2(\cdot),\wz p_2(\cdot)}^{\wz s_2(\cdot),\phi}(\rn)
=\mathop\mathrm{Tr}F_{p_2(\cdot),p_2(\cdot)}^{s_2(\cdot),\phi}(\rn),
\end{eqnarray*}
which completes the proof of Lemma \ref{l-trac-ind}.
\end{proof}

In what follows, let
$\cq(\rn):=\cq$ and $\cq^\ast(\rn):=\cq^\ast$. Denote by $\cq(\rr^{n-1})$ the set
of all dyadic cubes of $\rr^{n-1}$ and $\cq^\ast(\rr^{n-1})$ the set
of all dyadic cubes $\wz{Q}$ of $\rr^{n-1}$ with $\ell(\wz{Q})\le1$.
\begin{proof}[Proof of Theorem \ref{t-trace1}]
By Lemma \ref{l-trac-ind}, we may assume that $q=p$ with
$p$ and $s$ independent of the $n$-th coordinate $x_n$ with $|x_n|\le2$.
Indeed, let, for all $(\wz{x},x_n)\in\nrr\times[-2,2]$, $\wz p_0(\wz{x},x_n):=p(\wz{x},0)$.
Then $\wz p_0\in C^{\log}(\nrr\times[-2,2])$. By \cite[Proposition 4.1.7]{dhr11},
we find that $\wz p_0$ has an extension $\wz p\in C^{\log}(\rn)$
with $\wz p_-=(\wz p_0)_-$ and $\wz p_\fz=(\wz p_0)_\fz$.
Define $\wz s$ by setting, for all $(\wz{x},x_n)\in \nrr\times\rr$,
$\wz s(\wz{x},x_n):=s(\wz{x},0)$. Then it is easy to see that
$\wz s\in C_{\loc}^{\log}(\rn)\cap L^\fz(\rn)$. Moreover,
$\wz p$ and $\wz s$ are independent of the $n$-th coordinate
$x_n$ with $|x_n|\le2$,
and satisfy
$$\wz s_--\frac 1{\wz p_-}-(n-1)\lf[\frac1{\min\{1,\wz p_-\}}-1\r]>0.$$
Then, by Lemma \ref{l-trac-ind}, we see that
$$\mathop\mathrm{Tr} \btlve
=\mathop\mathrm{Tr} F_{\wz p(\cdot),\wz p(\cdot)}^{\wz s(\cdot),\phi}(\rn).$$

For notational simplicity, let, for all $\wz{x}\in\rr^{n-1}$,
$$\beta(\wz{x},0):=s(\wz{x},0)-\frac1{p(\wz{x},0)},$$
$$F_{p(\wz{\cdot},0)}^{\beta(\wz{\cdot},0),\wz\phi}(\rr^{n-1})
:=F_{p(\wz{\cdot},0),p(\wz{\cdot},0)}^{\beta(\wz{\cdot},0),\wz\phi}(\rr^{n-1})$$
and
$$f_{p(\wz{\cdot},0)}^{\beta(\wz{\cdot},0),\wz\phi}(\rr^{n-1})
:=f_{p(\wz{\cdot},0),p(\wz{\cdot},0)}^{\beta(\wz{\cdot},0),\wz\phi}(\rr^{n-1}).$$
We finish the proof of Theorem \ref{t-trace1} by two steps.

\emph{Step 1)} We show that, for all
$f\in\btlve$, $\mathop\mathrm{Tr}(f)\in\cs'(\nrr)$ and
\begin{eqnarray}\label{proof1}
\|\mathop\mathrm{Tr}(f)\|_{F_{p(\wz{\cdot},0)}
^{\beta(\wz{\cdot},0),\wz \phi}(\rr^{n-1})}
\ls \|f\|_{F_{p(\cdot),p(\cdot)}^{s(\cdot),\phi}(\rn)}.
\end{eqnarray}
Without loss of generality, we may assume that
$\|f\|_{F_{p(\cdot),p(\cdot)}^{s(\cdot),\phi}(\rn)}=1$.
By Theorem \ref{t-md}, we see that
$$f=\sum_{Q\in\cq^\ast(\rn)}t_Qa_Q$$
in $\cs'(\rn)$,
where, for all $Q\in\cq^\ast$, $a_Q$ is a ($K,\,L$)-smooth atom supported near $Q$ of
$F_{p(\cdot),p(\cdot)}^{s(\cdot),\phi}(\rn)$ with
\begin{equation}\label{4.9x}
K\in(s_++\max\{0,\log_2\wz c_1\},\fz)\ {\rm and}\
L\in\lf(\frac n{\min\{1,p_-\}}-n-s_-,\fz\r)
\end{equation}
 and
$t:=\{t_Q\}_{Q\in\cq^\ast(\rn)}\in f_{p(\cdot),p(\cdot)}^{s(\cdot),\phi}(\rn)$,
 which
can be chosen such that
\begin{equation}\label{trace-y}
\|t\|_{f_{p(\cdot),p(\cdot)}^{s(\cdot),\phi}(\rn)}
\ls\|f\|_{F_{p(\cdot),p(\cdot)}^{s(\cdot),\phi}(\rn)}.
\end{equation}
Since ${\rm supp}\,a_Q\subset 3Q$, it follows that, if $i\notin\{0,1,2\}$,
then $$a_{\wz{Q}\times[(i-1)\ell(\wz{Q}),i\ell(\wz{Q}))}(\wz{\cdot},0)=0,$$
which implies that
$\mathop\mathrm{Tr}(f)$ can be rewritten as
\begin{equation*}
\sum_{i=0}^2\sum_{\wz{Q}\in\cq^\ast(\rr^{n-1})}
t_{{\wz{Q}\times[(i-1)\ell(\wz{Q}),i\ell(\wz{Q}))}}a_{\wz{Q}\times[(i-1)\ell(\wz{Q}),
i\ell(\wz{Q}))}(\wz{\cdot},0).
\end{equation*}
Therefore, to show \eqref{proof1}, by Theorem \ref{t-md} again,
it can be reduced to prove that each
$$b_{\wz{Q}}^{(i)}:=[\ell(\wz{Q})]^\frac12a_{\wz{Q}\times[(i-1)\ell(\wz{Q}),i\ell(\wz{Q}))}$$
is a ($\wz K,\,\wz L$)-smooth atom of supported near $\wz{Q}$ of $F_{p(\wz{\cdot},0)}
^{\beta(\wz{\cdot},0),\wz \phi}(\rr^{n-1})$ with
\begin{equation}\label{trace-x1}
\wz K\in\lf((s(\wz{\cdot},0))_++\max\{0,\log_2\wz c_1\},\fz\r),
\end{equation}
\begin{equation}\label{trace-xx}
\wz L\in \lf(\frac{n-1}{\min\{1,(p(\wz{\cdot},0))_-\}}-(n-1)-(s(\wz{\cdot},0))_-,\fz\r)
\end{equation}
and
\begin{equation}\label{trace6}
\lf\|\lf\{\lz_{\wz{Q}}^{(i)}\r\}
_{\wz{Q}\in\cq^\ast(\rr^{n-1})}\r\|_
{f_{p(\wz{\cdot},0)}^{\beta(\wz{\cdot},0),\wz \phi}(\rr^{n-1})}<\fz,
\end{equation}
where, for all $\wz{Q}\in\cq^\ast(\rr^{n-1})$,
$$\lz_{\wz{Q}}^{(i)}:=[\ell(\wz{Q})]^{-\frac12}t_{\wz{Q}\times[(i-1)\ell(\wz{Q}),i\ell(\wz{Q}))}.$$

By \eqref{trace1-x}, we see that
$$\frac{n-1}{\min\{1,(p(\wz{\cdot},0))_-\}}-(n-1)-(s(\wz{\cdot},0))_-<0$$
and then, by Remark \ref{r-atom}(i), we know that the vanishing moment for
$(\wz K,\,\wz L)$-smooth atoms of
$F_{p(\wz{\cdot},0)}^{\beta(\wz{\cdot},0),\wz \phi}(\rr^{n-1})$ is avoid.
Since ${\rm supp}\,a_Q\subset 3Q$, $\wz K\le K$ and, for all $\alpha\in\zz_+^n$,
with $|\alpha|\le K$, and all $x\in\rn$,
$|D^\alpha a_Q(x)|\le 2^{(|\alpha|+n/2)j}$,
it follows that, for $i\in\{0,1,2\}$, $\wz\alpha\in\zz_+^n$, with
$|\wz\alpha|\le \wz K$, and all $x\in\rn$,
$$|D^{\wz\alpha} b_Q^{(i)}(x)|
\le 2^{(|\wz \alpha|+n/2)j}$$
and $\supp b_Q^{(i)}\subset 3\wz{Q}$.
Thus, for $i\in\{0,1,2\}$, $b_{\wz{Q}}^{(i)}$ is a ($\wz K,\,\wz L$)-smooth
atom supported near $\wz{Q}$ of
$F_{p(\wz{\cdot},0)}^{\beta(\wz{\cdot},0),\wz \phi}(\rr^{n-1})$ with
($\wz K,\,\wz L$) as in \eqref{trace-x1} and \eqref{trace-xx}.

 Let $\lz^{(i)}:=\{\lz_{\wz{Q}}^{(i)}\}_{\wz{Q}\in\cq^\ast(\rr^{n-1})}$, where $i\in\{0,1,2\}$.
Next we show that, for any given dyadic cube $\wz{P}\subset\rr^{n-1}$,
\begin{eqnarray*}
\frac1{\wz\phi(\wz{P})}\lf\|\lf\{\sum_{j=(j_{\wz{P}}\vee0)}^\fz
\sum_{\gfz{\wz{Q}\in\cq^\ast(\rr^{n-1})}{\ell(\wz{Q})=2^{-j}}}
\lf[2^{j(\beta(\wz{\cdot},0))}|\lz_{\wz{Q}}^{(i)}||\wz{Q}|^{-\frac12}\chi_{\wz{Q}}
\r]^{p(\wz{\cdot},0)}\r\}^{\frac1{p(\wz{\cdot},0)}}\r\|_{L^{p(\wz{\cdot},0)}(\wz{P})}
\end{eqnarray*}
is finite. By $\|f\|_{F_{p(\cdot),p(\cdot)}^{s(\cdot),\phi}(\rn)}= 1$
and \eqref{trace-y},
we see that there exists a positive constant $C_0$ such that,
for all $P\in\cq(\rn)$,
\begin{eqnarray*}
\frac1{\phi(P)}\lf\|\lf\{\sum_{j=(j_P\vee0)}^\fz
\sum_{\gfz{Q\in\cq^\ast(\rn)}{\ell(Q)=2^{-j}}}
\lf[2^{js(\cdot)}|t_Q||Q|^{-\frac12}\chi_Q\r]^{p(\cdot)}\r\}^{\frac1{p(\cdot)}}
\r\|_{L^{p(\cdot)}(P)}\le C_0,
\end{eqnarray*}
which, together with Remark \ref{r-vlpp}(ii), implies that,
for all $P\in\cq(\rn)$,
\begin{eqnarray}\label{trace5}
\int_\rn\lf\{\sum_{j=(j_P\vee0)}^\fz
\sum_{\gfz{Q\in\cq^\ast(\rn)}{\ell(Q)=2^{-j}}}
\lf[2^{js(\cdot)}|t_Q||Q|^{-\frac12}\frac{\chi_P}{C_0\phi(P)}
\chi_Q\r]^{p(\cdot)}\r\}\,dx\le1.
\end{eqnarray}
On the other hand, for all dyadic cube $\wz{P}\in\cq(\rr^{n-1})$,
we have
\begin{eqnarray*}
{\rm I}(\wz{P}):=&&\int_{\rr^{n-1}}\sum_{j=(j_{\wz{P}}\vee 0)}^\fz
\sum_{\gfz{\wz{Q}\in\cq(\rr^{n-1})}{\ell(\wz{Q})=2^{-j}}}
\lf[2^{j\beta(\wz{x},0)}\frac{|\lz_{\wz{Q}}^{(i)}|}{|\wz{Q}|^{\frac12}}
\frac{\chi_{\wz{P}}(\wz{x})\chi_{\wz{Q}}(\wz{x})}{C_0\wz\phi(\wz{P})}
\r]^{p(\wz{x},0)}\,d\wz{x}\\
=&&\sum_{j=(j_{\wz{P}}\vee 0)}^\fz
\sum_{\gfz{\wz{Q}\in\cq(\rr^{n-1})}{\ell(\wz{Q})=2^{-j}}}
2^{-j}\int_{\wz{Q}}\lf[2^{js(\wz{x},0)}|\lz_{\wz{Q}}^{(i)}||\wz{Q}|^{-\frac12}\frac{\chi_{\wz{P}}(\wz{x})}
{C_0\wz \phi(\wz{P})}\r]^{p(\wz{x},0)}\,d\wz{x}\\
\sim&&\sum_{j=(j_{\wz{P}}\vee 0)}^\fz
\sum_{\gfz{\wz{Q}\in\cq(\rr^{n-1})}{\ell(\wz{Q})=2^{-j}}}
\int_{\wh{\wz{Q}}_i}
\lf[2^{js(\wz{x},0)}\frac{|\lz_{\wz{Q}}^{(i)}|}{|\wz{Q}|^{\frac12}}\frac{\chi_{\wz{P}}(\wz{x})}
{C_0\wz \phi(\wz{P})}
\r]^{p(\wz{x},0)}\,d\wz{x}dx_n\\
\ls&&\sum_{j=(j_{\wz{P}}\vee 0)}^\fz
\int_{2(\wz{P}\times[0,\ell(\wz{P})))}
\sum_{\gfz{\wz{Q}\in\cq(\rr^{n-1})}{\ell(\wz{Q})=2^{-j}}}
\lf[2^{js(\wz{x},0)}|\lz_{\wz{Q}}^{(i)}||\wz{Q}|^{-\frac12}\r.\\
&&\times\lf.\lf[C_0\wz \phi(\wz{P})\r]^{-1}
\chi_{\wh{\wz{Q}}_i}(\wz{x},x_n)\r]^{p(\wz{x},0)}\,d\wz{x}dx_n,
\end{eqnarray*}
where
$$\wh{\wz{Q}}_i:=\wz{Q}\times\lf[\frac{(2i-1)\ell(\wz{Q})}{2},i\ell(\wz{Q})\r),$$
which, combined with the fact that
$\{\wh{\wz{Q}}_i\}_{\wz{Q}\in\cq^\ast(\rr^{n-1})}$
are disjoint each other, \eqref{trace5} and the condition \textbf{(S1)}
of $\phi$, implies that, for all
$\wz{P}\in\cq(\rr^{n-1})$,
\begin{eqnarray*}
{\rm I}(\wz{P})
&\ls&\int_{2(\wz{P}\times[0,\ell(\wz{P}))}
\lf\{\sum_{j=(j_{\wz{P}}\vee 0)}^\fz
\lf[\sum_{\gfz{\wz{Q}\in\cq(\rr^{n-1})}{\ell(\wz{Q})=2^{-j}}}
2^{js(\wz{x},0)}|\lz_{\wz{Q}}^{(i)}||\wz{Q}|^{-\frac12}\r.\r.\\
&&\hs\hs\hs\times\lf.\frac{1}{C_0\wz \phi(\wz{P})}
\chi_{\wz{Q}\times[\frac{(2i-1)\ell(\wz{Q})}{2},i\ell(\wz{Q}))}
\Bigg]^{p(x)}\r\}^{\frac{p(\wz{x},0)}{p(x)}}\,d\wz{x}dx_n\\
&\ls&\int_{2(\wz{P}\times[0,\ell(\wz{P}))}
\lf\{\sum_{j=(j_{\wz{P}}\vee 0)}^\fz
\lf[\sum_{\gfz{\wz{Q}\in\cq(\rr^{n-1})}{\ell(\wz{Q})=2^{-j}}}
2^{js(x)}|t_{\wz{Q}\times[(i-1)\ell(\wz{Q}),i\ell(\wz{Q}))}|\r.\r.\\
&&\hs\hs\hs\times\lf.\frac{|\ell(\wz{Q})|^{-\frac n2}}
{C_0\phi(\wz{P}\times[0,\ell(\wz{P})))}
\chi_{\wz{Q}\times[\frac{(2i-1)\ell(\wz{Q})}{2},i\ell(\wz{Q}))}
\Bigg]^{p(x)}\r\}\,dx\ls1,
\end{eqnarray*}
where we used the fact that $p(\wz{x},0)=p(\wz{x},x_n)$ for all $(\wz{x},x_n)\in\rn$ with
$|x_n|\le2$ in the last inequality.
By this and Remark \ref{r-vlpp}(ii), we conclude that,
for all $\wz{P}\in\cq(\rr^{n-1})$,
\begin{eqnarray*}
&&\frac1{\wz \phi(\wz{P})}\lf\|\lf\{
\sum_{j=(j_{\wz{P}}\vee 0)}^\fz\lf[\sum_{\gfz{\wz{Q}
\in\cq^\ast(\rr^{n-1})}{\ell(\wz{Q})=2^{-j}}}
2^{j\beta(\wz{\cdot},0)}
|\lz_{\wz{Q}}^{(i)}||\wz{Q}|^{-\frac12}\chi_{\wz{Q}}\r]^{p(\wz{\cdot},0)}\r\}
^{\frac1{p(\wz{\cdot},0)}}\r\|_{L^{p(\wz{\cdot},0)}(\wz{P})}
\end{eqnarray*}
is finite, which implies that
$
\|\lz^{(i)}\|_{f_{p(\wz{\cdot},0)}^{\beta(\wz{\cdot},0),\wz \phi}(\rr^{n-1})}
\ls1,
$
namely, \eqref{trace6} holds true. Therefore,
$$\|\mathop\mathrm{Tr}(f)\|_{F_{p(\wz{\cdot},0)}^{\beta(\wz{\cdot},0),\wz \phi}(\rr^{n-1})}
\ls\sum_{i=0}^2\|\lz^{(i)}\|_{f_{p(\wz{\cdot},0)}^{\beta(\wz{\cdot},0),\wz \phi}(\rr^{n-1})}
\ls\|f\|_{F_{p(\cdot),p(\cdot)}^{s(\cdot),\phi}(\rn)}.$$

\emph{Step 2)} We prove that the operator Tr is surjective.
Let $f\in F_{p(\wz{\cdot},0)}^{\beta(\wz{\cdot},0),
\wz \phi}(\rr^{n-1})$. Then,
by Theorem \ref{t-md}, we find that there exist a sequence
$$\lz:=\{\lz_{\wz{Q}}\}_{\wz{Q}\in\cq^\ast(\rr^{n-1})}\subset\cc$$
and a sequence $\{a_{\wz{Q}}\}_{\wz{Q}\in\cq^\ast(\rr^{n-1})}$
of ($K,\,L$)-smooth atoms of $F_{p(\wz{\cdot},0)}^{\beta(\wz{\cdot},0),\wz \phi}(\rr^{n-1})$
with $K$ and $L$ satisfying \eqref{4.9x}
such that
$f=\sum_{\wz{Q}\in\cq^\ast(\rr^{n-1})}\lz_{\wz{Q}}a_{\wz{Q}}$
converges in $\cs'(\rr^{n-1})$ and
\begin{equation}\label{trace-z}
\|\lz\|_{f_{p(\wz{\cdot},0)}^{\beta(\wz{\cdot},0),\wz \phi}(\rr^{n-1})}
\ls\|f\|_{F_{p(\wz{\cdot},0)}^{\beta(\wz{\cdot},0),\wz \phi}(\rr^{n-1})};
\end{equation}
moreover, for all $\wz{P}\in\cq(\rr^{n-1})$,
\begin{eqnarray}\label{trace7}
\frac1{\phi(\wz{P})}
\int_{\wz{P}}\sum_{j=(j_{\wz{P}}\vee0)}^\fz\sum_{\gfz{\wz{Q}\in\cq^\ast(\rr^{n-1})}
{\gfz{\wz{Q}\subset \wz{P}}{\ell(\wz{Q})=2^{-j}}}}
\lf[\frac{2^{j\beta(\wz{x},0)}
|\lz_{\wz{Q}}|\chi_{\wz{Q}}(\wz{x})}
{|\wz{Q}|^{\frac12}\|\lz\|_{f_{p(\wz{\cdot},0)}^{\beta(\wz{\cdot},0)
,\wz \phi}(\rr^{n-1})}}\r]^{p(\wz{x},0)}\,d\wz{x}\ls1.
\end{eqnarray}
Let $\eta\in C_c^\fz(\rr)$ satisfy ${\rm supp}\,\eta\subset(-\frac12,\frac12)$
and
$\eta(0)=1$. For all $\wz{Q}\in\cq(\rr^{n-1})$ and $\xi\in\rr$, let
$\eta_{\wz{Q}}(\xi):=\eta(2^{-\log_2\ell(\wz{Q})}\xi)$,
$$g:=\sum_{\wz{Q}\in\cq^\ast(\rr^{n-1})}\lz_{\wz{Q}}a_{\wz{Q}}\otimes\eta_{\wz{Q}}
=:\sum_{Q\in\cq^\ast(\rn)}t_Qb_Q,$$
where, for all $Q\in\cq^\ast(\rn)$ and $x:=(\wz{x},x_n)\in\rn$,
$$b_Q(x):=[\ell(\wz{Q})]^{-\frac12}a_{\wz{Q}}\otimes\eta_{\wz{Q}}(x)
:=[\ell(\wz{Q})]^{-\frac12}a_{\wz{Q}}(\wz{x})\eta_{\wz{Q}}(x_n),$$
$t_Q:=[\ell(\wz{Q})]^{1/2}\lz_{\wz{Q}}$ if $Q=\wz{Q}\times [0,\ell(\wz{Q}))$ and $t_Q:=0$
otherwise.

Next we show that $g$ converges in $\cs'(\rn)$ and
$$\|g\|_{F_{p(\cdot),p(\cdot)}^{s(\cdot),\phi}(\rn)}
\ls\|f\|_{F_{p(\wz{\cdot},0)}^{\beta(\wz{\cdot},0),\wz \phi}(\rr^{n-1})}.$$
It is easy to show that each $b_Q$
is a ($K,\,L$)-smooth atom supported near $Q$ of
$F_{p(\cdot),p(\cdot)}^{s(\cdot),\phi}(\rn)$ with $K$ and $L$
as in \eqref{4.9x}.
By Proposition \ref{p-ec1} and the fact that
$\{\wz{Q}\times[\frac12\ell(\wz{Q}),\ell(\wz{Q}))\}_{\wz{Q}\in\cq^\ast(\rr^{n-1})}$ are
disjoint each other, we find that
\begin{eqnarray}\label{trace-x}
&&\|\{t_Q\}_{Q\in\cq^\ast(\rn)}\|
_{f_{p(\cdot),p(\cdot)}^{s(\cdot),\phi}(\rn)}\noz\\
&&\hs=\sup_{P\in\cq(\rn)}\frac1{\phi(P)}
\lf\|\lf\{\sum_{j=(j_P\vee0)}^\fz\lf[\sum_{\wz{Q}\in\Theta_j}2^{js(\cdot)}
|\lz_{\wz{Q}}|[\ell(\wz{Q})]^{1/2}\r.\r.\r.\noz\\
&&\hs\hs\hs\times\lf.\lf.|\wz{Q}\times[0,\ell(\wz{Q}))|^{-1/2}
\chi_{\wz{Q}\times[0,\ell(\wz{Q}))}\Bigg]^{p(\cdot)}
\r\}^{1/{p(\cdot)}}\r\|_{L^{p(\cdot)}(P)}\noz\\
&&\hs\sim\sup_{P\in\cq(\rn)}\frac1{\phi(P)}
\lf\|\sum_{j=(j_P\vee0)}^\fz\sum_{\wz{Q}\in \Theta_j}2^{js(\cdot)}
|\lz_{\wz{Q}}||\wz{Q}|^{-\frac12}\chi_{\wz{Q}\times[\frac12\ell(\wz{Q}),\ell(\wz{Q}))}
\r\|_{L^{p(\cdot)}(P)}\noz\\
&&\hs\sim\sup_{\wz{P}\in\cq(\rr^{n-1})}
\frac1{\wz\phi(\wz{P})}\lf\|\sum_{\gfz{\wz{Q}\in\cq^\ast(\rr^{n-1})}
{\wz{Q}\subset \wz{P}}}|\wz{Q}|^{-\frac{s(\cdot)}{n-1}}|\lz_{\wz{Q}}|\r.\noz\\
&&\hs\hs\hs\times|\wz{Q}|^{-1/2}
\chi_{\wz{Q}\times[\frac12\ell(\wz{Q}),\ell(\wz{Q}))}
\Bigg\|_{L^{p(\cdot)}(\wz{P}\times[0,\ell(\wz{P})))},
\end{eqnarray}
where $\Theta_j:=\{\wz{Q}\in\cq^\ast(\rr^{n-1}):\
\wz{Q}\times[0,\ell(\wz{Q}))\subset P,\ \ell(\wz{Q})=2^{-j}\}$.
On the other hand, let
$\Gamma:=\|\lz\|_{f_{p(\wz{\cdot},0)}^{\beta(\wz{\cdot},0),\wz \phi}(\rr^{n-1})}$.
Then, for all $\wz{P}\in\cq(\rr^{n-1})$, by \eqref{trace7}, we find that
\begin{eqnarray*}
&&\frac1{\wz\phi(\wz{P})}\int_{\wz{P}\times[0,\ell(\wz{P}))}\lf\{
\sum_{\gfz{\wz{Q}\in\cq^\ast(\rr^{n-1})}
{\wz{Q}\subset \wz{P}}}
|\lz_{\wz{Q}}||\wz{Q}|^{-\frac12}\Gamma^{-1}
|\wz{Q}|^{-\frac{s(x)}{n-1}}\chi_{\wz{Q}\times[\frac12\ell(\wz{Q}),\ell(\wz{Q}))}
\r\}^{p(x)}\,dx\\
&&\hs\sim\frac1{\wz\phi(\wz{P})}
\sum_{\gfz{\wz{Q}\in\cq^\ast(\rr^{n-1})}
{\wz{Q}\subset \wz{P}}}\int_{\wz{Q}\times[\frac12\ell(\wz{Q}),\ell(\wz{Q}))}
\lf[|\wz{Q}|^{-\frac{s(x)}{n-1}-\frac12}|\lz_{\wz{Q}}|\Gamma^{-1}\r]^{p(x)}\,dx\\
&&\hs\sim\frac1{\wz\phi(\wz{P})}
\sum_{j=(j_{\wz{P}}\vee0)}^\fz\sum_{\gfz{\wz{Q}\in\cq^\ast(\rr^{n-1})}
{\gfz{\wz{Q}\subset \wz{P}}{\ell(\wz{Q})=2^{-j}}}}
\int_{\wz{Q}}\lf[2^{j\beta(\wz{x},0)}
|\lz_{\wz{Q}}||\wz{Q}|^{-\frac12}\Gamma^{-1}\r]^{p(\wz{x},0)}\,d\wz{x}\ls1,
\end{eqnarray*}
which, together with Remark \ref{r-vlpp}(ii), implies that
\begin{eqnarray*}
&&\frac1{\wz\phi(\wz{P})}\lf\|\sum_{\gfz{\wz{Q}\in\cq^\ast(\rr^{n-1})}
{\wz{Q}\subset \wz{P}}}|\wz{Q}|^{-\frac{s(\cdot)}{n-1}}|\lz_{\wz{Q}}||\wz{Q}|^{-\frac12}
\chi_{\wz{Q}\times[\frac12\ell(\wz{Q}),\ell(\wz{Q}))}
\r\|_{L^{p(\cdot)}(\wz{P}\times[0,\ell(\wz{P})))}\\
&&\hs\ls\|\lz\|_{f_{p(\wz{\cdot},0)}^{\beta(\wz{\cdot},0),\wz \phi}(\rr^{n-1})}.
\end{eqnarray*}
Form this and \eqref{trace-x}, we further deduce that
$$\lf\|\lf\{t_Q\r\}_{Q\in\cq^\ast(\rn)}
\r\|_{f_{p(\cdot),p(\cdot)}^{s(\cdot),\phi}(\rn)}
\ls\|\lz\|_{f_{p(\wz{\cdot},0)}^{\beta(\wz{\cdot},0),\wz \phi}(\rr^{n-1})}.$$
Therefore, by Theorem \ref{t-md}(i) and \eqref{trace-z}, we conclude that
$g=\sum_{Q\in\cq^\ast(\rn)}t_Qb_Q$
converges in $\cs'(\rr^{n})$,
$g\in F_{p(\cdot),p(\cdot)}^{s(\cdot),\phi}(\rn)$ and
$$\|g\|_{F_{p(\cdot),p(\cdot)}^{s(\cdot),\phi}(\rn)}
\ls\|f\|_{F_{p(\wz{\cdot},0)}^{\beta(\wz{\cdot},0),\wz \phi}(\rr^{n-1})};$$
furthermore, $\mathop\mathrm{Tr}(g)=f$ in $\cs'(\nrr)$, which implies that
$$\mathop\mathrm{Tr}:\ F_{p(\cdot),p(\cdot)}^{s(\cdot),\phi}(\rn)
 \rightarrow F_{p(\wz{\cdot},0),p(\wz{\cdot},0)}^{s(\wz{\cdot},0),
\wz \phi}(\rr^{n-1})$$
is surjective and hence completes the proof of Theorem \ref{t-trace1}.
\end{proof}


{\bf Acknowledgement.} The authors would like to express their
deep thanks to referees for their careful reading and many useful
comments which improve the presentation of this article.
This project is supported by the National
Natural Science Foundation of China
(Grant Nos.~11171027, 11361020 \& 11471042),
the Specialized Research Fund for the Doctoral Program of Higher Education
of China (Grant No. 20120003110003) and the Fundamental Research
Funds for Central Universities of China (Grant Nos.~2012LYB26,
2012CXQT09, 2013YB60 and 2014KJJCA10).

\bibliographystyle{amsplain}

\end{document}